\renewcommand{\theequation}{\thesection\arabic{equation}}
\newtheorem{theorem}{Theorem}
\newtheorem{lemma}{Lemma}
\newtheorem{corollary}{Corollary}
\theoremstyle{definition}
\newtheorem{remark}{Remark}
\newtheorem{condition}{Condition}
\newtheorem{algorithm}{Algorithm}
\begin{document}


\renewcommand{\baselinestretch}{2}


\markboth{\hfill{\footnotesize\rm José Kling AND Mathias Vetter} \hfill}
{\hfill {\footnotesize\rm On goodness-of-fit testing for self-exciting point processes} \hfill}

\renewcommand{\thefootnote}{}
$\ $\par


\fontsize{12}{14pt plus.8pt minus .6pt}\selectfont \vspace{0.8pc}
\centerline{\large\bf On goodness-of-fit testing for self-exciting point processes}
\vspace{.4cm}
\centerline{José C. F. Kling\(^{1,2}\) and Mathias Vetter\(^1\)}
\vspace{.4cm}
\centerline{\it\(^1\) Christian-Albrechts-Universit\"at zu Kiel, Mathematisches Seminar}
\centerline{\it\(^2\) GEOMAR Helmholtz Centre for Ocean Research}
\vspace{.55cm} \fontsize{9}{11.5pt plus.8pt minus.6pt}\selectfont


\begin{quotation}
    Despite the wide usage of parametric point processes in theory and applications, a sound goodness-of-fit procedure to test whether a given parametric model is appropriate for data coming from a self-exciting point processes has been missing in the literature. In this work, we establish a bootstrap-based goodness-of-fit test which empirically works for all kinds of self-exciting point processes (and even beyond). In an infill-asymptotic setting we also prove its asymptotic consistency, albeit only in the particular case that the underlying point process is inhomogeneous Poisson.

    \vspace{9pt}
    \noindent {\it Key words and phrases:} Goodness-of-fit test; Hawkes process; Infill asymptotics; Poisson process; parametric bootstrap

    \par
\end{quotation}\par

\def\thefigure{\arabic{figure}}
\def\thetable{\arabic{table}}

\renewcommand{\theequation}{\thesection.\arabic{equation}}

\fontsize{12}{14pt plus.8pt minus .6pt}\selectfont

\section{Introduction}

Self-exciting point processes constitute a rich class of models to describe the evolution of real world events over time. Among the most prominent models in practice are standard Poisson processes (having a constant intensity function), inhomogeneous Poisson processes (those with a deterministic but time-varying intensity process) and Hawkes processes (self-exciting point processes with a conditional intensity which depends on past observations). Such processes are used in a variety of fields such as geology (see e.g.\ \cite{ogat1978} for an early use of Hawkes process models in seismology, but also \cite{dioetal2023} for recent work on inhomogeneous Poisson processes in volcanology), neuroscience (see e.g.\ \cite{burk2006} for neuron models based on inhomogeneous Poisson processes), finance (see \cite{bauhau2009} for an overview on the use of point processes in finance) and telecommunication (see e.g.\ \cite{pinetal2015} and \cite{rizetal2017} for the use of Hawkes processes in social media), among many others.

From a statistical point of view, the situation is usually as follows: One has data coming from a specific class of point processes, i.e.\ observation times $0 < t_1 < t_2 < \ldots$ over some compact interval $[0,T]$, which correspond to the event times of the underlying point process $N$. Statistical procedures then usually involve the estimation of an unknown parameter in the underlying model. For self-exciting point processes it is well known that the distribution of $N$ is uniquely characterized by its conditional intensity process $\la(t)$, i.e.\ the process making \(N(t) - \int_0^t \la(s) ds\) a zero mean locally square integrable martingale with respect to the natural filtration. Hence, parametric models are usually of the form $\la(t) = \nu(t,\te) \text{ for some } \te \in \Te$, where $\Te$ is a finite-dimensional parameter space and $t \mapsto \nu(t,\te)$ in general needs to be a suitably measurable stochastic process indexed by $\te$. The prime interest of the statistician then is to estimate the unknown true parameter $\te_0$.

For self-exciting point processes, but also for certain models based on renewal processes, this is relatively easy as the likelihood function often is directly accessible. Therefore, e.g.\ \cite{ogat1978}, \cite{vere1982} and \cite{choetal1988} were able to explicitly compute maximum likelihood estimators for certain point process models and to prove consistency as well as asymptotic normality, albeit mostly in very restrictive situations. It was only recently that \cite{chehal2013} proved asymptotic normality for the maximum likelihood estimator $\hat \te$ within a large class of parametric models involving self-exciting point processes.

What is lacking, however, is a consistent goodness-of-fit test for parametric point process models. This means, the goal is not to estimate the unknown parameter $\te_0$ or to test certain properties of it, but rather to check whether the proposed parametric model provides a reasonable fit for the observed event times. Formally, what is missing is a sound statistical procedure which tests the null hypothesis that $\la(t) = \nu(t,\te) \text{ for some } \te \in \Te$ against the alternative that there is no $\te \in \Te$ such that $\nu(t,\te)$ fully describes the intensity process.

For independent and identically distributed data, the problem of testing a given parametric model is well-known and dates back to e.g.\ \cite{darl1955}, \cite{kacetal1955} and \cite{lill1967}. The main strategy is always to first estimate the unknown parameter and to then take a suitable distance between two estimators for the unknown distribution function. The first is the parametric distribution function under the model with the unknown parameter replaced by the estimated one, and the second is a non-parametric estimator such as the empirical distribution function. The problem with these approaches, however, is that the (explicit or asymptotic) distribution of this distance is rarely known or depends on the unknown parameter in a complicated way \citep{durb1973}, so that one needs to rely on simulation methods in order to obtain asymptotic quantiles for a valid goodness-of-fit test. Two convincing solutions have been provided somewhat recently by \cite{jograo2004} and \cite{meiswa2007} who have proven under very general assumptions that a bootstrap-based method leads to asymptotically consistent tests.

For simple point processes, however, the picture is much less clear. Both \cite{ogat1988} and \cite{chehal2013} mention that a formal goodness-of-fit test for these models is missing, and they propose the use of an ad-hoc method based on the fact that, if its conditional intensity function is known, then a self-exciting point process can be transformed into a standard Poisson one. To be precise, given observation times $0=t_0 < t_1 < t_2 < \ldots$ and the associated cumulative intensity process \(\La(t) = \int_0^t \la(s) ds\) it is known that the process $0=s_0 < s_1 < s_2 < \ldots$ given by $s_i = \La(t_i)$ is equal in distribution to a standard Poisson process. In particular, the increments $s_i - s_{i-1}$, $i \ge 1$, equal independent standard exponential random variables.

This intuition is used to come up with the afore-mentioned ad-hoc method for goodness-of-fit testing. Under the null hypothesis there exists an unknown true parameter $\te_0$ giving both the true intensity process $\la(t,\te_0)$ as well as the true cumulative intensity process $\La(t, \te_0)$. Clearly, the above time transformation is infeasible as $\te_0$ is unknown, but we can replace $\te_0$ by its consistent estimator $\hat \te$ and in turn obtain an estimator $\hat \La(t)$ as well as event times given by $\hat s_i = \hat \La(t_i)$. The proposed goodness-of-fit tests are then based on testing the null hypothesis whether the increments $\hat s_i - \hat s_{i-1}$, $i \ge 1$, behave like independent standard exponential random variables. This can be done via standard procedures. (A similar approach is proposed for renewal processes; see \cite{beblai1996} and \cite{bebb2013}.)

The problem, however, is the same as historically for i.i.d.\ observations, namely that these procedures do not account for the fact that the parameter (and, hence, the cumulative intensity function) needs to be estimated. Under the null, they treat $\hat s_i - \hat s_{i-1}$ as being i.i.d.\ standard exponential distributed which obviously is not the case. Our main goal in this work therefore is to overcome this issue and to provide a convincing solution to the problem of goodness-of-fit testing for self-exciting point processes. In the spirit of \cite{jograo2004} and \cite{meiswa2007} we establish a novel test which utilizes recent bootstrap techniques for point processes \citep{cavetal2023}. We also provide a thorough asymptotic theory which proves that this new methodology yields a consistent test, albeit only for inhomogeneous Poisson processes.

The paper is organised as follows: In Section \ref{sec:setting} we introduce the formal setting of this work and explain the main idea behind our bootstrap-based goodness-of-fit test. Section \ref{sec:results} contains the statement of the main results of this work, which are the asymptotic consistency of the bootstrap-based test as well as a generalisation of the key theorems in \cite{chehal2013}, and we also discuss the various conditions needed to prove these claims. Sections \ref{sec:simul} and \ref{sec:emp} deal with a thorough simulation study as well as some empirical applications. Finally, all proofs are gathered in Section \ref{sec:proof}.

\section{Setting} \label{sec:setting}

In the following we will work with the setting from \cite{chehal2013} which is specifically tailored to allow for a formal asymptotic theory. If one thinks of the still quite simple class of inhomogeneous Poisson processes, it is clear that in order to check whether a (parametric) model is appropriate or not one needs to potentially have infinitely many observations around every time point $t$ as it is otherwise not obvious to check whether the proposed function $\la$ is reasonable at $t$ or not. A similar reasoning applies to processes with self-exciting components as well. The solution of \cite{chehal2013} to this problem is to work with infill asymptotics; e.g., to assume that, at stage $n$, we are given observations $0=t_0^n < t_1^n < t_2^n < \ldots$ associated with a self-exciting point process $N_n$ over a compact interval, which we choose to be $[0,1]$ from now on. The corresponding intensity process is then given by \(\la_n(t) = a_n \mu(t) + \int_{[0,t)} g(t-s) dN_n(s)\), where $a_n \to \infty$ denotes a sequence of known positive constants and $\mu$ and $g$ are unknown functions (satisfying additional conditions) which represent the deterministic part and the self-exciting part of the intensity process, respectively. Under the null hypothesis the above sequence of intensity processes belongs to a parametric family, i.e.\ it is assumed that there exists some parameter space $\Te$ such that
\begin{equation}
    H_0: \la_n(t) = \la_n(t,\te)
    = a_n \mu(t, \te) + \int_{[0,t)} g(t-s,\te) dN_n(s) \text{ for some } \te \in \Te
\end{equation}
holds true.

Loosely speaking, under the null hypothesis the intensity process (and, hence, the point process itself) follows a shape given by the baseline function $\mu(\cdot, \te_0)$ and the excitation function $g(\cdot,\te_0)$ while the factor $a_n$ allows for a growing number of events as $n \to \infty$ and thus makes an asymptotic theory from observations over $[0,1]$ possible. In \cite{chehal2013} this model is used for asymptotic inference on the true parameter $\te_0$, i.e.\ if $H_0$ holds and $\te_0$ is the true parameter then the authors prove under relatively mild conditions that the maximum likelihood estimator $\hat \te_n$ is consistent for $\te_0$ and asymptotically normal.

Our aim is to provide a consistent goodness-of-fit test for which we utilize the fact that the point process given by $0=s_0^n < s_1^n < s_2^n < \ldots$ with $s_i^n = \La_n(t_i^n,\te_0)$ is equal in distribution to a standard Poisson process. Here, $\La_n(t,\te_0)$ denotes the cumulative intensity process associated with $\la_n(t,\te_0)$. As noted in the introduction, a natural idea is to replace the unknown process $\La_n(t,\te_0)$ by an estimated version $\La_n(t,\hat \te_n)$. This estimated process $\La_n(t,\hat \te_n)$ can then be interpreted as a near optimal fit to the unknown process $\La_n(t,\te_0)$ within the parametric model, and under the null hypothesis the estimated increments $\hat s_i^n - \hat s_{i-1}^n$, $i=1, \ldots, N_n(1)$, based on the transform $\hat s_i^n = \La_n(t_i^n, \hat \te_n)$, should be reasonably close to independent standard exponential random variables.

There exist many tests to check whether a given set of observations was generated by a specific distribution, and in this work we will discuss a version based on empirical Laplace transforms, although many other statistics (including more standard ones like the Kolmogorov-Smirnov approach) would, in principle, work as well. Critical values for the resulting test statistic will then be obtained from a bootstrap procedure explained in detail below. Its validity (i.e.\ consistency under the null hypothesis) will be proven, albeit only for inhomogeneous Poisson processes. For notational convenience we will often drop the index $n$ and simply write e.g.\ $t_i$ instead of $t_i^n$.

In the following, let \(k_n(u,\te,r,s) = \exp\left(-u \int_r^s \la_n(t,\te) dt \right)\), with \(u \ge 0\), and set
\begin{equation*}
    \begin{split}
        L_n(u,\hat \te_n) &= \frac 1{N_n(1)} \sum_{i \ge 1} k_n(u, \hat \te_n, t_{i-1}, t_i) 1_{\{t_i \le 1\}} \\ &= \frac 1{N_n(1)} \sum_{i \ge 1} \exp\left(-u (\La_n(t_i,\hat \te_n) - \La_n(t_{i-1}, \hat \te_n))  \right) 1_{\{t_i \le 1\}}
    \end{split}
\end{equation*}
where $\hat \te_n$ denotes the maximum likelihood estimator for the true parameter $\te_0$ under $H_0$ as discussed in \cite{chehal2013}. By construction, $L_n(u,\hat \te_n)$ plays the role of an empirical Laplace transform based on the estimated increments $\hat s_i^n - \hat s_{i-1}^n$, $i=1, \ldots, N_n(1)$. Hence, under the null hypothesis it should be close to the Laplace transform $L(u) = (1+u)^{-1}$ of a standard exponential variable. As the (random) number of observations $N_n(1)$ is proportional to $a_n$, a reasonable test statistic for $H_0$ is given by \(\sqrt{a_n} \vert \vert G_n \vert \vert\), where $G_n(u) = L_n(u,\hat \te_n) - L(u)$, and \(\vert \vert h \vert \vert^2 = \int_0^\infty h^2(u) \be(u) du\), where \(\be\) is a positive and integrable weight function. Note that
\begin{equation} \label{defHilb}
    \hH = \left\{h: [0,\infty) \to \R ~ \middle\vert~ h \text{ is continuous with } \int_0^\infty h^2(u) \be(u) du < \infty\right\}
\end{equation}
defines a separable Hilbert space with the usual scalar product, \(\langle h_1, h_2 \rangle = \int_0^\infty h_1(u) h_2(u) \be(u) du\), and the associated norm as above.

Recall that $L_n(u,\hat \te_n)$ does not equal the empirical Laplace transform of i.i.d.\ standard exponential variables even under the null hypothesis, as the true cumulative intensity process $\La_n(t,\te_0)$ is unknown and needs to be estimated based on $\hat \te_n$. Hence, asymptotic critical values for $\sqrt{a_n} \vert \vert G_n \vert \vert$ are unknown, and in the spirit of \cite{jograo2004} and \cite{meiswa2007} we propose a bootstrap procedure to estimate these critical values. This procedure mimics how $\sqrt{a_n} \vert \vert G_n \vert \vert$ is obtained and accounts for the fact that the distinction between the true unknown parameter yielding the observations and the estimated parameter used to calculate the time transformation needs to be taken into account.

Precisely, we propose the following strategy: For all $b=1, \ldots, B$, simulate a point process  $\{t_i^{*,b} ~|~ i=0,1,\ldots \}$ according to the intensity process $\la_n(t,\hat \te_n)$ and the corresponding cumulative intensity process $\La_n(t, \hat \te_n)$, for which both variants of the proposed bootstrap strategy in \cite{cavetal2023} can be used. Note specifically that, in the case of inhomogeneous Poisson processes, the simulation mimics the time transformation from above, i.e.\ one simulates a standard Poisson process $0=s_0^{*,b} < s_1^{*,b} < s_2^{*,b} < \ldots$ and then transforms the event times to $t_i^{*,b} = \La_n^{-1}(s_i^{*,b}, \hat \te_n)$. Based on the simulated point process $\{t_i^{*,b} ~|~ i=0,1,\ldots \}$ over $[0,1]$ one then reproduces how the original test statistic is obtained. First, one estimates $\hat \te_n^{*,b}$ from $\{t_i^{*,b} ~|~ i=0,1,\ldots \}$ via maximum likelihood and computes $\hat s_i^{*,b} = \La_n(t_i^{*,b}, \hat \te_n^{*,b})$. Then
\begin{equation*}
    L_n(u,\hat \te^{*,b}) = \frac 1{N_n^{*,b}(1)}  \sum_{i \ge 1} \exp\left(-u (\La_n(t_i^{*,b}, \hat \te_n^{*,b}) - \La_n(t_{i-1}^{*,b},\hat \te_n^{*,b}))  \right) 1_{\{t_i^{*,b} \le 1\}}
\end{equation*}
and \(G_n^{*,b}(u) = L_n(u,\hat \te^{*,b}) - L(u)\) are computed in exactly the same way as $L_n(u,\hat \te_n)$ and $G_n(u)$. Finally, one uses the empirical quantiles of $\sqrt{a_n} \vert \vert G_n^{*,b} \vert \vert$, $b=1, \ldots, B$, as critical values which should approximate the unknown true critical values of $\sqrt{a_n} \vert \vert G_n \vert \vert$  as $B \to \infty$. Note that a feasible test statistic can be constructed easily without the knowledge of $a_n$. 

To summarize, the only difference regarding the generation of $\sqrt{a_n} \vert \vert G_n \vert \vert$ and the ones of $\sqrt{a_n} \vert \vert G_n^{*,b} \vert \vert$, $b=1, \ldots, B$, regards the distribution of the underlying point process. Under the null hypothesis the parameter is $\te_0$ for $\sqrt{a_n} \vert \vert G_n \vert \vert$, whereas the bootstrapped processes are generated according to $\hat \te_n$ (which estimates $\te_0$ consistently). Intuitively it is thus clear that this bootstrap strategy leads to asymptotically valid critical values if a certain continuity property of the distribution of $\sqrt{a_n} \vert \vert G_n \vert \vert$ with respect to the parameter $\te$ generating the original point process is satisfied. This will be made precise below.

\section{Results} \label{sec:results}

Proving consistency of the bootstrap procedure equals showing that the empirical quantiles obtained above converge (in a certain sense) to the unknown asymptotic quantiles of $\sqrt{a_n} \vert \vert G_n \vert \vert$ if the null hypothesis holds. Hence, from now on we assume that there exists $\te_0 \in \Te$ such that $\la_n(t)
= a_n \mu(t, \te_0) + \int_{[0,t)} g(t-s,\te_0) dN_n(s)$ holds true. A natural goal is to prove
\begin{equation} \label{convboot}
    \lim_{n \to \infty, B \to \infty} \P \left(\sqrt{a_n} \vert \vert G_n \vert \vert \ge (F_n^B)^{-1}(1-\al) \right) = \al
\end{equation}
for any $\al \in (0,1)$, where \(F_n^B(x) = \frac 1B \sum_{b=1}^B 1_{\{ \sqrt{a_n} \vert \vert G_n^{*,b} \vert \vert \le x\}}\), and $(F_n^B)^{-1}$ denotes its generalized inverse.

In order to deduce (\ref{convboot}) we will rely on previous results from \cite{buekoj2019} and \cite{vand1998}. Let us start with some additional notation: Given is a series of experiments $(\xX_n, \bB_n, \pP_n)$ with $\pP_n = \{P_{\te,n} ~|~ \te \in \Te \}$ for some parameter space $\Te \subset \R^d$ and we regard any $x_n \in \xX_n$ as a sample of the original point process. Associated with these samples is the maximum likelihood estimator $\hat \te_n = f_n(x_n)$ as well as $\sqrt{a_n} \vert \vert G_n \vert \vert = k_n(x_n)$ whose distribution we call $V_{\te,n}$. We will later work under conditions which grant consistency and asymptotic normality of the estimator $\hat \te_n$ for the unknown true parameter $\te_0$, invoking the results from \cite{chehal2013}. See in particular the more general Lemma \ref{lem1}(b) later.

Now, Lemma 4.2 in \cite{buekoj2019} states a sufficient condition to deduce (\ref{convboot}), and it boils down to checking two separate conditions. On one hand, one needs to show that their Condition 4.1 holds true, i.e.\ for any fixed $\te_0$ one needs weak convergence of $\sqrt{a_n} \vert \vert G_n \vert \vert$ towards a limiting random variable with a continuous distribution function. In other words, setting $d$ as the bounded Lipschitz metric which metrizes weak convergence, one has to show
\begin{equation}\label{Gnweak}
    \lim_{n\to \infty}d\left(V_{\te_0,n},V_{\te_0}\right) = 0 \quad \text{for all } \te_0 \in \Te,
\end{equation}
where $V_{\te_0}$ is a distribution on $\R$ with a continuous distribution function. The other condition is any of the equivalent statements in their Lemma 2.2, part (c) being the one used here. In accordance with the notation from above, a generic bootstrap sample is based on simulating according to $P_{\hat \te_n,n}$, and we obtain both $\hat \te_n^{*}$ and $\sqrt{a_n} \vert \vert G_n^{*} \vert \vert$ by applying the functions $f_n$ and $g_n$ to the bootstrap sample. We hence denote the conditional distribution of $\sqrt{a_n} \vert \vert G_n^{*} \vert \vert$, as it is based on the estimator $\hat \te_n$, by $V_{\hat \te_n,n}$, and Lemma 2.2(c) then becomes proving \(d\left(V_{\hat \te_n,n},V_{\te_0}\right) \stackrel{P_{\te_0,n}}{\longrightarrow} 0\) for all \(\te_0 \in \Te\).

Setting \(\Phi_{\te_n,n}(h) = d\left(V_{\te_0 + h/\sqrt{a_n},n},V_{\te_0}\right)\), an equivalent formulation is \(\Phi_{\te_n,n}(\sqrt{a_n}(\hat \te_n - \te_0))  \stackrel{P_{\te_0,n}}{\longrightarrow} 0\) for all \(\te_0 \in \Te\), and, in the spirit of Exercise 23.5 in \cite{vand1998}, a sufficient condition to deduce this convergence is to show
\begin{equation}\label{Gnweakcond2}
    \lim_{n \to \infty} d\left(V_{\te_0 + h_n/\sqrt{a_n},n},V_{\te_0}\right) = 0 \quad \text{for all } \te_0 \in \Te, \text{ and all } h_n \to h,
\end{equation}
or equivalently $\lim_{n \to \infty} \Phi_{\te_n,n}(h_n) = \Phi(h) \equiv 0$ for all $\te_0 \in \Te$ and all $h_n \to h.$ Indeed, utilizing the asymptotic normality $\lim_{n\to \infty} d(\sqrt{a_n}(\hat \te_n - \te_0),N_{\te_0}) = 0$ with $N_{\te_0} \sim \nN(0,\iI(\te_0)^{-1})$ (Theorem 2 in \cite{chehal2013}) and the extended continuous mapping theorem (Theorem 18.11 in \cite{vand1998}), for any $\te_0 \in \Te$ we then can first conclude the weak convergence of 
$\Phi_{\te_n,n}(\sqrt{a_n}(\hat \te_n - \te_0))$ towards the constant $\Phi(N_{\te_0}) = 0$ and then convergence to zero in $P_{\te_0,n}$-probability afterwards.

Hence, besides checking the conditions for an application of Theorem 2 in \cite{chehal2013}, what remains in order to prove (\ref{convboot}) is to show (\ref{Gnweak}) and (\ref{Gnweakcond2}). In fact, we will prove the stronger statement
\begin{equation} \label{Gnweakcond3}
    \lim_{n\to \infty}d\left(V_{\te_n,n},V_{\te_0}\right) = 0 \quad \text{for all } \te_0 \in \Te, \text{ and all } \te_n \to \te_0
\end{equation}
from which both assertions obviously follow.

Before we come to the main result of our work, we will give some conditions and state a few auxiliary claims which generalize some results from \cite{chehal2013} and might be of independent interest. Note that while the strategy of proving (\ref{Gnweakcond3}) works for all self-exciting processes, our main restriction will be that a full proof is only provided for inhomogeneous Poisson processes in the sense below.

\begin{condition} \label{condmain} Suppose the following conditions to hold:
    \begin{itemize}
        \item[(a)] $\Te \subset \R^d$ is a compact parameter space whose interior is connected and contains a $d$-dimensional open ball which contains the limiting (true) parameter $\te_0$.
        \item[(b)]  $a_n \to \infty$ is a known sequence, $\te_n \to \te_0$ denotes a sequence of unknown parameters, and for each $n$ a point process $0 = t_0^n < t_1^n < t_2^n < \ldots$ is observed over $[0,1]$ according to the intensity function \(\la_n(t) = \la_n(t,\te_n) = a_n \mu(t,\te_n)\) and the associated cumulative intensity function \(\La_n(t) = \La_n(t,\te_n) = \int_0^t a_n \mu(s,\te_n) ds\). We also set $C(t,\te_n) = \int_0^t \mu(s,\te_n) ds.$
        \item[(c)] The function $(t,\te) \mapsto \mu(t,\te)$ is uniformly bounded away from zero, i.e., \(\inf_{t \in [0,1], \te \in \Te} \vert \vert \mu(t,\te) \vert \vert > 0\), and it is two times continuously differentiable with uniformly bounded zeroth, first and second derivatives.
        \item[(d)] $\be: [0,\infty) \to [0,\infty)$ is a known integrable function and defines a Hilbert space $\hH$ as in (\ref{defHilb}).
    \end{itemize}
\end{condition}

The first lemma essentially generalizes results from \cite{chehal2013}. As mentioned before, we will work under conditions which grant consistency and asymptotic normality of $\hat \te_n$ as an estimator for $\te_0$ whenever the latter is the true unknown parameter. However, for a proof of the bootstrap procedure to work we need to discuss the slightly more general situation stated in Condition \ref{condmain}, namely the one where the true parameter at stage $n$ is $\te_n$, and these parameters do not have to equal $\te_0$ but only have to converge to it. In this more general case, which obviously contains the case of a constant parameter, we will need a version of asymptotic normality (and, hence, consistency) as well.

This result will be proved in the more general situation of truly self-exciting processes. Hence, we need a more comprehensive set of assumptions as well as additional notation.

\begin{condition}\label{condSEPP}
    Assume Condition \ref{condmain} with item (b) modified to
    \begin{itemize}
        \item [(b)]  $a_n \to \infty$ is a known sequence, $\te_n \to \te_0$ denotes a sequence of unknown parameters, and for each $n$ a point process $0 = t_0^n < t_1^n < t_2^n < \ldots$ is observed over $[0,1]$ according to the intensity process
            \begin{equation*}
                \la_n(t) = \la_n(t,\te_n) = a_n \mu(t,\te_n) + \int_0^t g(t-u,\tn) dN_n(u)
            \end{equation*}
            and the associated cumulative intensity process
            \begin{equation*}
                \La_n(t) = \La_n(t,\te_n) = \int_0^t \left( a_n \mu(u,\te_n) + \int_0^u g(u-s,\tn) dN_n(s) \right) du.
            \end{equation*}
            Note that the mean intensity process associated with $\lambda_n(t, \te)$ satisfies the linear Volterra equation \(\eta_n(t, \te) = a_n \mu(t, \te) + \int_0^t g(t-u, \te) \eta_n(u, \te)du\). We then define $c(t,\te)$ via \(a_n c(t,\te)  = \eta_n(t,\te)\) and also write $C(t,\te) = \int_0^t c(u,\te) du.$
    \end{itemize}

    We also additionally assume:
    \begin{itemize}
        \item[(e)] The function $(t,\te) \mapsto g(t,\te)$ is two times continuously differentiable with uniformly bounded zeroth, first and second derivatives.
        \item[(f)] The matrix-valued function
            \begin{equation}
                \mathcal{I}(\te) = \int_0^1 \frac{ \left( \pt \mu(t,\te) + \int_0^t \pt g(t-u,\te) c(u,\te) du \right)^{\otimes 2}}{\mu(t,\te) + \int_0^t g(t-u,\te) c(u,\te) du}
            \end{equation}
            is non-singular at the true parameter value $\te_0$. In the $d$-dimensional situation we use the notation $\partial_\te = \partial/\partial \te$ and $\partial_\te^{\otimes 2} = \partial^2/\partial \te \partial \te^T$ in accordance with \cite{chehal2013}.
    \end{itemize}	
\end{condition}

Notice that an inhomogeneous Poisson process is a degenerate case of a self-exciting process where \(g = 0\), therefore any inhomogeneous Poisson process satisfying Condition \ref{condmain} will automatically satisfy Condition \ref{condSEPP}.

Before we state the first lemma, we finally give a preparation which is borrowed from the proof of Theorem 2 in \cite{chehal2013} and provides a typical representation for $\sqrt{a_n}(\hat \te_n - \te_n)$ when maximum likelihood estimation is used. For \(S_n(\te) = \int_0^1 U_n(t,\te) dM_n(t,\te)\) with
\begin{equation} \label{defUn}
    U_n(t,\te) = \frac{\pt \mu(t,\te) + \int_0^t \pt g(t-u,\te) da_n^{-1} N_n(u)}{\mu(t,\te) + \int_0^t g(t-u,\te) da_n^{-1} N_n(u)}
\end{equation}
and
\begin{equation}
    M_n(t,\te) = N_n(t) - \int_0^t \left( a_n\mu(u,\te) +\int_0^u g(u-s,\te) dN_n(s)  \right) du
\end{equation}
we have
\begin{equation} \label{propSn}
    0 = a_n^{-1/2} S_n(\hat \te_n) = a_n^{-1/2} S_n(\te_n) + a_n^{-1} \pt S_n(\te)|_{\te = \Wte_n} a_n^{1/2} \left(\hat \te_n - \te_n\right)
\end{equation}
for some intermediate $\Wte_n$.

\begin{lemma} \label{lem1} Suppose that Condition \ref{condSEPP} holds and recall that $N_n(1)$ denotes the random number of events over $[0,1]$.
    \begin{itemize}
        \item[(a)] We have \(\sup_{t \in [0, 1]} |\ani N_n(t) - C(t, \tz)| \pn 0\) as well as \(N_n(1)/\ka_n \pn 1\) and \(\E\left( N_n(1) \right)/\ka_n \longrightarrow 1\), with $\ka_n = a_n C(1,\te_0)$.
        \item[(b)] Let $\hat \te_n$ denote the maximum likelihood estimator for $\te_n$ based on the observations $0=t_0^n < t_1^n < t_2^n < \ldots$ over $[0,1]$. Then \(\sqrt{a_n} \left(\hat \te_n - \te_n \right) \tol \nN \left(0,\iI(\te_0)^{-1}\right)\), with $\iI(\te_0)$ as in Condition \ref{condSEPP}(f).
        \item[(c)] With $\iI(\te_0)$ as above we have \(a_n^{-1} \pt S_n(\te)|_{\te = \Wte_n} \pn -\iI(\te_0)\).
    \end{itemize}
\end{lemma}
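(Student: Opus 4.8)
The plan is to prove the three assertions in the order (a), (c), (b), since the asymptotic normality in (b) rests on both the uniform law of large numbers in (a) and the convergence of the observed information in (c). The key structural facts are that, because $\te_n$ is the true parameter at stage $n$, the process $M_n(\cdot,\te_n)=N_n(\cdot)-\La_n(\cdot)$ is a square-integrable martingale with $\langle M_n(\cdot,\te_n)\rangle_t=\La_n(t)$, and that $\E[N_n(1)]=\int_0^1\eta_n(s,\te_n)\,ds=a_n C(1,\te_n)=O(a_n)$ because $c(\cdot,\te)$ solves a linear Volterra equation with uniformly bounded data and is therefore uniformly bounded on $[0,1]\times\Te$. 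For (a) I would write $X_n(t):=\ani N_n(t)=\ani M_n(t,\te_n)+\ani\La_n(t)$, bound the martingale part by Doob's $L^2$-inequality ($\E[\sup_{t\le1}|\ani M_n(t,\te_n)|^2]\le 4\ani^{2}\E[N_n(1)]=O(\ani)$), and rewrite the rescaled compensator, via Fubini and one integration by parts using that $g(\cdot,\te)$ is $C^1$, as $\ani\La_n(t)=\int_0^t\mu(s,\te_n)\,ds+\int_0^t g(t-s,\te_n)X_n(s)\,ds$. Since $C(\cdot,\te_0)$ solves the same identity with $\te_0$ in place of $\te_n$ and $C$ in place of $X_n$, the difference $D_n:=X_n-C(\cdot,\te_0)$ solves a linear Volterra equation $D_n(t)=r_n(t)+\int_0^t g(t-s,\te_n)D_n(s)\,ds$ whose inhomogeneity $r_n$ consists of $\ani M_n(\cdot,\te_n)$ plus the remainders $\int_0^t(\mu(s,\te_n)-\mu(s,\te_0))\,ds$ and $\int_0^t(g(t-s,\te_n)-g(t-s,\te_0))C(s,\te_0)\,ds$; these vanish uniformly in probability by the martingale bound and by the uniform continuity in $\te$ of $\mu$ and $g$ (Conditions \ref{condmain}(c) and \ref{condSEPP}(e)) together with $\te_n\to\te_0$. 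A Gr\"onwall estimate for Volterra equations (with $g$ uniformly bounded) then gives $\sup_{t\le1}|D_n(t)|\pn0$; evaluating at $t=1$ yields $N_n(1)/\ka_n\pn1$ (as $C(1,\te_0)>0$) while $\E[N_n(1)]/\ka_n=C(1,\te_n)/C(1,\te_0)\to1$ by continuity of $C(1,\cdot)$.

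For (c), note first that $U_n(t,\te)=\pt\la_n(t,\te)/\la_n(t,\te)$, so $S_n$ is the score $\pt\ell_n$ of the log-likelihood $\ell_n(\te)=\int_0^1\log\la_n(t,\te)\,dN_n(t)-\int_0^1\la_n(t,\te)\,dt$, and differentiating $S_n(\te)=\int_0^1 U_n(t,\te)\,dN_n(t)-\int_0^1 U_n(t,\te)\la_n(t,\te)\,dt$ gives, using $U_n(\pt\la_n)^{\top}=U_n^{\otimes 2}\la_n$, the identity $\pt S_n(\te)=\int_0^1\pt U_n(t,\te)\,dM_n(t,\te)-\int_0^1 U_n(t,\te)^{\otimes 2}\la_n(t,\te)\,dt$. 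Rescaled by $\ani$ and evaluated at $\Wte_n$, the first term is handled by writing $M_n(\cdot,\Wte_n)=M_n(\cdot,\te_n)+\int_0^\cdot(\la_n(s,\te_n)-\la_n(s,\Wte_n))\,ds$: the genuine-martingale part has predictable quadratic variation of order $\ani$ uniformly over $\te$ in a fixed neighbourhood of $\te_0$ (Doob again, $\pt U_n$ and $\ani\la_n$ being uniformly bounded, and $\te\mapsto\int\pt U_n\,dM_n(\cdot,\te_n)$ being Lipschitz in $\te$), hence converges to $0$ in probability, and the drift correction tends to $0$ because $\Wte_n-\te_n\pn0$ and $\mu,g$ are Lipschitz in $\te$ while $\ani N_n$ is tight by (a). For the second term, integrating by parts the $dN_n$-integrals occurring in $U_n$ and in $\ani\la_n$ and invoking (a) shows that, uniformly in $(t,\te)$, $U_n(t,\te)\to\big(\pt\mu(t,\te)+\int_0^t\pt g(t-u,\te)c(u,\te_0)\,du\big)/\big(\mu(t,\te)+\int_0^t g(t-u,\te)c(u,\te_0)\,du\big)$ and $\ani\la_n(t,\te)\to\mu(t,\te)+\int_0^t g(t-u,\te)c(u,\te_0)\,du$; evaluating at $\Wte_n\pn\te_0$ and using $c(t,\te_0)=\mu(t,\te_0)+\int_0^t g(t-u,\te_0)c(u,\te_0)\,du$, the limit of the second term is exactly $\int_0^1$ of the integrand defining $\iI(\te_0)$, so $\ani\pt S_n(\te)|_{\te=\Wte_n}\pn-\iI(\te_0)$.

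For (b), I would first establish consistency $\hat\te_n\pn\te_0$ by adapting the argmax argument behind Theorem 1 of \cite{chehal2013}: part (a) yields uniform (in $\te\in\Te$) convergence of $\ani(\ell_n(\te)-\ell_n(\te_n))$ to a deterministic limit which is uniquely maximised at $\te_0$, identifiability following from the non-singularity of $\iI(\te_0)$ in Condition \ref{condSEPP}(f) and compactness of $\Te$; this makes the intermediate value $\Wte_n$ in (\ref{propSn}) consistent for $\te_0$, which is what (c) needs. Given consistency, rearranging (\ref{propSn}) gives $\sqrt{a_n}(\hat\te_n-\te_n)=-\big(\ani\pt S_n(\te)|_{\te=\Wte_n}\big)^{-1}a_n^{-1/2}S_n(\te_n)$, and by (c) together with the continuous mapping theorem the prefactor converges in probability to $\iI(\te_0)^{-1}$ (which exists by Condition \ref{condSEPP}(f)). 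A martingale central limit theorem then applies to $a_n^{-1/2}S_n(\te_n)=\int_0^1 U_n(t,\te_n)\,d(a_n^{-1/2}M_n(t,\te_n))$: its predictable quadratic variation $\ani\int_0^1 U_n(t,\te_n)^{\otimes 2}\la_n(t)\,dt\pn\iI(\te_0)$ exactly as in (c), and the conditional Lindeberg condition is trivial because the jumps equal $a_n^{-1/2}U_n(t_i,\te_n)$ with $U_n$ uniformly bounded, hence are uniformly $O(a_n^{-1/2})$. Therefore $a_n^{-1/2}S_n(\te_n)\tol\nN(0,\iI(\te_0))$, and Slutsky's theorem gives $\sqrt{a_n}(\hat\te_n-\te_n)\tol\iI(\te_0)^{-1}\nN(0,\iI(\te_0))=\nN(0,\iI(\te_0)^{-1})$.

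The main obstacle is the uniform-in-$(t,\te)$ control of the rescaled point process and of the stochastic integrals $\ani\int_0^t h(t-u,\te)\,dN_n(u)$ that pervade $U_n$, $\la_n$ and $\pt S_n$: the device throughout is Fubini plus integration by parts, which converts these into ordinary integrals of $\ani N_n$ so that (a) can be applied, but making the estimates uniform while the true parameter $\te_n$ itself drifts to $\te_0$ requires careful bookkeeping (and, in the genuinely self-exciting case, an argument that the denominators appearing in $U_n$ stay bounded below on a neighbourhood of $\te_0$, which is automatic in the inhomogeneous Poisson case by Condition \ref{condmain}(c)). A secondary point is re-deriving the consistency of $\hat\te_n$ in this moving-parameter framework, which follows the route of \cite{chehal2013} but must be re-examined.
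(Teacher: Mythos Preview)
Your proposal is correct and follows essentially the same route as the paper, with two minor packaging differences. For (a) you write $D_n=X_n-C(\cdot,\te_0)$ directly as the solution of a Volterra equation and close with Gr\"onwall, whereas the paper inserts the intermediate comparison $|\ani N_n(t)-C(t,\te_n)|+|C(t,\te_n)-C(t,\te_0)|$, handles the first term by replaying the proof of Theorem~1 in \cite{chehal2013} with $\te_n$ in place of $\te_0$, and bounds the second via a Gr\"onwall argument for $|c(t,\te_n)-c(t,\te_0)|$; your version is more self-contained but otherwise identical in substance. For consistency of $\hat\te_n$ you propose the argmax route through $\ani(\ell_n(\te)-\ell_n(\te_n))$, while the paper goes through the score, applying Theorem~5.9 of \cite{vand1998} to $\ani S_n(\te)$ against a deterministic limit $s_0(\te)$ with $\inf_{d(\te,\te_0)\ge\varepsilon}\|s_0(\te)\|>0$; both routes need the same uniform-in-$\te$ convergence obtained from (a) via integration by parts on the $dN_n$-integrals, and both defer global identifiability to \cite{chehal2013} (note that non-singularity of $\iI(\te_0)$ alone only gives local identifiability, so you should cite the same source there rather than claim it follows from Condition~\ref{condSEPP}(f)). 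Your treatment of (c) via the decomposition $\pt S_n(\te)=\int\pt U_n\,dM_n(\cdot,\te)-\int U_n^{\otimes2}\la_n\,dt$ is exactly what the paper does implicitly when it shows $\sup_\te\|\ani\pt S_n(\te)-\pt s_0(\te)\|\pn0$, and your flag about the denominator in $U_n$ in the genuinely self-exciting case is well taken: the paper does not address it explicitly either.
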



We are now in a position to formulate the main theorem of this work. As noted before, and in contrast to Lemma \ref{lem1}, it will only be stated for inhomogeneous Poisson processes in the sense of Condition \ref{condmain}.

\begin{theorem} \label{thm1}
    Let
    \begin{equation*}
        k_n(u,\te,r,s) = \exp\left(-u \int_r^s a_n \mu(t,\te) dt \right), \quad u\ge 0,
    \end{equation*}
    and let $\hat \te_n$ be the maximum likelihood estimator for $\te_n$ based on the observations $0=t_0^n < t_1^n < t_2^n < \ldots$ over $[0,1]$. Set
    \begin{equation*}
        L_n(u,\hat \te_n) = \frac 1{N_n(1)} \sum_{i \ge 1} k_n(u, \hat \te_n, t^n_{i-1}, t^n_i) 1_{\{t^n_i \le 1\}}, \quad L(u) = (1+u)^{-1}.
    \end{equation*}
    Then, under Condition \ref{condmain}, $\sqrt{a_n} \vert \vert G_n \vert \vert$ with $G_n(u) = L_n(u,\hat \te_n) - L(u)$ converges weakly to a limiting distribution $V_{\te_0}$ with a continuous distribution function.
\end{theorem}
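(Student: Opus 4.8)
The plan is to start from the decomposition
\[
\sqrt{a_n}\,G_n(u)=\sqrt{a_n}\bigl(L_n(u,\te_n)-L(u)\bigr)+\sqrt{a_n}\bigl(L_n(u,\hat\te_n)-L_n(u,\te_n)\bigr),
\]
whose first (``oracle'') term uses the true parameter $\te_n$ in the time change and whose second (``estimation'') term isolates the effect of the plug-in. For the estimation term I would Taylor expand $L_n(u,\cdot)$ about $\te_n$, obtaining $\pt L_n(u,\te_n)^{T}\sqrt{a_n}(\hat\te_n-\te_n)$ plus a remainder quadratic in $\hat\te_n-\te_n$ that involves $\pt^{\otimes2}L_n$ at an intermediate parameter. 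Differentiating $k_n(u,\te,r,s)=\exp(-u\int_r^s a_n\mu(t,\te)\,dt)$ shows that the terms of $\pt L_n$ and $\pt^{\otimes2}L_n$ carry explicit factors $a_n$ (and $a_n^2$ for part of the Hessian), but these cancel against the smallness of the increments of $\pt C$ and $\pt^{\otimes2}C$ over $[t_{i-1}^n,t_i^n]$; e.g.\ $a_n\bigl(\pt C(t_i^n,\te_n)-\pt C(t_{i-1}^n,\te_n)\bigr)=\pt\log\mu(t_i^n,\te_n)\,E_i^n+O\bigl((E_i^n)^2a_n^{-1}\bigr)$, where $E_i^n:=\La_n(t_i^n,\te_n)-\La_n(t_{i-1}^n,\te_n)$. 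Since $u\mapsto ux\,e^{-ux}$ and $u\mapsto(ux)^2e^{-ux}$ are bounded uniformly in $x\ge0$, Condition \ref{condmain}(c)--(d) yields $\sup_{\te}\Vert\pt^{\otimes2}L_n(\cdot,\te)\Vert=O_P(1)$ over a neighbourhood of $\te_0$; together with $\sqrt{a_n}(\hat\te_n-\te_n)=O_P(1)$ from Lemma \ref{lem1}(b), the Taylor remainder is $O_P(a_n^{-1/2})\to0$ in $\hH$. A law of large numbers---the empirical distribution of $t_1^n,\dots,t_{N_n(1)}^n$ tending to $\mu(\cdot,\te_0)\,dt/C(1,\te_0)$, the $E_i^n$ behaving like i.i.d.\ standard exponentials whose averaging against the locations decouples in the limit---then gives $\pt L_n(u,\te_n)\pn D(u):=-\tfrac{u}{(1+u)^2}\,\pt\log C(1,\te_0)$ in $\hH$, so that the estimation term equals $D(u)^{T}\sqrt{a_n}(\hat\te_n-\te_n)+o_P(1)$ in $\hH$.

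For the oracle term I would use the time-change theorem: under $\te_n$ the transformed points $s_i^n:=\La_n(t_i^n,\te_n)$ form a standard Poisson process on $[0,L_n]$ with $L_n=a_nC(1,\te_n)$, so $E_1^n,\dots,E_{N_n(1)}^n$ are i.i.d.\ standard exponential (with $N_n(1)$ a stopping index) and $L_n(u,\te_n)-L(u)=N_n(1)^{-1}\sum_{i\ge1}\bigl(e^{-uE_i^n}-L(u)\bigr)1_{\{t_i^n\le1\}}$. Combining a functional central limit theorem for the empirical process of the $E_i^n$ over the uniformly bounded, monotone class $\{x\mapsto e^{-ux}:u\ge0\}$, an Anscombe-type randomisation of the index, legitimated by $N_n(1)/\ka_n\pn1$ with $\ka_n=a_nC(1,\te_0)$ (Lemma \ref{lem1}(a)), and a uniform-in-$n$ bound on the tail $\int_M^\infty(\cdot)\,\be(u)\,du$ to pass from $C[0,M]$ to $\hH$, one obtains that $\sqrt{a_n}\bigl(L_n(\cdot,\te_n)-L(\cdot)\bigr)$ converges weakly in $\hH$ to a centred Gaussian element $\mathbb L$ with covariance kernel $C(1,\te_0)^{-1}\bigl((1+u+v)^{-1}-(1+u)^{-1}(1+v)^{-1}\bigr)$.

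The crux, and what I expect to be the main obstacle, is the \emph{joint} weak convergence of the oracle term and $\sqrt{a_n}(\hat\te_n-\te_n)$ in $\hH\times\R^d$: both are functionals of the same Poisson process---one of the gaps $E_i^n$, the other, via (\ref{propSn}) and Lemma \ref{lem1}(b),(c) (which give $\sqrt{a_n}(\hat\te_n-\te_n)=\iI(\te_0)^{-1}a_n^{-1/2}S_n(\te_n)+o_P(1)$ with $S_n(\te_n)=\sum_i\pt\log\mu(t_i^n,\te_n)1_{\{t_i^n\le1\}}-a_n\int_0^1\pt\mu(t,\te_n)\,dt$), essentially of the event locations. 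I would establish it by conditioning on $N_n(1)$, using the uniform-order-statistics representation of the $s_i^n$ (so that the $E_i^n$ become the spacings of uniform order statistics, which are asymptotically i.i.d.\ exponential), and then appealing to a central limit theorem for a random number of nearly-i.i.d.\ summands in the spirit of Anscombe's theorem. One should not expect asymptotic independence here---the number of summands is itself correlated with both pieces---and identifying the limiting cross-covariance of the two contributions requires a somewhat delicate computation. The outcome is $\sqrt{a_n}\,G_n\tol V:=\mathbb L+D^{T}Z$ in $\hH$, where $(\mathbb L,Z)$ is jointly centred Gaussian and $Z\sim\nN(0,\iI(\te_0)^{-1})$.

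Finally, the norm $h\mapsto\Vert h\Vert$ being continuous on $\hH$, the continuous mapping theorem gives $\sqrt{a_n}\Vert G_n\Vert\tol\Vert V\Vert$, and we let $V_{\te_0}$ be the law of $\Vert V\Vert$. The limit $V$ is a non-degenerate centred Gaussian element of the infinite-dimensional separable Hilbert space $\hH$: its covariance operator differs from that of $\mathbb L$ by a perturbation of rank at most $d$, and that of $\mathbb L$ already has infinitely many positive eigenvalues, its kernel being $C(1,\te_0)^{-1}\int_0^\infty e^{-t}e^{-ut}e^{-vt}\,dt-C(1,\te_0)^{-1}(1+u)^{-1}(1+v)^{-1}$, which is visibly of infinite rank. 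Hence, by the Karhunen--Lo\`eve expansion, $\Vert V\Vert^2=\sum_{j\ge1}\lambda_j\xi_j^2$ with infinitely many $\lambda_j>0$ and i.i.d.\ $\xi_j\sim\nN(0,1)$; such a law is absolutely continuous and has $\P(\Vert V\Vert=0)=0$, so $V_{\te_0}$ has a continuous distribution function, as claimed.
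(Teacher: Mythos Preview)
Your high-level strategy coincides with the paper's: the same oracle/estimation split, the same Taylor expansion of $L_n(u,\cdot)$ around $\te_n$, and the same limit $D(u)=-\tfrac{u}{(1+u)^2}\,\pt\log C(1,\te_0)=\int_0^1 h(u,\te_0,t)\,dt$ for the linearised estimation term (your $D(u)$ is exactly the paper's $\int_0^1 h\,dt$). The paper also replaces $\sqrt{a_n}(\hat\te_n-\te_n)$ by $\iI(\te_0)^{-1}a_n^{-1/2}S_n(\te_n)$, just as you do, and then reduces everything to the weak convergence in $\hH$ of the two-term process $\widetilde G_n$ in (\ref{deftil}).

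The genuine difference is how the \emph{joint} CLT is obtained. You propose to condition on $N_n(1)$, pass to the uniform-order-statistics representation of the $s_i^n$, and invoke an Anscombe-type CLT for the resulting nearly-exchangeable array. The paper instead avoids any conditional argument: it notes that the two summands of $\widetilde G_n$ are end points of martingales with respect to \emph{different} filtrations (continuous time $\fF_t$ for the score, the discrete $\fF_{t_{i-1}}$ for the oracle piece), and reconciles them through a ``big blocks, small blocks'' scheme. One chops $[0,1]$ into blocks of length $(p+1)\ell_n/a_n$ with $\ell_n\asymp a_n^\varrho$, rewrites both contributions as block sums that are martingale increments for a \emph{common} coarse filtration $\fF_{c_{n,p}(j)/a_n}$, and applies a Lindeberg-type martingale CLT (Theorem~2.2.13 in Jacod--Protter). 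The cross-covariance you flag as delicate is computed explicitly within this scheme and produces the $\Phi(f)\,\pt\mu$ term of $\si^2(f)$. Your route is conceptually natural for an inhomogeneous Poisson model and would likely succeed, but the conditioning step introduces exchangeable (not independent) spacings and couples them to the locations $t_i^n=\La_n^{-1}(s_i^n,\te_n)$, so the ``nearly-i.i.d.'' passage hides real work; the paper's blocking argument is more self-contained and delivers the exact limiting variance along the way. Your closing argument for continuity of $V_{\te_0}$ via the Karhunen--Lo\`eve expansion of the Gaussian limit is correct and in fact more explicit than what the paper writes out.
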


As a direct consequence of Lemma \ref{lem1} and Theorem \ref{thm1} we obtain consistency of the bootstrap procedure.

\begin{corollary}
    Under Condition \ref{condmain} we have
    \begin{equation*}
        \lim_{n \to \infty, B \to \infty} \P \left(\sqrt{a_n} \vert \vert G_n \vert \vert \ge (F_n^B)^{-1}(1-\al) \right) = \al
    \end{equation*}
    for any $\al \in (0,1)$.
\end{corollary}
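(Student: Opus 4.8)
The plan is to verify, for the inhomogeneous Poisson setting of Condition \ref{condmain}, the hypotheses of the abstract bootstrap-consistency results of \cite{buekoj2019} in exactly the form set up in Section \ref{sec:results}. Write $V_{\te,n}$ for the law of the test statistic $\sqrt{a_n}\|G_n\|$ when the observed point process has parameter $\te$. Lemma 4.2 in \cite{buekoj2019} reduces (\ref{convboot}) to two ingredients: (i) Condition 4.1 of \cite{buekoj2019}, i.e.\ $d(V_{\te_0,n},V_{\te_0}) \to 0$ with $V_{\te_0}$ having a continuous distribution function; and (ii) the condition of Lemma 2.2(c) in \cite{buekoj2019}, i.e.\ $d(V_{\hat\te_n,n},V_{\te_0}) \stackrel{P_{\te_0,n}}{\longrightarrow} 0$ for every $\te_0 \in \Te$. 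Condition \ref{condmain}(a) is used to ensure that $\te_0$ is interior to $\Te$, so that perturbed parameters $\te_0 + h_n/\sqrt{a_n}$ eventually lie in $\Te$ and the arguments below are legitimate.

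For (i): $V_{\te_0,n}$ is precisely the distribution of $\sqrt{a_n}\|G_n\|$ in the special case $\te_n \equiv \te_0$ of Theorem \ref{thm1}, so Theorem \ref{thm1} delivers both the weak convergence $V_{\te_0,n}\to V_{\te_0}$ and the continuity of the limit. Moreover Theorem \ref{thm1} is stated under Condition \ref{condmain}, hence allows an arbitrary true-parameter sequence $\te_n \to \te_0$, and therefore already yields the locally uniform statement (\ref{Gnweakcond3}), $d(V_{\te_n,n},V_{\te_0}) \to 0$ for all $\te_n \to \te_0$; taking $\te_n = \te_0 + h_n/\sqrt{a_n}$ for any $h_n \to h$ recovers (\ref{Gnweakcond2}).

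For (ii): with $\Phi_{\te_n,n}(h) = d(V_{\te_0+h/\sqrt{a_n},n},V_{\te_0})$ one has $d(V_{\hat\te_n,n},V_{\te_0}) = \Phi_{\te_n,n}(\sqrt{a_n}(\hat\te_n-\te_0))$. Since an inhomogeneous Poisson process obeying Condition \ref{condmain} also obeys Condition \ref{condSEPP}, Lemma \ref{lem1}(b) applied with $\te_n \equiv \te_0$ gives $\sqrt{a_n}(\hat\te_n-\te_0) \tol N_{\te_0} \sim \nN(0,\iI(\te_0)^{-1})$. By the previous paragraph, $\Phi_{\te_n,n}(h_n) \to \Phi(h) \equiv 0$ for every $h_n \to h$, which is exactly the ``continuous convergence'' hypothesis of the extended continuous mapping theorem (Theorem 18.11 in \cite{vand1998}); applying it yields $\Phi_{\te_n,n}(\sqrt{a_n}(\hat\te_n-\te_0)) \tol \Phi(N_{\te_0}) = 0$, and convergence in distribution to a constant is convergence in $P_{\te_0,n}$-probability. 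Feeding (i) and (ii) into Lemma 4.2 of \cite{buekoj2019} then produces (\ref{convboot}), which is the claim.

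The substantive content thus sits entirely in Theorem \ref{thm1} and Lemma \ref{lem1}, both assumed available here, and the main conceptual point — the reason the whole framework is phrased via sequences $\te_n \to \te_0$ rather than a fixed parameter — is that the conditional bootstrap step (ii) genuinely requires the locally uniform weak convergence (\ref{Gnweakcond3}), not merely weak convergence under a single $\te_0$. What remains is routine bookkeeping: confirming that $V_{\te,n}$, the bootstrap empirical distribution $F_n^B$, and the maps $\te \mapsto V_{\te,n}$ and $x_n \mapsto (\hat\te_n, \sqrt{a_n}\|G_n\|)$ meet the measurability and regularity requirements under which Lemmas 2.2 and 4.2 of \cite{buekoj2019} are formulated; I do not anticipate any obstacle there.
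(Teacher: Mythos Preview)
Your argument is correct and mirrors the paper's own reasoning almost line by line: the Corollary is presented there as a direct consequence of Lemma \ref{lem1} and Theorem \ref{thm1}, with the reduction via Lemma 4.2 and Lemma 2.2(c) of \cite{buekoj2019}, the reformulation through $\Phi_{\te_n,n}$, and the application of the extended continuous mapping theorem (Theorem 18.11 in \cite{vand1998}) all spelled out in Section \ref{sec:results} prior to the statement of the Corollary. There is no separate proof in the paper beyond this, so your write-up is essentially a faithful reconstruction of the intended argument.
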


\begin{remark}
    \begin{itemize}
        \item[(a)] As noted before, to base a goodness-of-fit test on Laplace transforms is only one of many options, as essentially all methods to test the difference between two distributions are in play. We use this specific method essentially only for the reason that it allows for easy differentiability with respect to various parameters, a fact which is heavily exploited in the proof of Theorem \ref{thm1}. Empirically, there is no problem in choosing other variants such as the Kolmogorov-Smirnov distance which performs equally well in simulations.
        \item[(b)] We have also conducted further simulations which check that the procedure is not only valid for the class of (inhomogeneous) Poisson processes, but it also works for other self-exciting processes such as Hawkes processes (which are explicitly discussed in Section \ref{sec:simul}), Hawkes processes with varying baseline intensity, renewal processes, and the two-stage model proposed in \cite{Selva2022}. In all cases, the general procedure for the bootstrap-based test is the same, with the differences only residing in the specific algorithms for estimating the unknown parameter and in the calculation of the transform \(t_i \mapsto s_i = \Lambda(t_i)\).
				\item[(c)] While the main strategy of the proof of Theorem \ref{thm1} as laid out in Section \ref{subsect:pth1} should remain valid for general self-exciting point processes, we heavily rely on a variety of properties of inhomogeneous Poisson processes when it comes to detailed derivations. In particular, this regards distributional aspects in the proof of Lemma \ref{lem3} as well as a simplified form of $U_n$ from (\ref{defUn}) which is exploited in the proof of Theorem \ref{thm2} below. Finding an alternative proof which also works e.g.\ for Hawkes processes is an important topic for future research. 
    \end{itemize}
\end{remark}

\section{Simulation study}\label{sec:simul}

This section aims to assess the performance of our bootstrap-based goodness-of-fit test by investigating its type I error and power in different settings and comparing it with a popular alternative method.

\cite{ogat1988} presented several procedures based on residual analysis to assess the fit of a model to the data. One of them relies on the fact that, if \(t_1, \dots, t_n\) are event times originating from a process \(N(t)\) with corresponding conditional intensity process \(\lambda(t)\), then the random variables \(s_1, \dots, s_n\), defined via \(s_i = \int_0^{N(t_i)} \lambda(s) ds\), are equal in distribution to the event times of a Poisson process. Therefore, a good strategy would be simply to test if the interevent times of the time transformed process follow a unit exponential distribution.

In the situation of goodness-of-fit testing, however, where the intensity process \(\lambda\) depends on a parameter \(\theta\), the transformation \((t_1, \dots, t_n) \to (s_1, \dots, s_n)\) can only be calculated using an estimator \(\hat{\theta}\) for the true parameter \(\theta_0\). \cite{bebb2013} provides an implementation of such a test, which will be used here as a reference for the performance of our method and to show to which extent parameter estimation error can affect the results.

\subsection{Type I error}\label{sub:typeI}

In this subsection we discuss the type I error for both methods and check to which percentage the null hypothesis was incorrectly rejected. The following procedure was carried out using a level of $0.05$.

\begin{algorithm}\label{alg:test_typeI}
    \begin{enumerate}
        \item Simulate 1000 data sets from a known process with fixed parameters.
        \item Perform the tests for each simulation, correctly assuming that the null hypothesis is true.
        \item Compute the percentage of times the null hypothesis was rejected with level 0.05.
    \end{enumerate}
\end{algorithm}

Figure \ref{fig:typeI_poisson} shows the result of six applications of Algorithm \ref{alg:test_typeI} for Poisson processes on the interval \([0, 1]\), each corresponding to different parameters. We can see that our bootstrap-based test, when performed with \(1000\) bootstrap samples, maintains a level close to \(0.05\) for processes with an expected number of events of as few as 30. The non-bootstrap-based test, in contrast, is not consistent as it is very conservative and rarely rejects the null hypothesis. This behaviour might also point towards a potential lack of power.

\begin{figure}
    \begin{center}
        \includegraphics[width=0.5\textwidth]{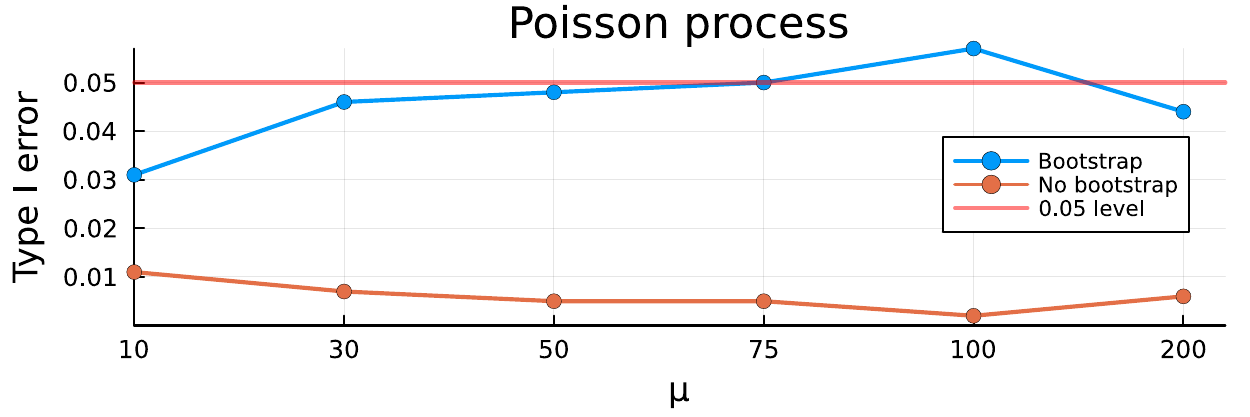}
        \caption{Type I error for the bootstrap and non-bootstrap based tests for Poisson processes with different parameters.}
        \label{fig:typeI_poisson}
    \end{center}
\end{figure}

This pattern persists also for more general Hawkes processes, which are considerably more complex. For the next tests, we consider Hawkes processes with an exponential activation function. Figure \ref{fig:typeI_hawkes} shows the results of the procedure for several parameter configurations. Hawkes processes have three parameters: \(\mu\), the constant baseline intensity; \(\alpha\), the amplitude of the increase in intensity following each event; and \(\beta\), the decay rate. For a realization \((t_1 < \dots < t_n)\) of a Hawkes process, its conditional intensity function is given by \(\lambda(t; \mu, \alpha, \beta) = \mu + \sum_{t_i < t} \alpha e^{-\beta t_i}\).

Of particular importance is the branching factor \(\alpha / \beta\), which is the expected number of events a single event causes due to the self-exciting component. In Figure \ref{fig:typeI_hawkes}, the parameter \(\mu\) and the ratio \(\alpha / \beta \) were kept fixed in each subfigure. By increasing the parameter \(\alpha\) the response of the intensity function to an event happens faster and therefore the behaviour is less similar to a Poisson process.

\begin{figure}
    \begin{center}
        \includegraphics[width=0.9\textwidth]{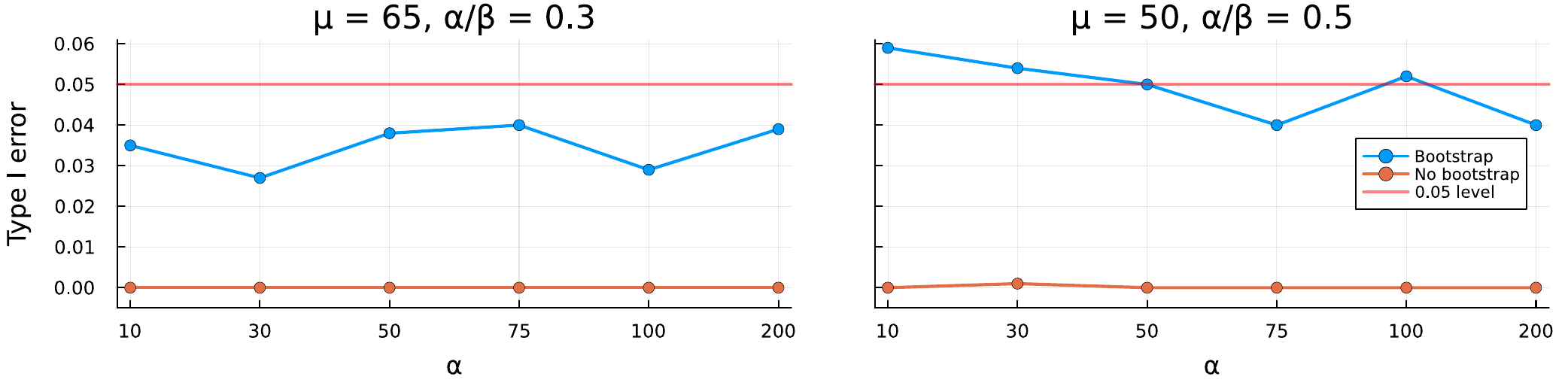}
        \caption{Type I error for both tests with respect to Hawkes processes.}
        \label{fig:typeI_hawkes}
    \end{center}
\end{figure}

In all tests, the expected number of events is approximately equal, and we can see that a reasonably good level is maintained for all configurations. This behaviour is in sharp contrast to the performance of the test without bootstrap. 

\subsection{Power}

The next step is to test the power of the methods. The procedure is similar to Algorithm \ref{alg:test_typeI}, but now two distinct models \(A\) and \(B\) are needed. The idea is to test how effective the tests are in rejecting incorrect hypotheses.

\begin{algorithm}\label{alg:test_power}
    \begin{enumerate}
        \item Simulate 1000 data sets from a known process \(A\) with fixed parameters.
        \item Perform the tests for each simulation, incorrectly assuming that the data stems from some process $B$.
        \item Compute the percentage of times the null hypothesis was rejected with level 0.05.
    \end{enumerate}
\end{algorithm}

In Figure \ref{fig:power_comparison}, the power is plotted for a Hawkes process as model \(A\) and a Poisson process as model \(B\). In essence, this graphic shows how good each of the tests are in differentiating Poisson from Hawkes processes.

\begin{figure}
    \begin{center}
        \includegraphics[width=1\textwidth]{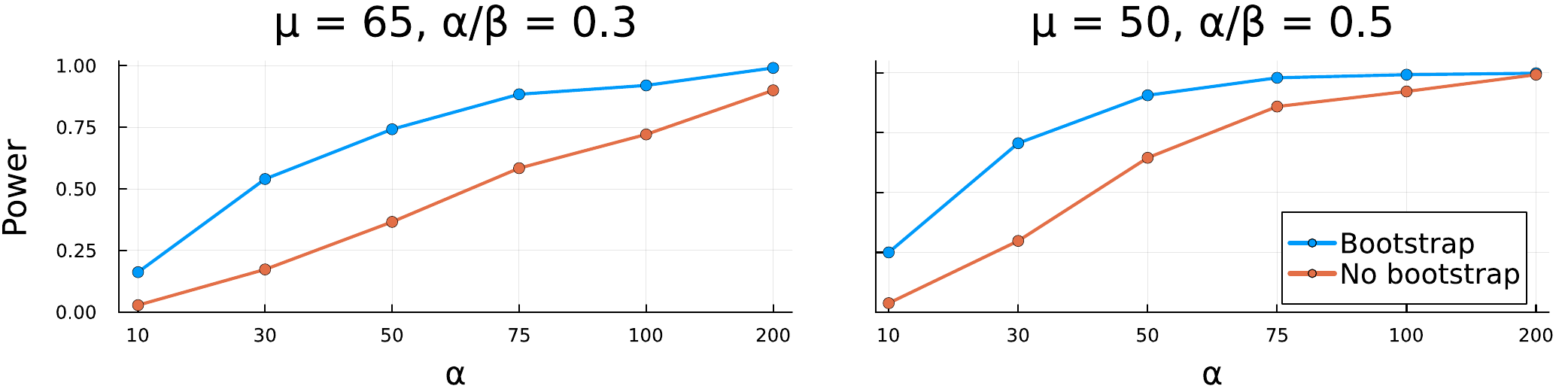}
        \caption{Power of tests for simulated Hawkes processes with the null hypothesis that these are Poisson processes.}
        \label{fig:power_comparison}
    \end{center}
\end{figure}

It can be seen that both tests perform better for larger values of the parameter \(\alpha\) and the branching factor \(\alpha / \beta\), but the bootstrap-based approach is clearly superior.

\section{Empirical application} \label{sec:emp}

It is widely accepted that volcanism can affect local and global climate, primarily through the injection of aerosols into the stratosphere \citep{Robock2000}. A less known and still being explored hypothesis is that climate variations may influence volcanism. The proposed mechanism for this effect is the stress change in Earth's crust due to variations in sea level and thickness of the ice sheets (e.g., \cite{Rampino1979}; \cite{Jellinek2004}; \cite{Cooper2018}).

\cite{Kutterolf2013} presents a compilation of eruption records from multiple sites around the Pacific Ring of Fire over the last 1 million years. These records are based on tephra layers from cores drilled on the ocean floor which are identified and dated. Younger layers tend to suffer less from degradation than older ones, possibly causing an artificial peak of volcanic activity. To mitigate this bias, eruptions dated within the first 50 thousand years were excluded from the analysis. This potential bias can be seen in Figure \ref{fig:records}, where we show the entire eruption records smoothed with a Gaussian kernel.

\begin{figure}
    \begin{center}
        \includegraphics[width=0.9\textwidth]{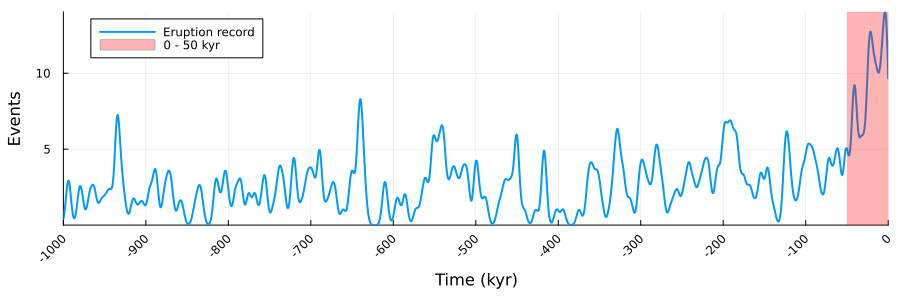}
        \caption{Eruption record smoothed with a Gaussian kernel (\(\sigma\) = 3000) and highlighted period excluded from the analysis.}
        \label{fig:records}
    \end{center}
\end{figure}

To determine whether glacial cycles affect the eruptive process, the data set was split into two groups based on the sites' latitudes. The first part contains eruptions from tropical regions that were not glaciated (Peru, Ecuador, Central America, Philippines, and Tonga). The second comprises sites in extratropical sites in both hemispheres which were more likely subjected to glaciation (New Zealand, Nankai Trough, Kamchatka, Alaska, and the Aleutian Basin and Arc).

Applying our bootstrap-based goodness-of-fit test to both data sets with the hypothesis that they are distributed as Poisson processes shows distinctively different patterns. While for the tropical sites the returned p-value is 0.44, suggesting that a Poisson process is a good fit for the data, the p-value for the extratropical sites is 0.03. For the hypothesis that the extratropical eruptions stem from a Hawkes process as in Section \ref{sub:typeI}, in contrast, the p-value of the test is 0.80. The estimated conditional intensity functions for both data sets are shown in Figure \ref{fig:CIFs}.

\begin{figure}
    \begin{center}
        \includegraphics[width=0.9\textwidth]{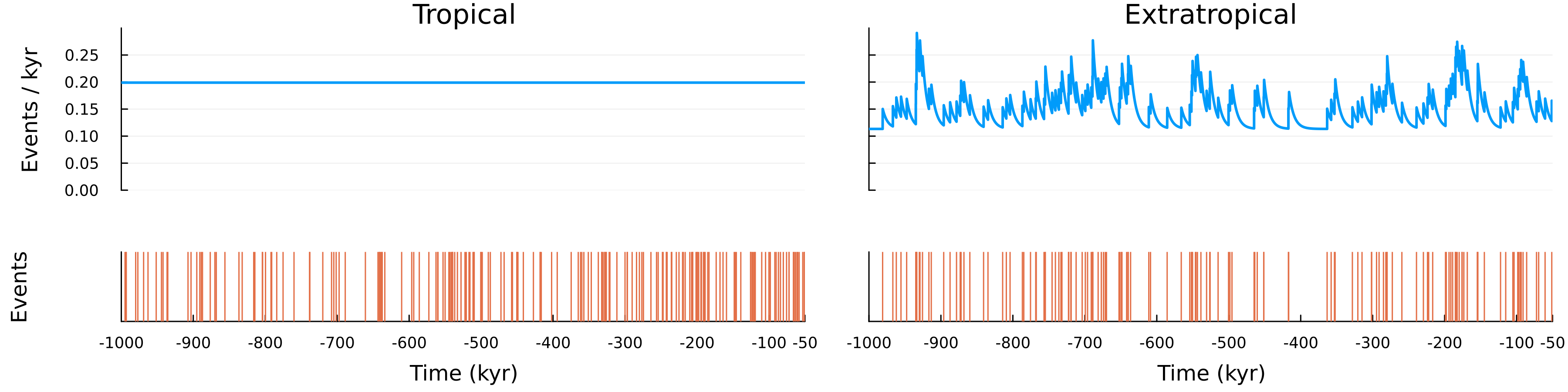}
        \caption{Estimated CIF for tropical (left) under the Poisson assumption and extra\-tropical (right) sites using a Hawkes process.}
        \label{fig:CIFs}
    \end{center}
\end{figure}

These results show that, while tropical eruptions occurred uniformly during the period, in the extratropical regions there is a tendency for events to cluster in time, suggesting different mechanisms governing the processes. Note that the non-bootstrap-based test from Section \ref{sec:simul}, when applied to the hypothesis that the extratropical data set is distributed as a Poisson process, returns a p-value of 0.42, in clear disagreement with the result for the bootstrap-based test. This might be a consequence of the non-bootstrap based being very conservative as discussed previously.

\section{Proofs} \label{sec:proof}

Throughout the proofs we will use $K$ as an unspecified positive constant which might change from line to line.

\subsection{Proof of Lemma \ref{lem1}}

We will start with a few auxiliary results for which our notation mostly follows the one in \cite{chehal2013}. We assume Condition \ref{condSEPP} in all cases.

\begin{lemma}\label{hbound}
    The function 	\(c(t, \te)\) is uniformly bounded in $t\in[0,1]$ and $\te \in \Te$.
\end{lemma}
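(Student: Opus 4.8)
The plan is to observe that $c(t,\te)$ itself satisfies a linear Volterra integral equation which no longer involves $a_n$, and then to bound its solution via Gr\"onwall's inequality using the uniform bounds on $\mu$ and $g$ provided by Conditions \ref{condmain}(c) and \ref{condSEPP}(e).

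First I would divide the defining equation $\eta_n(t,\te) = a_n\mu(t,\te) + \int_0^t g(t-u,\te)\eta_n(u,\te)\,du$ by $a_n$ and use $a_n c(t,\te) = \eta_n(t,\te)$ to see that, for every fixed $\te \in \Te$, the function $t \mapsto c(t,\te)$ solves
\begin{equation*}
    c(t,\te) = \mu(t,\te) + \int_0^t g(t-u,\te)\, c(u,\te)\,du, \qquad t \in [0,1].
\end{equation*}
In particular $c$ does not depend on $n$, and since $\mu(\cdot,\te)$ and $g(\cdot,\te)$ are bounded and continuous, standard Volterra theory (Picard iteration, whose iterated kernels decay factorially fast on $[0,1]$ because $g$ is bounded) guarantees that this equation has a unique continuous solution on $[0,1]$; hence $c(t,\te)$ is indeed well-defined.

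Next, set $K_\mu = \sup_{t\in[0,1],\,\te\in\Te} |\mu(t,\te)|$ and $K_g = \sup_{t\in[0,1],\,\te\in\Te} |g(t,\te)|$, both finite by Conditions \ref{condmain}(c) and \ref{condSEPP}(e). Taking absolute values in the displayed equation gives $|c(t,\te)| \le K_\mu + K_g \int_0^t |c(u,\te)|\,du$ for all $t\in[0,1]$ and $\te\in\Te$, and Gr\"onwall's inequality then yields $|c(t,\te)| \le K_\mu \exp(K_g t) \le K_\mu \exp(K_g)$ for all $t\in[0,1]$ and $\te\in\Te$, which is the asserted uniform bound.

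I do not expect a genuine obstacle here. The only points worth being careful about are that $c$ is $n$-free and that its Volterra equation is well-posed — both immediate after dividing by $a_n$ — and that the constants $K_\mu$ and $K_g$ can be taken uniform over all of $\Te$, which is precisely what the "uniformly bounded zeroth, first and second derivatives" hypotheses deliver.
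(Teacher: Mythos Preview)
Your proposal is correct and follows essentially the same route as the paper's own proof: derive the $a_n$-free Volterra equation for $c(t,\te)$, bound it using the uniform bounds on $\mu$ and $g$, and apply Gr\"onwall's inequality. You are slightly more explicit about well-posedness and about why $c$ is independent of $n$, but the argument is the same.
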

\begin{proof}
    By definition, $c(t,\te)$ satisfies the equation
    \begin{equation} \label{formh}
        c(t, \te) = \mu(t, \te) + \int_0^t g(t-u, \te) c(u, \te)du
    \end{equation}
    and is hence continuous as a function in $t$. Then 
    \begin{equation*}
        c(t, \te) \le K + K \int_0^t c(u, \te) du  \leq K(1+\exp(K))
    \end{equation*}
		by the uniform boundedness of $\mu$ and $g$ and Gronwall's lemma.
\end{proof}

\begin{lemma}\label{hconv}
    We have \(\sup_{t \in [0, 1]} |c(t, \tn) - c(t, \tz)| \longrightarrow 0\).
\end{lemma}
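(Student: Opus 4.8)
The plan is a standard Gronwall argument applied to the difference $d_n(t) := c(t,\tn) - c(t,\tz)$. Subtracting the defining Volterra equation (\ref{formh}) for the parameter $\tz$ from the one for $\tn$, and adding and subtracting $g(t-u,\tn)\,c(u,\tz)$ inside the integral, one obtains
\begin{equation*}
    d_n(t) = \big(\mu(t,\tn) - \mu(t,\tz)\big) + \int_0^t g(t-u,\tn)\, d_n(u)\, du + \int_0^t \big(g(t-u,\tn) - g(t-u,\tz)\big) c(u,\tz)\, du
\end{equation*}
for every $t \in [0,1]$.

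First I would control the two inhomogeneous terms uniformly in $t$. Since the maps $(t,\te)\mapsto\mu(t,\te)$ and $(t,\te)\mapsto g(t,\te)$ are continuous on the compact set $[0,1]\times\Te$ — in fact Lipschitz in $\te$ uniformly in $t$, as their $\te$-derivatives are uniformly bounded by Conditions \ref{condmain}(c) and \ref{condSEPP}(e) — and since $\tn\to\tz$, the quantity
\begin{equation*}
    \varepsilon_n := \sup_{t\in[0,1]} |\mu(t,\tn) - \mu(t,\tz)| + \sup_{v\in[0,1]} |g(v,\tn) - g(v,\tz)|
\end{equation*}
tends to zero. Combining this with the uniform bounds $\sup_{u\in[0,1],\,\te\in\Te}|c(u,\te)| \le K$ from Lemma \ref{hbound} and $\sup|g| \le K$, the displayed identity yields the integral inequality
\begin{equation*}
    |d_n(t)| \le K\varepsilon_n + K\int_0^t |d_n(u)|\, du, \qquad t\in[0,1].
\end{equation*}

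Finally, passing to the running supremum $D_n(t) := \sup_{s\le t}|d_n(s)|$ (legitimate by monotonicity of the integral, since for $s\le t$ one has $\int_0^s|d_n| \le \int_0^t D_n$) gives $D_n(t) \le K\varepsilon_n + K\int_0^t D_n(u)\, du$, and Gronwall's lemma then produces $D_n(1) \le K\varepsilon_n\, e^{K} \to 0$, which is exactly the assertion. I do not expect any genuine obstacle here; the only point requiring a little care is that all estimates must be kept uniform in $t$, which is precisely why one splits off the kernel term $g(t-u,\tn)\,d_n(u)$ and applies Gronwall to $D_n$ rather than to $d_n$ directly.
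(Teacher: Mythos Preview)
Your argument is correct and is essentially the same as the paper's: subtract the two Volterra equations for $c(\cdot,\tn)$ and $c(\cdot,\tz)$, bound the inhomogeneous pieces via Lipschitz continuity of $\mu$ and $g$ in $\te$ together with Lemma~\ref{hbound}, and close with Gronwall. The only cosmetic differences are that the paper adds and subtracts $g(t-u,\tz)c(u,\tn)$ rather than $g(t-u,\tn)c(u,\tz)$ (a symmetric choice), and that your detour through the running supremum $D_n$ is harmless but unnecessary --- the standard Gronwall inequality applied to $|d_n(t)|$ already yields $|d_n(t)|\le K\varepsilon_n e^{Kt}$ pointwise, from which the uniform bound on $[0,1]$ is immediate.
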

\begin{proof}
    Using (\ref{formh}) we obtain
    \begin{equation*}
        \begin{split}
            \left| c(t, \tz) - c(t, \tn) \right| \le& \left| \mu(t,\tz) - \mu(t,\tn) \right| +  \left| \int_0^t \left( g(t-u,\tz) - g(t-u,\tn) \right) c(t,\tn) du \right|\\
                                                    & + \left| \int_0^t g(t-u,\tz) \left( c(t,\tz) - c(t,\tn) \right)du \right|.
        \end{split}
    \end{equation*}
    Recall that $\mu$ and $g$ are continuously differentiable with uniformly bounded derivatives. Hence with Lemma \ref{hbound} we have
    \begin{equation*}
        \sup_{t \in [0,1]} \left| \mu(t,\tz) - \mu(t,\tn) \right| +  \left| \int_0^t \left( g(t-u,\tz) - g(t-u,\tn) \right) c(t,\tn) du \right| \le K(\tn - \tz).
    \end{equation*}
    Again from Gronwall's lemma we can conclude
    \begin{equation*}
        \left| c(t, \tz) - c(t, \tn) \right| \le K(\tn - \tz) + K \int_0^t \left| c(t,\tz) - c(t,\tn) \right| du \le K(\tn - \tz)
    \end{equation*}
    for all \(t \in [0,1]\), from which the claim follows.
\end{proof}

We are now in a position to prove Lemma \ref{lem1}(a). For the first claim, one can use the inequality
\begin{equation*}
    \left|\ani N_n(t) - C(t, \tz) \right| \le \left| \ani N_n(t) - C(t, \tn) \right| + \left| C(t, \tn)- C(t, \tz)\right|,
\end{equation*}
and deduce \(\sup_{t \in [0, 1]} | \ani N_n(t) - C(t, \tn)| \pn 0\) along the lines of Theorem 1 from \cite{chehal2013} by replacing \(\tz\) by \(\tn\) in all occurrences, and redefining \(\bar{\mu}\) and \(\bar{g}\) as
\begin{equation*}
    \bar{\mu} = \sup_{\substack{t \in [0,1]\\\theta \in \Theta}} \mu(t, \te) \quad \text{ and } \quad \bar{g} = \sup_{\substack{t \in [0,1]\\\theta \in \Theta}} g(t, \te).
\end{equation*}
Furthermore,
\begin{equation*}
    \sup_{t \in [0, 1]} \left|C(t, \tn) - C(t, \tz)\right| = \sup_{t \in [0, 1]} \left|\int_0^1 c(u, \tn) - c(u, \tz) du \right| \longrightarrow 0
\end{equation*}
is a consequence of Lemma \ref{hconv}.

It is then easy to deduce
\begin{equation*}
    \left|\frac{N_n(1)}{\ka_n} - 1 \right| = \frac{\left|N_n(1)-a_n C(1,\te_0) \right|}{a_n C(1,\te_0)} = \frac{\left|\ani N_n(1)-C(1,\te_0) \right|}{C(1,\te_0)} \pn 0
\end{equation*}
from the fact that $\mu$ is uniformly bounded from below, as well as
\begin{equation*}
    \frac{\E\left(N_n(1) \right)}{\ka_n} -1  = \frac{\int_0^1 a_n \left(c(t,\tn)-c(t,\tz) \right) dt}{a_n \int_0^1 c(t,\tz) dt} = \frac{\int_0^1 \left(c(t,\tn)-c(t,\tz) \right) dt}{\int_0^1 c(t,\tz) dt} \longrightarrow 0
\end{equation*}
utilizing that $\eta(t,\tn) = a_n c(t,\tn)$ is the mean intensity function of the point process.

For the remaining parts of Lemma \ref{lem1} we first state and prove a few corollaries from the previous results. The first is a direct consequence of Lemma \ref{hconv} and uniform boundedness of $g$ and its derivatives and hence stated without proof.

\begin{corollary}\label{gconv} We have
    \begin{equation*}
        \sup_{\substack{t \in [0,1]\\\te \in \Theta}} \left| \int_0^t g(t-u, \te) c(t,\tn)du - \int_0^t g(t-u, \te) c(t,\tz)du \right| \conv 0,
    \end{equation*}
    and similarly with $g$ replaced by $\pt g$ or \(\ptt g\).
\end{corollary}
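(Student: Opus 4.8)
The statement to prove is a purely deterministic uniform-convergence fact: with $g$ and its first two $\te$-derivatives uniformly bounded (Condition \ref{condSEPP}(e)) and with $\sup_{t\in[0,1]}|c(t,\tn)-c(t,\tz)|\to 0$ already available from Lemma \ref{hconv}, one has
\[
    \sup_{\substack{t \in [0,1]\\\te \in \Theta}} \left| \int_0^t g(t-u, \te) \big(c(t,\tn)-c(t,\tz)\big)\,du \right| \longrightarrow 0,
\]
and likewise with $g$ replaced by $\pt g$ or $\ptt g$. The key observation is that, as written, the integrand contains $c(t,\cdot)$ (not $c(u,\cdot)$), so the difference $c(t,\tn)-c(t,\tz)$ factors out of the $u$-integral entirely.

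\textbf{Main steps.} First I would pull the factor out: since $c(t,\tn)-c(t,\tz)$ does not depend on the integration variable $u$,
\[
    \left| \int_0^t g(t-u,\te)\big(c(t,\tn)-c(t,\tz)\big)\,du \right|
    = \left| c(t,\tn)-c(t,\tz) \right| \cdot \left| \int_0^t g(t-u,\te)\,du \right|.
\]
Second, I would bound the remaining integral uniformly: by Condition \ref{condSEPP}(e) we have $\sup_{t,\te}|g(t,\te)| =: \bar g < \infty$, so $\big|\int_0^t g(t-u,\te)\,du\big| \le \int_0^1 \bar g\,du = \bar g$ for every $t\in[0,1]$ and $\te\in\Te$. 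Third, combining the two bounds,
\[
    \sup_{\substack{t \in [0,1]\\\te \in \Theta}} \left| \int_0^t g(t-u, \te) \big(c(t,\tn)-c(t,\tz)\big)\,du \right|
    \le \bar g \cdot \sup_{t\in[0,1]} \left| c(t,\tn)-c(t,\tz) \right| \longrightarrow 0
\]
by Lemma \ref{hconv}. Finally, the versions with $\pt g$ and $\ptt g$ follow by the identical argument, using that $\pt g$ and $\ptt g$ are also uniformly bounded by Condition \ref{condSEPP}(e); the factorisation is unchanged since only $c(t,\cdot)$ depends on the parameter and not on $u$.

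\textbf{Anticipated obstacle.} There is essentially no obstacle here — this is a one-line estimate, which is presumably why the authors state the corollary without proof. The only point requiring a moment's care is confirming that it really is $c(t,\tn)$ and $c(t,\tz)$ (same first argument $t$) appearing inside the integral, rather than $c(u,\cdot)$; if it were $c(u,\cdot)$ one would instead need to integrate $\sup_u|c(u,\tn)-c(u,\tz)|$ against $|g(t-u,\te)|$, which still works by dominated convergence / the uniform $L^1$ bound on $g(t-\cdot,\te)$ over $[0,1]$, but the factorisation trick would no longer apply. As stated, the factorisation is exact and the result is immediate from Lemma \ref{hconv} and the uniform boundedness in Condition \ref{condSEPP}(e).
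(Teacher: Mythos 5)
Your proof is correct and is exactly the argument the paper has in mind: the authors state Corollary \ref{gconv} without proof as a direct consequence of Lemma \ref{hconv} and the uniform boundedness of $g$, $\pt g$, $\ptt g$, which is precisely your bound $\bar g \cdot \sup_{t}|c(t,\tn)-c(t,\tz)| \to 0$. Your side remark about $c(t,\cdot)$ versus $c(u,\cdot)$ is also well taken — the corollary is later applied with $c(u,\cdot)$ inside the integral, and as you note the same estimate (bounding the difference by its supremum and integrating $|g(t-u,\te)|\le\bar g$ over $[0,1]$) covers that case too.
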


\begin{corollary}\label{lambdaconv}
    For every fixed \(\te \in \Theta\) we have
    \begin{equation*}
        \sup_{t \in [0, 1]} \left| \int_0^t g(t-u, \te) d \ani N_n(u) - \int_0^t g(t-u, \te) c(u,\tn) du \right| \pn 0,
    \end{equation*}
    and similarly for other functions with uniformly bounded derivatives. 	
\end{corollary}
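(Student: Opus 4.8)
\textbf{Proof proposal for Corollary \ref{lambdaconv}.}
The plan is to turn both integrals into ordinary Lebesgue integrals by integration by parts, thereby reducing everything to the uniform approximation of $a_n^{-1}N_n$ that has already been obtained. As a preliminary, write $\bar N_n(t) = \ani N_n(t)$ and record that
\[
    \sup_{t \in [0,1]} \bigl|\bar N_n(t) - C(t,\tn)\bigr|
    \le \sup_{t \in [0,1]} \bigl|\bar N_n(t) - C(t,\tz)\bigr|
    + \sup_{t \in [0,1]} \bigl|C(t,\tz) - C(t,\tn)\bigr| \pn 0 ,
\]
where the first summand vanishes in probability by Lemma \ref{lem1}(a) and the second vanishes deterministically by integrating the bound of Lemma \ref{hconv}.

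Next, fix $\te \in \Te$. For each $t \in [0,1]$ the map $u \mapsto g(t-u,\te)$ is continuously differentiable on $[0,t]$ with derivative $-\,\partial_1 g(t-u,\te)$, where $\partial_1 g$ denotes the partial derivative of $g$ in its first (time) argument; moreover $\bar N_n(0) = C(0,\tn) = 0$. Integration by parts for the (pure-jump) integrator $\bar N_n$ then yields
\[
    \int_0^t g(t-u,\te)\, d\bar N_n(u)
    = g(0,\te)\,\bar N_n(t) + \int_0^t \bar N_n(u)\, \partial_1 g(t-u,\te)\, du ,
\]
and the identical identity holds with $\bar N_n$ replaced by $C(\cdot,\tn)$ and $d\bar N_n(u)$ replaced by $c(u,\tn)\,du$, since $dC(u,\tn) = c(u,\tn)\,du$. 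Subtracting the two and using that $|g|$ and $|\partial_1 g|$ are bounded by a constant $K$ on $[0,1]\times\Te$ (Condition \ref{condSEPP}(c),(e)),
\[
    \sup_{t \in [0,1]}\left| \int_0^t g(t-u,\te)\, d\bar N_n(u) - \int_0^t g(t-u,\te) c(u,\tn)\, du \right|
    \le 2K \sup_{u \in [0,1]} \bigl|\bar N_n(u) - C(u,\tn)\bigr| ,
\]
where we bounded the remaining time integral by $K\,t\,\sup_{u}|\bar N_n(u)-C(u,\tn)| \le K\sup_{u}|\bar N_n(u)-C(u,\tn)|$. The right-hand side converges to zero in probability by the preliminary step, which proves the first claim; the addendum follows verbatim with $g$ replaced by $\pt g$ or $\ptt g$, which are again uniformly bounded with uniformly bounded first derivatives, only the constant $K$ changing.

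The argument is essentially routine; the only point requiring a word of care is the integration-by-parts step for the counting-process integrator. Depending on whether one integrates over $[0,t)$ or $[0,t]$, the boundary term differs by at most $\ani$ times a single jump of $N_n$, which is $O(\ani)$ uniformly in $t$ and hence absorbed into the bound above, so the stated identity holds up to a negligible term; the same remark applies to the (a.e.) identification of $\bar N_n(u-)$ with $\bar N_n(u)$ inside the Lebesgue integral.
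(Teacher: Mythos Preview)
Your proof is correct and follows essentially the same route as the paper: both arguments apply integration by parts to rewrite the difference as $g(0,\te)\bigl(\bar N_n(t)-C(t,\tn)\bigr) + \int_0^t \bigl(\bar N_n(u)-C(u,\tn)\bigr)\,\partial_1 g(t-u,\te)\,du$ and then conclude from the uniform convergence $\sup_t|\bar N_n(t)-C(t,\tn)|\pn 0$ together with the uniform boundedness of $g$ and its derivative. Your added remarks on the boundary term for the pure-jump integrator and on the passage from $C(\cdot,\tz)$ to $C(\cdot,\tn)$ are careful clarifications, but the underlying strategy is identical.
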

\begin{proof}
    By using integration by parts, we get
    \begin{equation*}
        \begin{split}
            & \int_0^t g(t-u, \te) (da_n^{-1}N_n(u) - c(u, \tn)du) \\
            & ~~~~~~~~= (\ani N_n(t) - C(t, \tn)) g(0,\te) + \int_0^t (\ani N_n(u) - C(u, \tn))  g'(t-u, \te) du\\
        \end{split}
    \end{equation*}
    where \(g'\) denotes the derivative of \(g\) with respect to \(t\). The claim can then be deduced easily from Lemma \ref{lem1}(a) and uniform boundedness of all functions involved.
\end{proof}

In order to prove the remaining two claims in Lemma \ref{lem1}, we need to first establish the consistency of \(\thn\) as an estimator for \(\tz\). For this proof we will use Theorem 5.9 in \cite{vand1998}, with the following additional notation of
\begin{equation*}
    \begin{split}
        s_n(\te) = & \int_0^1 \frac{\pt \mu(t,\te) + \int_0^t \pt g(t-u,\te)c(u,\tn)du}{\mu(t,\te) + \int_0^t g(t-u,\te)c(u,\tn)du} c(t,\tn) dt \\
        & - \int_0^1 \left(\pt \mu(t,\te) + \int_0^t \pt g(t-u,\te)c(u,\tn)du\right)dt
    \end{split}
\end{equation*}
and
\begin{equation*}
    \begin{split}
        s_0(\te) = & \int_0^1 \frac{\pt \mu(t,\te) + \int_0^t \pt g(t-u,\te)c(u,\tz)du}{\mu(t,\te) + \int_0^t g(t-u,\te)c(u,\tz)du} c(t,\tz) dt \\
        & - \int_0^1 \left(\pt \mu(t,\te) + \int_0^t \pt g(t-u,\te)c(u,\tz)du\right)dt.
    \end{split}
\end{equation*}
Recalling that $S_n(\hat \te_n) = 0$ according to (\ref{propSn}), using the afore-mentioned Theorem 5.9 boils down to showing

\begin{itemize}
    \item[(i)] $\inf_{\{\te:d(\te, \tz) \geq \ve\}} ||s_0(\te)|| > 0 = s_0(\te_0) \text{ for all } \ve > 0,$
    \item[(ii)] $\sup_{\te \in \Theta} ||\ani S_n(\te) - s_0(\te)|| \pn 0$.
\end{itemize}

While (i) can be proved exactly as in \cite{chehal2013}, we will give a proof for (ii) in two steps. First, we will show \(\sup_{\te \in \Theta} \left| \left| s_n(\te) - s_0(\te) \right| \right| \longrightarrow 0\), for which we write
\begin{equation}
    \begin{split}
        &\left| \left|s_n(\theta) - s_0(\theta)\right| \right| \le  \int_0^1 \int_0^t \left|\left|\pt g(t-u,\te) \right|\right|\left| c(u,\tn) - c(u,\tz) \right| du dt \\
        &~~~~~~~~~~~  +\int_0^1 \left|\left|\frac{\partial_{\theta} \mu(t,\theta) + \int_0^t \partial_{\theta} g(t-u, \theta) c(u,\theta_n) du}{\mu(t,\theta) + \int_0^t g(t-u, \theta) c(u,\theta_n) du} c(t,\theta_n) \right. \right. \\
        &~~~~~~~~~~~- \left. \left. \frac{\partial_{\theta} \mu(t,\theta) + \int_0^t \partial_{\theta} g(t-u, \theta) c(u,\theta_0) du}{\mu(t,\theta) + \int_0^t g(t-u, \theta) c(u,\theta_0) du} c(t,\theta_0) \right| \right|  dt. \label{term1}
    \end{split}
\end{equation}
The first term on the right hand side is dealt with using Lemma \ref{hconv} and uniform boundedness of $\pt g$, while the second term is split up into
\begin{equation*}
    \begin{split}
        & \int_0^1 \left| \left| \frac{\partial_{\theta} \mu(t,\theta) + \int_0^t \partial_{\theta} g(t-u, \theta) c(u,\theta_n) du}{\mu(t,\theta) + \int_0^t g(t-u, \theta) c(u,\theta_n) du} - \frac{\partial_{\theta} \mu(t,\theta) + \int_0^t \partial_{\theta} g(t-u, \theta) c(u,\theta_0) du}{\mu(t,\theta) + \int_0^t g(t-u, \theta) c(u,\theta_0) du} \right| \right| c(t,\theta_n) dt \\
        & ~~~~~~~~+ \int_0^1  \left| \left|\frac{\partial_{\theta} \mu(t,\theta) + \int_0^t \partial_{\theta} g(t-u, \theta) c(u,\theta_0) du}{\mu(t,\theta) + \int_0^t g(t-u, \theta) c(u,\theta_0) du} \right| \right| \left|c(t,\theta_n) - c(t,\theta_0) \right| dt.
\end{split}
\end{equation*}
To bound the first summand we use Corollary \ref{gconv} as well as the uniform boundedness from below of $\mu$ in order to deduce that the difference of the fractions converges to zero uniformly in $\te \in \Te$. Uniform convergence to zero of the entire term then follows from uniform boundedness of $c$. The second summand is treated similarly to the first term on the right hand side of (\ref{term1}).

To finish the proof of (ii) we have to show \(\sup_{\te \in \Theta} \left| \left| a_n^{-1}S_n(\te) - s_n(\te) \right| \right| \pn 0\). We will start with proving the convergence for a fixed \(\te \in \Theta\). Clearly,
\begin{equation*}
    \begin{split}
        &\left| \left| \ani S_n(\te) - s_n(\te)\right|\right| \\
        & ~~\le \left| \left| \int_0^1 \frac{\pt \mu(t,\te) + \int_0^t \pt g(t-u,\te)d\ani N_n(u)}{\mu(t,\te) + \int_0^t g(t-u,\te)d \ani N_n(u)} d \ani N_n(t) \right. \right.\\
        &~~~~~~~~- \left. \left. \int_0^1 \frac{\pt \mu(t,\te) + \int_0^t \pt g(t-u,\te)c(u,\tn)du}{\mu(t,\te) + \int_0^t g(t-u,\te)c(u,\tn)du} c(t,\tn) dt \right| \right| \\
        & ~~~~+  \int_0^1 \left| \left| \int_0^t \pt g(t-u,\te)c(u,\tn)du - \int_0^t \pt g(t-u,\te)d \ani N_n(u)\right| \right|  dt.
    \end{split}
\end{equation*}
All summands converge to zero in view of a repeated application of Corollary \ref{lambdaconv}. 
%
%
%
To prove that this convergence is uniform we can follow the same steps as in \cite{chehal2013}. The only difference is that we additionally need to show that for every $\eps> 0$ there exists $\de > 0$ such that $\| \te_1 - \te_2 \| < \de$ implies $\|s_n(\te_1) - s_n(\te_2)\| < \ve$. This property, however, follows easily from $\sup_{\te \in \Theta} \left| \left| s_n(\te) - s_0(\te) \right| \right| \longrightarrow 0$ and the fact that such a claim already holds for $s_0$.

To finish the proof of Lemma \ref{lem1} we recall (\ref{propSn}). It is clear that Lemma \ref{lem1}(b) follows once \(\anis S_n(\tn) \tol N(0, \mI(\tz))\) and Lemma \ref{lem1}(c) have been established. For the latter, note that \(\sup_{\te \in \Theta} \left| \left| \ani \pt S_n(\te) - \pt s_0(\te) \right| \right| \pn 0\) can be established in the same way as (ii) above. \(\pt s_0(\tz) = -\mI(\tz)\) and \(\Wte_n \conv \tz\) together with continuity of $\pt s_0$ then give the claim. Finally, the other claim follows as in Theorem 2 of \cite{chehal2013}, replacing \(\tz\) by \(\tn\) up until the definition of \(\langle \tilde{M} \rangle_n (t)\). Then \(\tilde{M}_n(t)\) converges to \(v(t)\) without changing the definition of the latter.

\subsection{Proof of Theorem \ref{thm1}} \label{subsect:pth1}
As a first step in the proof, we will give a preliminary comment from which several simplifications follow. Some of these are direct, others need more attention and will be treated in several lemmas.

Suppose that a decomposition of the form
\begin{equation} \label{simpldec}
    \sqrt{a_n} G_n(u) = \ga_n F_n(u) + H_n(u)
\end{equation}
with $\ga_n \pn 1$ and $\vert \vert H_n \vert \vert \pn 0$ holds. Then, using \(a_n \vert \vert G_n \vert \vert^2 = \ga_n^2 \vert \vert F_n \vert \vert^2 +\vert \vert H_n \vert \vert^2 + 2 \ga_n \langle F_n, H_n \rangle\), as well as $\ga_n^2 \pn 1$, $\vert \vert H_n \vert \vert^2 \pn 0$ and \(\langle F_n, H_n \rangle \le \vert \vert F_n \vert \vert \ \vert \vert H_n \vert \vert\) plus Slutsky's lemma and the continuous mapping theorem, it is obviously enough to prove weak convergence of $\vert \vert F_n \vert \vert$ towards a limiting distribution with the desired properties in order to deduce the claim for $\sqrt{a_n} \vert \vert G_n \vert \vert$.

We use (\ref{simpldec}) for two quick simplifications before we need to get more sophisticated. On one hand, Lemma \ref{lem1}(a) allows to replace
\begin{equation*}
    \sqrt{a_n} G_n(u) = \sqrt{a_n} \left(L_n(u,\hat \te_n) - L(u) \right) = \frac{\sqrt{a_n}}{N_n(1)} \sum_{i \ge 1} \left(k_n(u, \hat \te_n, t_{i-1}, t_i) - L(u) \right) 1_{\{t_i \le 1\}}
\end{equation*}
with \(\left(\sqrt{a_n}/\ka_n\right) \sum_{i \ge 1} \left(k_n(u, \hat \te_n, t_{i-1}, t_i) - L(u) \right) 1_{\{t_i \le 1\}}\). Second, using the uniform boundedness of $g_n$ and $L$ as well as integrability of $\be$ and $a_n \to \infty$ we can equivalently discuss
\begin{equation} \label{simpl1}
    \frac{\sqrt{a_n}}{\ka_n} \sum_{i \ge 1} \left(k_n(u, \hat \te_n, t_{i-1}, t_i) - L(u) \right) 1_{\{t_{i-1} \le 1\}}
\end{equation}
instead, which later allows for a usage of martingale methods.

We will now state several decompositions of the random function in (\ref{simpl1}), and to keep the notation readable we will provide these in the case where $\Te$ is one-dimensional. The extension to an arbitrary dimension $d$ bears no additional difficulty. We first write
\begin{equation} 
    \begin{split}
        &k_n(u, \hat \te_n, t_{i-1}, t_i) - L(u) \label{dec1} \\ 
        =&\left(k_n(u, \hat \te_n, t_{i-1}, t_i) - k_n(u, \te_n, t_{i-1}, t_i) \right) + \left(k_n(u, \te_n, t_{i-1}, t_i) - L(u) \right), 
    \end{split}
\end{equation}
where the first summand represents the bias due to the estimation of $\te_n$ while the second one can essentially be interpreted as the usual randomness of having to estimate a Laplace transform from i.i.d.\ observations. To provide an intuition for the latter interpretation, setting \(\fF_t = \fF_t^n = \si \left(N_n(s) ~\middle|~ s \le t \right)\) we have that the conditional distribution of \(\int_{t_{i-1}}^{t_i} a_n \mu_n(t,\te_n) dt = \La_n(t_i,\te_n) - \La_n(t_{i-1},\te_n)\) given $\fF_{t_{i-1}}$ is the standard exponential.

We will now provide a series of simplifications for the first summand in (\ref{dec1}) which we can decompose as
\begin{equation} \label{dec2}
    \begin{split}
        & k_n(u, \hat \te_n, t_{i-1}, t_i) - k_n(u, \te_n, t_{i-1}, t_i) = \int_{\te_n}^{\hat \te_n} q_n(u,r,t_{i-1}, t_i) dr\\
        & ~~= \left(\hat \te_n - \te_n\right) q_n(u,\te_n,t_{i-1}, t_i) + \int_{\te_n}^{\hat \te_n} \int_{\te_n}^r  \Wq_n(u,z,t_{i-1}, t_i) dz dr \\
        & ~~= \left(\hat \te_n - \te_n\right) \E\left(q_n(u,\te_n,t_{i-1}, t_i) \middle | \fF_{t_{i-1}} \right) \\
        & ~~~~~~~~+ \left(\hat \te_n - \te_n\right) \left \{q_n(u,\te_n,t_{i-1}, t_i) -\E\left(q_n(u,\te_n,t_{i-1}, t_i) \middle | \fF_{t_{i-1}} \right) \right\} \\
        & ~~~~~~~~+ \int_{\te_n}^{\hat \te_n} \int_{\te_n}^r \Wq_n(u,z,t_{i-1}, t_i) dz dr
    \end{split}
\end{equation}
where we have set
\begin{equation*}
    q_n(u,\te,r,s) = \frac{\partial}{\partial \te} k_n(u,\te,r,s) \quad \text{and} \quad \Wq_n(u,\te,r,s) = \frac{\partial^2}{\partial \te^2} k_n(u,\te,r,s).
\end{equation*}
Using (\ref{simpldec}), the next lemma proves that the latter two terms in (\ref{dec2}) do not contribute in the asymptotics. It proof, as well for several other results, will be given later. 

\begin{lemma} \label{lem2}
    Let
    \begin{equation*}
        \begin{split}
            \overline G_n(u) = & \frac{\sqrt{a_n}}{\ka_n} \left(\hat \te_n - \te_n\right) \sum_{i \ge 1} \left \{q_n(u,\te_n,t_{i-1}, t_i) -\E\left(q_n(u,\te_n,t_{i-1}, t_i) \middle | \fF_{t_{i-1}} \right) \right\} 1_{\{t_{i-1} \le 1\}} \\
            & ~~~~+ \frac{\sqrt{a_n}}{\ka_n}  \sum_{i \ge 1} \int_{\te_n}^{\hat \te_n} \int_{\te_n}^r \Wq_n(u,z,t_{i-1}, t_i) dz dr  1_{\{t_{i-1} \le 1\}} \\
            =: & \sqrt{a_n} \left(\hat \te_n - \te_n\right)\overline G_n^{(1)}(u) + \overline G_n^{(2)}(u).
        \end{split}
    \end{equation*}
    Then under Condition \ref{condmain}, \(\vert \vert \overline G_n \vert \vert^2 \pn 0\).
\end{lemma}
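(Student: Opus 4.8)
The plan is to split the statistic as in the lemma, $\overline{G}_n = \sqrt{a_n}(\hat\te_n-\te_n)\,\overline{G}_n^{(1)} + \overline{G}_n^{(2)}$, and to treat the two pieces separately via $\|\overline{G}_n\| \le |\sqrt{a_n}(\hat\te_n-\te_n)|\,\|\overline{G}_n^{(1)}\| + \|\overline{G}_n^{(2)}\|$, showing $\|\overline{G}_n^{(1)}\| \pn 0$ and $\|\overline{G}_n^{(2)}\| \pn 0$; since $\sqrt{a_n}(\hat\te_n-\te_n) = O_P(1)$ by Lemma \ref{lem1}(b), this yields $\|\overline{G}_n\|^2 \pn 0$. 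Writing $E_i = \La_n(t_i,\te_n) - \La_n(t_{i-1},\te_n) = \int_{t_{i-1}}^{t_i} a_n\mu(t,\te_n)\,dt$, which is standard exponential given $\fF_{t_{i-1}}$, I would first record the elementary inequalities $\sup_{x\ge0} x e^{-ax} = (ea)^{-1}$ and $\sup_{x\ge0} x^2 e^{-ax} = 4(ea)^{-2}$, which will make every bound uniform in $u$. Combined with $\int_{t_{i-1}}^{t_i} a_n\,dt \le E_i/\underline{\mu}$ (where $\underline{\mu} := \inf_{t,\te}\mu(t,\te) > 0$ by Condition \ref{condmain}(c)) and the identity $q_n(u,\te,r,s) = -u\bigl(\int_r^s a_n\partial_\te\mu(t,\te)\,dt\bigr)k_n(u,\te,r,s)$, these give the pointwise bound $|q_n(u,\te_n,t_{i-1},t_i)| \le (\bar\mu'/\underline{\mu})\, u E_i e^{-uE_i} \le \bar\mu'(\underline{\mu}e)^{-1}$, a constant independent of $u$ and $i$, where $\bar\mu'$ bounds $\|\partial_\te\mu\|$. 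Likewise, from $\Wq_n = u^2\bigl(\int_r^s a_n\partial_\te\mu\,dt\bigr)^2 k_n - u\bigl(\int_r^s a_n\partial_\te^2\mu\,dt\bigr)k_n$ and $\int_r^s a_n\mu(t,z)\,dt \ge \underline{\mu}\int_r^s a_n\,dt$ for every $z\in\Te$, one obtains $\sup_z |\Wq_n(u,z,t_{i-1},t_i)| \le M$ for a constant $M$, uniformly in $u$, $i$, and $z$ on the segment joining $\te_n$ and $\hat\te_n$ (which eventually lies in $\Te$, as $\te_n\to\te_0$, $\hat\te_n\pn\te_0$ by Lemma \ref{lem1}(b), and $\te_0$ has a ball–neighbourhood inside $\Te$ by Condition \ref{condmain}(a)).

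For the first piece, the key point is that $\overline{G}_n^{(1)}(u) = \ka_n^{-1}\sum_{i\ge1}\bigl\{q_n(u,\te_n,t_{i-1},t_i) - \E\bigl(q_n(u,\te_n,t_{i-1},t_i)\mid\fF_{t_{i-1}}\bigr)\bigr\}1_{\{t_{i-1}\le1\}}$ is a normalized sum of conditionally centred terms. Since $1_{\{t_{i-1}\le1\}}$ is $\fF_{t_{i-1}}$-measurable and $q_n(u,\te_n,t_{i-1},t_i)$ is $\fF_{t_i}$-measurable, conditioning on $\fF_{t_{j-1}}$ for $i<j$ kills all cross terms, so that $\E[\overline{G}_n^{(1)}(u)^2] = \ka_n^{-2}\sum_{i\ge1}\E\bigl[\mathrm{Var}\bigl(q_n(u,\te_n,t_{i-1},t_i)\mid\fF_{t_{i-1}}\bigr)1_{\{t_{i-1}\le1\}}\bigr] \le \ka_n^{-2}\bigl(\bar\mu'(\underline{\mu}e)^{-1}\bigr)^2\,\E[N_n(1)+1]$, using the pointwise bound on $q_n$ and $\sum_{i\ge1}1_{\{t_{i-1}\le1\}} = N_n(1)+1$. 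By Lemma \ref{lem1}(a), $\E[N_n(1)+1]/\ka_n\to1$, so this is $O(\ka_n^{-1})$ uniformly in $u$; integrating against the integrable weight $\be$ gives $\E\|\overline{G}_n^{(1)}\|^2 \le K\ka_n^{-1}\int_0^\infty\be(u)\,du \to 0$, hence $\|\overline{G}_n^{(1)}\|\pn0$, and therefore $|\sqrt{a_n}(\hat\te_n-\te_n)|\,\|\overline{G}_n^{(1)}\| = O_P(1)\cdot o_P(1)\pn0$.

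For the second piece a purely deterministic estimate suffices: $\bigl|\int_{\te_n}^{\hat\te_n}\!\int_{\te_n}^r \Wq_n(u,z,t_{i-1},t_i)\,dz\,dr\bigr| \le \tfrac12 (\hat\te_n-\te_n)^2 M$ uniformly in $u$ and $i$, so $|\overline{G}_n^{(2)}(u)| \le \tfrac{M}{2}\sqrt{a_n}(\hat\te_n-\te_n)^2\,\ka_n^{-1}(N_n(1)+1)$, which is free of $u$; hence $\|\overline{G}_n^{(2)}\|^2 \le \bigl(\tfrac{M}{2}\sqrt{a_n}(\hat\te_n-\te_n)^2\,(N_n(1)+1)/\ka_n\bigr)^2\int_0^\infty\be(u)\,du$. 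Here $\sqrt{a_n}(\hat\te_n-\te_n)^2 = a_n^{-1/2}\bigl(\sqrt{a_n}(\hat\te_n-\te_n)\bigr)^2 = a_n^{-1/2}O_P(1)\pn0$ by Lemma \ref{lem1}(b), while $(N_n(1)+1)/\ka_n\pn1$ by Lemma \ref{lem1}(a) and $\int_0^\infty\be<\infty$, so $\|\overline{G}_n^{(2)}\|^2\pn0$. Combining the two bounds gives $\|\overline{G}_n\|^2\pn0$. (In dimension $d>1$ one reads $(\hat\te_n-\te_n)^2$ as $\|\hat\te_n-\te_n\|^2$ and the linear term as an inner product, exactly as the text does around \eqref{dec2}; no new difficulty arises.)

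The only genuinely substantive step — everything else being bookkeeping — is the use of the conditional-centring (martingale-difference) structure of $\overline{G}_n^{(1)}$ to gain the extra factor $\ka_n^{-1/2}$ in its $\hH$-norm: a crude pointwise bound on $q_n$ alone would only give $\|\overline{G}_n^{(1)}\| = O_P(1)$, which, multiplied by $\sqrt{a_n}(\hat\te_n-\te_n) = O_P(1)$, would not be negligible. Beyond that, the elementary inequalities $xe^{-ax}\le(ea)^{-1}$ and $x^2e^{-ax}\le4(ea)^{-2}$ make all estimates uniform in $u$, so the weight $\be$ contributes only through $\int\be<\infty$, and the improvement $\sqrt{a_n}(\hat\te_n-\te_n)^2 = o_P(1)$ (rather than merely $O_P(1)$) is what forces the second-order remainder $\overline{G}_n^{(2)}$ to vanish.
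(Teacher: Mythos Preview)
Your proof is correct and follows essentially the same route as the paper's: both treat $\overline{G}_n^{(1)}$ via the martingale-difference structure together with a uniform bound $|q_n|\le K$ obtained from $x e^{-x}\le C$ and the fact that $\mu$ is bounded below while $\partial_\te\mu$ is bounded above, and both dispose of $\overline{G}_n^{(2)}$ by bounding $|\Wq_n|\le K$ via $x^2 e^{-x}\le C$ and using $\sqrt{a_n}(\hat\te_n-\te_n)^2=o_P(1)$ together with $(N_n(1)+1)/\ka_n\pn1$. Your write-up is a bit more explicit about the constants and about why the segment between $\te_n$ and $\hat\te_n$ eventually lies in $\Te$, but there is no substantive difference in strategy.
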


Following \eqref{simpl1}, \eqref{dec1} and Lemma \ref{lem2} we thus only have to establish the asymptotics associated with the random function
\begin{equation} \label{simpl2}
    \begin{split}
        &\frac{\sqrt{a_n}}{\ka_n} \sum_{i \ge 1} \left\{ \left(\hat \te_n - \te_n\right) \E\left(q_n(u,\te_n,t_{i-1}, t_i) \middle | \fF_{t_{i-1}} \right) \right. \\
        & ~~~~~~~~+ \left. \left(k_n(u, \te_n, t_{i-1}, t_i) - L(u) \right)\right\} 1_{\{t_{i-1} \le 1\}}.
    \end{split}
\end{equation}
We provide two further auxiliary results regarding the first summand above.

\begin{lemma} \label{lem3}
    Set
    \begin{equation*}
        h_n(u,\te_n,t_{i-1}) = \E\left(q_n(u,\te_n,t_{i-1}, t_i) \middle | \fF_{t_{i-1}} \right) \quad \text{and} \quad h(u,\te_0,t) = -\frac{u}{(u+1)^2} \frac{\frac{\partial}{\partial \te} \mu(t,\te_0)}{C(1,\te_0)}
    \end{equation*}
    as well as
    \begin{equation*}
        \begin{split}
            \widehat G_n(u) = & a_n^{1/2} \left(\hat \te_n - \te_n\right)  \left(\frac{1}{\ka_n} \sum_{i \ge 1} h_n(u,\te_n,t_{i-1})  1_{\{t_{i-1} \le 1\}} - \int_0^1 h(u,\te_0,t) dt  \right) \\
            =: & a_n^{1/2} \left(\hat \te_n - \te_n\right) \widehat G_n^{(1)}(u).
        \end{split}
    \end{equation*}
    Then under Condition \ref{condmain}, \(\vert \vert \widehat G_n \vert \vert^2 \pn 0\).
\end{lemma}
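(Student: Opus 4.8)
\textbf{Proof proposal for Lemma \ref{lem3}.}

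The plan is to exploit the explicit exponential form of $k_n$ together with the fact that, for an inhomogeneous Poisson process, the residual $\xi_i := \La_n(t_i,\te_n) - \La_n(t_{i-1},\te_n) = a_n\int_{t_{i-1}}^{t_i}\mu(s,\te_n)\,ds$ is, conditionally on $\fF_{t_{i-1}}$ (on the event $\{t_{i-1}\le1\}$), standard exponentially distributed. Since $\sqrt{a_n}(\hat\te_n-\te_n)=O_P(1)$ by Lemma \ref{lem1}(b), it is enough to prove $\vert\vert\widehat G_n^{(1)}\vert\vert\pn0$, and for notational ease I describe the one-dimensional case, the extension to $d\ge2$ being componentwise. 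First I would make $h_n$ explicit. From $k_n(u,\te,r,s)=\exp(-ua_n(C(s,\te)-C(r,\te)))$ one gets $q_n(u,\te_n,t_{i-1},t_i)=-u\,\bigl(a_n\int_{t_{i-1}}^{t_i}\pt\mu(s,\te_n)\,ds\bigr)e^{-u\xi_i}$. Using $\inf_{t,\te}\mu(t,\te)>0$ there is the deterministic bound $t_i-t_{i-1}\le K\xi_i/a_n$, and a first-order Taylor expansion of $\pt\mu(\cdot,\te_n)$ and $\mu(\cdot,\te_n)$ around $t_{i-1}$, together with the uniform boundedness of the first and second derivatives from Condition \ref{condmain}(c), yields
\begin{equation*}
    \Bigl\vert a_n\int_{t_{i-1}}^{t_i}\pt\mu(s,\te_n)\,ds - \frac{\pt\mu(t_{i-1},\te_n)}{\mu(t_{i-1},\te_n)}\,\xi_i\Bigr\vert \le \frac{K}{a_n}\,\xi_i^2,
\end{equation*}
hence $\bigl\vert q_n(u,\te_n,t_{i-1},t_i)+u\,\tfrac{\pt\mu(t_{i-1},\te_n)}{\mu(t_{i-1},\te_n)}\,\xi_i e^{-u\xi_i}\bigr\vert\le\tfrac{K}{a_n}\,u\,\xi_i^2 e^{-u\xi_i}$. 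Taking $\E(\cdot\mid\fF_{t_{i-1}})$ and using $\E(\xi e^{-u\xi})=(u+1)^{-2}$ and $\E(\xi^2 e^{-u\xi})=2(u+1)^{-3}$ for a standard exponential random variable $\xi$, this produces
\begin{equation*}
    h_n(u,\te_n,t_{i-1}) = -\frac{u}{(u+1)^2}\,\frac{\pt\mu(t_{i-1},\te_n)}{\mu(t_{i-1},\te_n)} + r_n(u,t_{i-1}), \qquad \sup_{u\ge0,\;i\ge1}\vert r_n(u,t_{i-1})\vert\le\frac{K}{a_n}.
\end{equation*}

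Next I would insert this into $\widehat G_n^{(1)}$. Writing $\psi(t,\te)=\pt\mu(t,\te)/\mu(t,\te)$, which by Condition \ref{condmain}(c) is bounded and has a bounded, continuous $t$-derivative uniformly in $\te$, and recalling $\int_0^1 h(u,\te_0,t)\,dt=-\tfrac{u}{(u+1)^2}C(1,\te_0)^{-1}\int_0^1\pt\mu(t,\te_0)\,dt$, one obtains, with $Z_n:=\ka_n^{-1}\sum_{i\ge1}\psi(t_{i-1},\te_n)1_{\{t_{i-1}\le1\}}-C(1,\te_0)^{-1}\int_0^1\pt\mu(t,\te_0)\,dt$,
\begin{equation*}
    \widehat G_n^{(1)}(u) = -\frac{u}{(u+1)^2}\,Z_n + \frac1{\ka_n}\sum_{i\ge1}r_n(u,t_{i-1})\,1_{\{t_{i-1}\le1\}}.
\end{equation*}
The last sum has $\hH$-norm at most $Ka_n^{-1}(N_n(1)+1)/\ka_n=O_P(a_n^{-1})$ by Lemma \ref{lem1}(a) and integrability of $\be$, and since $u\mapsto u/(u+1)^2$ lies in $\hH$, it then remains only to show $Z_n\pn0$. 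For this, $\ka_n^{-1}\psi(0,\te_n)\to0$ and $\ka_n=a_nC(1,\te_0)$ reduce matters to $\int_0^1\psi(t,\te_n)\,d\ani N_n(t)\pn\int_0^1\pt\mu(t,\te_0)\,dt$; integration by parts rewrites the left side as $\psi(1,\te_n)\ani N_n(1)-\int_0^1\ani N_n(t)\,\partial_t\psi(t,\te_n)\,dt$, which by $\sup_t\vert\ani N_n(t)-C(t,\te_0)\vert\pn0$ (Lemma \ref{lem1}(a)), $\te_n\to\te_0$, continuity of $\psi,\partial_t\psi$ in $\te$ and dominated convergence converges in probability to $\psi(1,\te_0)C(1,\te_0)-\int_0^1 C(t,\te_0)\partial_t\psi(t,\te_0)\,dt=\int_0^1\psi(t,\te_0)\mu(t,\te_0)\,dt=\int_0^1\pt\mu(t,\te_0)\,dt$. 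Hence $\vert\vert\widehat G_n^{(1)}\vert\vert\le\vert Z_n\vert\,\vert\vert u/(u+1)^2\vert\vert+O_P(a_n^{-1})\pn0$, and $\vert\vert\widehat G_n\vert\vert=\vert\sqrt{a_n}(\hat\te_n-\te_n)\vert\cdot\vert\vert\widehat G_n^{(1)}\vert\vert\pn0$ as a bounded-in-probability sequence times one converging to $0$ in probability, which gives $\vert\vert\widehat G_n\vert\vert^2\pn0$.

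The step I expect to be the main obstacle is the explicit expansion of $h_n$: the remainders must be controlled uniformly in $u\ge0$ and in the random index $i$, so that they survive both the integration against $\be$ over $u$ and the summation over the (random, of order $\ka_n$) events; this is what forces one to keep the remainders as explicit powers of $\xi_i$ and to invoke exact moments of the standard exponential, and it is precisely this distributional input, together with the fact that $\la_n(t,\te_n)=a_n\mu(t,\te_n)$ is deterministic (so that $U_n$ collapses to $\pt\mu/\mu$), that makes the argument tied to the inhomogeneous Poisson case; a genuinely self-exciting analogue would require a substitute for it. A minor additional technicality is the last, incomplete interval $[t_{N_n(1)},t_{N_n(1)+1}]$ collected by the indicator $1_{\{t_{i-1}\le1\}}$ in (\ref{simpl1}): it contributes a single term and is handled either by a crude bound or by regarding the process (and $\mu$) as extended slightly beyond $1$.
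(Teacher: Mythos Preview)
Your argument is correct and leads to the same conclusion, but it is organized quite differently from the paper's proof. The paper splits $\widehat G_n^{(1)}=\widehat G_n^{(2)}+\widehat G_n^{(3)}+\widehat G_n^{(4)}$, where $\widehat G_n^{(2)}$ replaces the empirical sum $\ka_n^{-1}\sum_i h_n(u,\te_n,t_{i-1})1_{\{t_{i-1}\le1\}}$ by the integral $\ka_n^{-1}\int_0^1 h_n(u,\te_n,t)\,a_n\mu(t,\te_n)\,dt$ (handled by a Poisson variance computation), $\widehat G_n^{(3)}$ passes from $h_n$ to $h$ at the parameter $\te_n$ via an explicit integral representation of $h_n$ obtained from the conditional density \eqref{density} followed by a change of variables and dominated convergence, and $\widehat G_n^{(4)}$ finally shifts $\te_n$ to $\te_0$. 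You instead expand $q_n$ directly via a first-order Taylor expansion in $t$ around $t_{i-1}$, use exact exponential moments to evaluate the conditional expectation, and thereby obtain $h_n(u,\te_n,t_{i-1})=-\frac{u}{(u+1)^2}\psi(t_{i-1},\te_n)+O(a_n^{-1})$ with a remainder uniform in $u$ and $i$. The decisive simplification is that the $u$-dependence factors out completely as $u/(u+1)^2\in\hH$, so the whole problem collapses to showing that a single real-valued random variable $Z_n$ converges to $0$ in probability, which you then obtain from Lemma \ref{lem1}(a) by integration by parts. This is more elementary and more transparent than the paper's route; the price is that you need the mixed second partial $\partial_t\partial_\te\mu$ (for the Taylor remainder of $\pt\mu$) and $\partial_t\mu$ bounded, both granted by Condition \ref{condmain}(c). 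The paper's decomposition, on the other hand, keeps the empirical-sum-versus-integral step and the $h_n\to h$ step separate, which makes it closer in spirit to the analogous steps elsewhere in the proof and might generalize more readily beyond the Poisson case.
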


For the other result we utilize the representation
\begin{equation*}
    0 = a_n^{-1/2} S_n(\hat \te_n) = a_n^{-1/2} S_n(\te_n) + a_n^{-1} \frac{\partial}{\partial \te} S_n(\Wte_n) a_n^{1/2} \left(\hat \te_n - \te_n\right)
\end{equation*}
which was introduced prior the statement of Lemma \ref{lem1}. Using part (c) of that lemma, we can, up to a null set, write
\begin{equation*}
    a_n^{1/2} \left(\hat \te_n - \te_n\right) = \frac{a_n^{-1/2} S_n(\te_n) }{- a_n^{-1} \frac{\partial}{\partial \te} S_n(\Wte_n)}.
\end{equation*}
According to (\ref{simpldec}), Lemma \ref{lem1}(c), Lemma \ref{lem3} and the previous simplifications leading to (\ref{simpl2}) it remains to establish weak convergence of $\vert \vert \widetilde G_n \vert \vert$ with
\begin{equation} \label{deftil}
    \begin{split}
        \widetilde G_n(u) = & a_n^{-1/2} \left(\frac{S_n(\te_n) }{\iI(\te_0)} \int_0^1 h(u,\te_0,t) dt \right. \\
        & ~~~~~~~~ \left. + \frac{1}{C(1,\te_0)} \sum_{i \ge 1}\left(k_n(u, \te_n, t_{i-1}, t_i) - L(u) \right) 1_{\{t_{i-1} \le 1\}} \right).
    \end{split}
\end{equation}
In fact, we will only prove that the $\hH$-valued random variable $\widetilde G_n(\cdot)$ converges weakly to an $\hH$-valued Gaussian limiting variable $G_{\te_0}(\cdot)$, as the result then follows from an application of the continuous mapping theorem. For the proof of the afore-mentioned weak convergence in a Hilbert space we will rely in particular on Theorem 1.8.4 in \cite{vanwel1996} which contains two sufficient conditions.

The first condition which needs to be checked is asymptotic finite-dimensionality. Denoting an orthonormal basis of $\hH$ by $\{ e_j ~|~ j \in J\}$ this amounts to showing that for every $\eps, \de > 0$ there exists a finite set $I \subset J$ such that \(\limsup_{n \to \infty} \P\left( \sum_{j \notin I} \langle \WG_n, e_j \rangle^2 > \de \right) < \eps\).

Clearly it is sufficient to prove this property for the two summands in \eqref{deftil} separately. For the first summand it is relatively simple. First, utilizing Lemma \ref{lem1}(c) and (\ref{simpldec}) we may equivalently discuss \(a_n^{1/2} \left(\hat \te_n - \te_n\right) \int_0^1 h(u,\te_0,t) dt\). 
Now, Lemma \ref{lem1}(b) implies tightness of the sequence $\sqrt{a_n} \left(\hat \te_n - \te_n\right)$, i.e.\ the existence of $A > 0$ such that \(\limsup_{n \to \infty} \P\left(\left|\sqrt{a_n} \left(\hat \te_n - \te_n\right) \right| > A \right) \le \eps/2\). 
Hence only the existence of $I$ such that
\begin{equation*}
    \P\left(\sum_{j \notin I} \left \langle \int_0^1 h(u,\te_0,t) dt, e_j \right \rangle^2  > \frac{\de}{A^2} \right) \le \frac{\eps}2
\end{equation*}
needs to be shown, and this is an easy consequence of Bessel's inequality since Condition \ref{condmain}(c) implies boundedness of $\vert \vert \int_0^1 h(u,\te_0,t) dt \vert \vert^2$.

The discussion of the other summand in \eqref{deftil} is more subtle. Note first that \(\E\left(k_n(u, \te_n, t_{i-1}, t_{i}) - L(u) \middle| \fF_{t_{i-1}} \right) =0\) holds for every $i$, utilizing that the $\fF_{t_{i}-1}$-conditional distribution of $k_n(u, \te_n, t_{i-1}, t_{i})$ equals the one of $\exp(-uX)$ for $X \sim \exp(1)$ and is independent of $\fF_{t_{i}-1}$. Hence, by Fubini's theorem we obtain
\begin{equation} \label{fub}
    \begin{split}
        & \E \left( \left \langle k_n(u, \te_n, t_{i-1}, t_i) - L(\cdot), e_j \right \rangle \middle| \fF_{t_{i-1}} \right) \\
        & ~~= \int_0^\infty \E\left(k_n(u, \te_n, t_{i-1}, t_i) - L(u) \middle| \fF_{t_{i-1}} \right) e_j(u) \be(u) du =0,
    \end{split}
\end{equation}
and for any choice of $i_1 \neq i_2$ we thus have
\begin{equation*}
    \E\left(\left \langle k_n(\cdot, \te_n, t_{i_1-1}, t_{i_1}) - L(\cdot), e_j \right \rangle 1_{\{t_{{i_1}-1} \le 1\}} \left \langle k_n(\cdot, \te_n, t_{i_2-1}, t_{i_2}) - L(\cdot), e_j \right \rangle 1_{\{t_{{i_2}-1} \le 1\}} \right) = 0
\end{equation*}
by successive conditioning. To summarize,
\begin{equation*}
    \begin{split}
        &\E \left(\left \langle \frac{a_n^{-1/2}}{C(1,\te_0)} \sum_{i \ge 1}\left(k_n(u, \te_n, t_{i-1}, t_i) - L(u) \right) 1_{\{t_{i-1} \le 1\}}, e_j \right \rangle^2  \right) \\
        & ~~= \frac{a_n^{-1}}{C^2(1,\te_0)} \sum_{i \ge 1} \E \left(\left \langle k_n(u, \te_n, t_{i-1}, t_i) - L(u), e_j \right \rangle^2 1_{\{t_{i-1} \le 1\}} \right) \\
        & ~~= \frac{a_n^{-1}}{C^2(1,\te_0)} \sum_{i \ge 1} \E \left( \E\left(\left \langle k_n(u, \te_n, t_{i-1}, t_i) - L(u), e_j \right \rangle^2 \middle| \fF_{t_{i-1}} \right) 1_{\{t_{i-1} \le 1\}} \right) \\
        & ~~= \frac{a_n^{-1}}{C^2(1,\te_0)} \E\left(\left \langle \exp(-uX) - L(u), e_j(u) \right \rangle^2 \right) \E(N_n(1)+1) \\ \le& K \E\left(\left \langle \exp(-uX) - L(u) , e_j(u) \right \rangle^2 \right)
    \end{split}
\end{equation*}
where we have used Lemma \ref{lem1}(a) and Condition \ref{condmain}(c) in the last step. The claim then follows again from Bessel's inequality, this time in connection with dominated convergence and Markov's inequality, utilizing
\begin{equation*}
    \sum_{j \in J} \E\left(\left \langle \exp(-uX) - L(u), e_j(u) \right \rangle^2 \right) \le \E \left(\left \vert \left \vert \exp(-uX) - L(u) \right \vert \right \vert^2 \right) < \infty.
\end{equation*}

The second condition of Theorem 1.8.4 in \cite{vanwel1996} regards weak convergence of
\begin{equation} \label{Gnf}
    \begin{split}
        \left \langle \widetilde G_n, f \right \rangle = & \frac{a_n^{-1/2} S_n(\te_n) }{\iI(\te_0)} \left \langle\int_0^1 h(u,\te_0,t) dt, f(u) \right \rangle \\
        & ~~~~+ \frac{a_n^{-1/2}}{C(1,\te_0)} \sum_{i \ge 1}\left \langle k_n(u, \te_n, t_{i-1}, t_i) - L(u), f(u) \right \rangle 1_{\{t_{i-1} \le 1\}}.
    \end{split}
\end{equation}
towards $\langle G_{\te_0}, f \rangle$ for each $f \in \hH$. Here, $G_{\te_0}$ is an $\hH$-valued Gaussian random variable. Precisely, we will show the afore-mentioned weak convergence with \(\langle G_{\te_0}, f \rangle \sim \nN\left(0, \si^2(f) \right)\) for \(f \in \hH\), which is enough to deduce Gaussianity of $G_{\te_0}$ and determines its distribution uniquely. Here,
\begin{equation*}
    \begin{split}
        \si^2(f) = & \int_0^1 \left( \left(\frac{\langle\int_0^1 h(u,\te_0,t) dt, f(u)  \rangle}{\iI(\te_0)}\right)^2 \frac{ \left(\frac{\partial}{\partial \te}\mu(s,\te_0) \right)^2}{\mu(s,\te_0)} \right. \\
        & ~~~~+ \left. 2 \frac{\langle\int_0^1 h(u,\te_0,t) dt, f(u)  \rangle}{\iI(\te_0)C(1,\te_0)} \Phi(f) \frac{\partial}{\partial \te} \mu(s,\te_0) + \frac{\De(f)}{C(1,\te_0)^2} \mu(s,\te_0)\right) ds
    \end{split}
\end{equation*}
with \(\Phi(f) =  \int_0^\infty \frac{u}{(1+u)^2} f(u) \be(u)  du\) and
\begin{equation*}
    \De(f) = \int_0^\infty \int_0^\infty \left(\frac 1{1+u+v} -  \frac 1{(1+u)(1+v)} \right) \be(u) f(u) \be(v) f(v) du dv.
\end{equation*}
Hence, let $f \in \hH$ be fixed from now on.

For the proof of the weak convergence stated above we will rely on a martingale central limit theorem provided as Theorem 2.2.13 in \cite{jacpro2012} for which we need some preparation. Loosely speaking, the problem is that both summands on the right hand side of (\ref{Gnf}) can be regarded as end points of martingales but with respect to different filtrations (the first one with respect to $\fF_t$ for every $t \in [0,1]$, the second one with respect to $\fF_{t_{i-1}}$ for every $i$). Hence we are looking for an approximation of $\langle G_{\te_0}, f \rangle$ by an end point of a martingale with respect to the same filtration. For this, we use a ``big blocks, small blocks''-strategy. Let $p \in \N$ and $0 < \varrho < 1/2$ be fixed, and set $\ell_n = \lfloor{a_n^\varrho} \rfloor$, as well as $c_{n,p}(j) = j(p+1)\ell_n$ and $d_{n,p}(j) = j(p+1)\ell_n + p \ell_n$, and recall that one way to prove the weak convergence $Y_n \tol Z$ (for generic random variables) is to show
\begin{align}
&Z_{n,p} \tol Z_p \text{ as } n \to \infty, \text{ for any fixed } p \in \N, \label{aux1} \\
&Z_p \tol Z \text{ as } p \to \infty, \label{aux2}\\
&\lim_{p \to \infty} \limsup_{n \to \infty} \P(|Y_n - Z_{n,p}| \geq \eta) = 0, \text{ for any } \eta > 0, \label{aux3}
\end{align}
for auxiliary random variables $Z_{n,p}$ and $Z_p$. We will begin with a version of (\ref{aux3}) in our setting, for which we recall the definition of $U_n(t,\te)$ in (\ref{defUn}).

\begin{lemma} \label{lem5}
    Let
    \begin{equation*}
        G_{n,p}(f) = \sqrt{\frac{(p+1)\ell_n}{a_n}}\sum_{j=0}^{\lfloor \frac{a_n}{(p+1)\ell_n} \rfloor - 1}  \left(\frac{\langle\int_0^1 h(u,\te_0,t) dt, f(u)  \rangle}{\iI(\te_0)} \xi_{p,j}^n + \frac 1{C(1,\te_0)} \ze_{p,j}^n(f)\right)
    \end{equation*}
    with
    \begin{equation*}
        \begin{split}
            \xi_{p,j}^n & = \frac{1}{\sqrt{(p+1)\ell_n}} \sum_{i \ge 1} \left(U_n(t_{i-1},\te_n) 1_{\{\frac{c_{n,p}(j)}{a_n} < t_{i-1} \le \frac{d_{n,p}(j)}{a_n}\}} \right. \\
            &  ~~~~~~~~~~~~~~~~- \left. \E\left(U_n(t_{i-1},\te_n) 1_{\{\frac{c_{n,p}(j)}{a_n} < t_{i-1} \le \frac{d_{n,p}(j)}{a_n}\}} \middle| \fF_{\frac{c_{n,p}(j)}{a_n}} \right) \right)
        \end{split}
    \end{equation*}
    and
    \begin{equation*}
        \begin{split}
            &\ze_{p,j}^n(f)= \frac{1}{\sqrt{(p+1)\ell_n}} \times \\
            & ~~~~~ \sum_{i \ge 1} \left(\left \langle k_n(u, \te_n, t_{i-1}, t_i) - L(u), f(u) \right \rangle 1_{\{\frac{c_{n,p}(j)}{a_n} < t_{i-1} \le \frac{d_{n,p}(j)}{a_n}, t_i \le \frac{c_{n,p}(j+1)}{a_n}\}} \right. \\
            &~~~~~~- \left. \E\left(\left \langle k_n(u, \te_n, t_{i-1}, t_i) - L(u), f(u) \right \rangle 1_{\{\frac{c_{n,p}(j)}{a_n} < t_{i-1} \le \frac{d_{n,p}(j)}{a_n}, t_i \le \frac{c_{n,p}(j+1)}{a_n}\}} \middle| \fF_{\frac{c_{n,p}(j)}{a_n}} \right) \right)
        \end{split}
    \end{equation*}
    Then, for any $\eta > 0$,
    \begin{equation*}
        \lim_{p \to \infty} \limsup_{n \to \infty} \P\left(\left|\left \langle \widetilde G_n, f \right \rangle -G_{n,p}(f)\right| \geq \eta\right) = 0.
    \end{equation*}
\end{lemma}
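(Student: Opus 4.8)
The plan is to decompose $\langle\widetilde G_n,f\rangle-G_{n,p}(f)$ along the two summands of $\langle\widetilde G_n,f\rangle$ in (\ref{Gnf})--(\ref{deftil}): the one carrying $a_n^{-1/2}S_n(\te_n)$ and the one carrying $\sum_{i}\langle k_n-L,f\rangle1_{\{t_{i-1}\le1\}}$, matching the first against the $\xi_{p,j}^n$-part of $G_{n,p}(f)$ and the second against the $\ze_{p,j}^n(f)$-part, and then to bound each mismatch by a second-moment estimate plus Markov's inequality. I will call $(c_{n,p}(j)/a_n,d_{n,p}(j)/a_n]$ the \emph{$j$-th big block} and $(d_{n,p}(j)/a_n,c_{n,p}(j+1)/a_n]$ the \emph{$j$-th small block}, and I write $E_n\subset(0,1]$ for the union of the small blocks $j=0,\dots,\lfloor a_n/((p+1)\ell_n)\rfloor-1$ together with the leftover tail $(\lfloor a_n/((p+1)\ell_n)\rfloor(p+1)\ell_n/a_n,1]$; by construction $\mathrm{Leb}(E_n)\le(p+1)^{-1}+(p+1)\ell_n/a_n$. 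Two features special to the inhomogeneous Poisson case will be used throughout: since $g\equiv0$, the integrand $U_n(t,\te_n)$ in (\ref{defUn}) is the \emph{deterministic} function $\partial_\te\mu(t,\te_n)/\mu(t,\te_n)$, and $N_n$ is a Poisson process with deterministic compensator, hence has independent increments.

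For the $S_n$-summand I would first rewrite both objects as stochastic integrals against $M_n(\cdot,\te_n)=N_n(\cdot)-\int_0^\cdot a_n\mu(s,\te_n)\,ds$: clearly $a_n^{-1/2}S_n(\te_n)=a_n^{-1/2}\int_{(0,1]}U_n(t,\te_n)\,dM_n(t,\te_n)$, and, because $U_n$ is deterministic and the restriction of $N_n$ to the $j$-th big block is independent of $\fF_{c_{n,p}(j)/a_n}$, the conditional expectation inside $\xi_{p,j}^n$ is the deterministic integral $\int_{c_{n,p}(j)/a_n}^{d_{n,p}(j)/a_n}U_n(t,\te_n)a_n\mu(t,\te_n)\,dt$, so that $\sqrt{(p+1)\ell_n/a_n}\,\xi_{p,j}^n=a_n^{-1/2}\int_{(c_{n,p}(j)/a_n,d_{n,p}(j)/a_n]}U_n(t,\te_n)\,dM_n(t,\te_n)$. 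Summing over $j$ and subtracting, the mismatch is a fixed constant (involving $\langle\int_0^1 h(\cdot,\te_0,t)dt,f\rangle/\iI(\te_0)$) times $a_n^{-1/2}\int_{E_n}U_n(t,\te_n)\,dM_n(t,\te_n)$. Since $M_n(\cdot,\te_n)$ is the compensated Poisson martingale and $U_n$ is deterministic, the second moment of this last integral equals $\int_{E_n}U_n^2\,a_n\mu\,dt$, which by Condition~\ref{condmain}(c) is $\le Ka_n\mathrm{Leb}(E_n)$; hence the second moment of the mismatch is $\le K\mathrm{Leb}(E_n)$, with $\limsup$ over $n$ at most $K/(p+1)$, and Markov's inequality disposes of this part.

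For the $(k_n-L)$-summand, set $w_i^n=\langle k_n(\cdot,\te_n,t_{i-1},t_i)-L(\cdot),f\rangle$; then $|w_i^n|\le\|k_n(\cdot,\te_n,t_{i-1},t_i)-L\|\,\|f\|\le K$ (since $k_n$ and $L$ take values in $[0,1]$ and $\be$ is integrable), and $\E(w_i^n\mid\fF_{t_{i-1}})=0$ because the $\fF_{t_{i-1}}$-conditional law of $\La_n(t_i,\te_n)-\La_n(t_{i-1},\te_n)$ is the unit exponential, as in the computation behind (\ref{fub}). Comparing $\sum_{i}w_i^n1_{\{t_{i-1}\le1\}}$ with the doubly-indexed sum in $G_{n,p}(f)$ and using $\E(w_i^n1_{\{t_{i-1}\text{ in }j\text{-th big block}\}}\mid\fF_{c_{n,p}(j)/a_n})=0$ to re-express the recentering terms, one finds that the mismatch equals $C(1,\te_0)^{-1}a_n^{-1/2}$ times the sum of $\sum_{i}w_i^n1_{\{t_{i-1}\in E_n\}}$ (the intervals starting in $E_n$) and $\sum_j\sum_{i}(Y_{i,j}^n-\E(Y_{i,j}^n\mid\fF_{c_{n,p}(j)/a_n}))$ with $Y_{i,j}^n=w_i^n1_{\{t_{i-1}\text{ in }j\text{-th big block},\ t_i>c_{n,p}(j+1)/a_n\}}$ (the intervals that start in a big block but straddle past the adjoining small block, together with the recentering remainders), modulo an $O(1)$ correction from $i=1$ and the interval straddling $t=1$. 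For the first of these sums, the martingale-difference property of $(w_i^n)$ with respect to $(\fF_{t_{i-1}})$ kills all cross terms, so its second moment is $\le K\E(\#\{i:t_{i-1}\in E_n\})=K\int_{E_n}a_n\mu\,dt\le Ka_n\mathrm{Leb}(E_n)$, again of order $(p+1)^{-1}$ after division by $a_n$ and passage to $\limsup_n$. For the second sum, $Y_{i,j}^n\neq0$ for a given $j$ forces the $j$-th small block to be empty and then occurs for at most one $i$, so $\sum_i|Y_{i,j}^n|\le K1_{\{j\text{-th small block empty}\}}$, its $\fF_{c_{n,p}(j)/a_n}$-conditional expectation is $\le K\,\P(j\text{-th small block empty})$, and the whole double sum has $L^1$-norm at most $2K\sum_j\P(j\text{-th small block empty})\le 2Ka_n^{1-\varrho}\exp(-Ka_n^\varrho)$, using that $\mu$ is bounded away from zero; this stays $o(1)$ even after multiplication by $a_n^{-1/2}$, uniformly in $p$.

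Combining the estimates and applying Markov's inequality to each piece yields $\lim_{p\to\infty}\limsup_{n\to\infty}\P(|\langle\widetilde G_n,f\rangle-G_{n,p}(f)|\ge\eta)=0$. The step I expect to be the main obstacle is the combinatorial bookkeeping in the $(k_n-L)$-summand: one must pin down exactly which inter-event intervals are dropped in passing from the running sum over $i$ to the doubly-indexed block sums --- those based in a small block or the tail, plus the rare ones straddling a whole small block --- and see that the recentering terms of $\ze_{p,j}^n(f)$ reorganise, through the martingale identity $\E(w_i^n1_{\{t_{i-1}\text{ in a big block}\}}\mid\fF_{c_{n,p}(j)/a_n})=0$, into precisely those straddling contributions, which are negligible because an empty small block of length of order $a_n^{\varrho-1}$ has Poisson probability of order $\exp(-a_n^\varrho)$. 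With this structure in place, the remaining variance computations, and the extension from $d=1$ to general $d$, are routine.
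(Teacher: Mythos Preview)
Your proposal is correct and follows essentially the same decomposition as the paper's proof: split $\langle\widetilde G_n,f\rangle-G_{n,p}(f)$ along its two summands, recognise that the mismatch in each case lives on the small blocks and the leftover tail (your set $E_n$), and control these via second-moment bounds of order $K/(p+1)$; for the $(k_n-L)$-summand one then separately disposes of the straddling term arising from the extra constraint $t_i\le c_{n,p}(j+1)/a_n$ in $\ze_{p,j}^n(f)$. The only notable difference is in this last step: the paper simply uses that at most one index $i$ per block can straddle, combines this with the martingale orthogonality across $j$, and obtains an $L^2$ bound $K/((p+1)\ell_n)\to0$, whereas you sharpen the estimate by observing that a straddle forces the $j$-th small block to be empty, which for the inhomogeneous Poisson process has probability at most $\exp(-Ka_n^\varrho)$, yielding an exponentially small $L^1$ bound. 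Both arguments are valid; yours exploits the Poisson structure a bit more and gives a tighter rate, while the paper's is slightly more robust in that it does not need the small-block emptiness probability.
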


    By definition, $G_{n,p}(f)$ for each fixed $p$ is the end point of a martingale with respect to the filtration associated with $\fF_{\frac{c_{n,p}(j)}{a_n}}$ for $j=0, 1, \ldots$ Note that each $\ze_{p,j}^n(f)$ is measurable with respect to $\fF_{\frac{c_{n,p}(j+1)}{a_n}}$, for which the additional condition of $t_i \le \frac{c_{n,p}(j+1)}{a_n}$ is needed. However, as seen from (\ref{zeWze}) below, adding this condition does not alter the overall asymptotic behaviour.

    To finish the proof of Theorem \ref{thm1} we need to prove (\ref{aux1}) and (\ref{aux2}) for $G_{n,p}(f)$. In fact, we will only prove the analogue of (\ref{aux1}) in the following result as it is trivial to deduce (\ref{aux2}) afterwards.

    \begin{theorem} \label{thm2}
        Under Condition \ref{condmain}, and for any fixed $f$ and $p$, we have \(G_{n,p}(f) \tol \nN\left(0, \frac p{p+1} \si^2(f)\right)\).
    \end{theorem}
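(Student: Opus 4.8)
The plan is to recognise $G_{n,p}(f)=\sum_{j=0}^{\lfloor a_n/((p+1)\ell_n)\rfloor-1}\chi_{p,j}^n$ with $\chi_{p,j}^n=\sqrt{(p+1)\ell_n/a_n}\,(A\,\xi_{p,j}^n+B\,\ze_{p,j}^n(f))$, where $A=\langle\int_0^1 h(u,\te_0,t)\,dt,f(u)\rangle/\iI(\te_0)$ and $B=1/C(1,\te_0)$, and to invoke the martingale central limit theorem (Theorem 2.2.13 in \cite{jacpro2012}) for the array $(\chi_{p,j}^n)_j$ relative to the discrete filtration $\mathcal{G}_j^n:=\fF_{c_{n,p}(j)/a_n}$. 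By construction $\chi_{p,j}^n$ is $\fF_{c_{n,p}(j+1)/a_n}$-measurable — this is exactly why the cut $t_i\le c_{n,p}(j+1)/a_n$ was built into $\ze_{p,j}^n(f)$ — and $\E(\chi_{p,j}^n\mid\mathcal{G}_j^n)=0$, so we have a martingale difference array. Two facts special to inhomogeneous Poisson processes in the sense of Condition \ref{condmain} will be used repeatedly: by independence of increments, everything occurring on the window $(c_{n,p}(j)/a_n,c_{n,p}(j+1)/a_n]$ is independent of $\mathcal{G}_j^n$, so the conditional expectations above are deterministic and the $\chi_{p,j}^n$ are even row-wise independent; and, since $g=0$, $U_n(t,\te)$ from (\ref{defUn}) collapses to the deterministic $\pt\mu(t,\te)/\mu(t,\te)$. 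It then remains to check (i) $\sum_j\E((\chi_{p,j}^n)^2\mid\mathcal{G}_j^n)\pn\tfrac{p}{p+1}\si^2(f)$ and (ii) a conditional Lindeberg condition.

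Condition (ii) I would obtain from the stronger Lyapunov bound $\sum_j\E((\chi_{p,j}^n)^4)\to0$. Each summand of $\xi_{p,j}^n$ equals $\pt\mu(t_{i-1},\te_n)/\mu(t_{i-1},\te_n)$, uniformly bounded by Condition \ref{condmain}(c), and each summand of $\ze_{p,j}^n(f)$ equals $\langle k_n(\cdot,\te_n,t_{i-1},t_i)-L(\cdot),f\rangle$, bounded by $\vert\vert f\vert\vert\,\vert\vert k_n-L\vert\vert\le K$ since $0\le k_n\le1$ and $\be$ is integrable; moreover the number of summands on the $j$-th window is Poisson distributed with mean of order $(p+1)\ell_n$. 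The moment formulae for Poisson (stochastic) integrals then give $\E((\xi_{p,j}^n)^4)+\E((\ze_{p,j}^n(f))^4)\le K$ uniformly in $n,j$, hence $\E((\chi_{p,j}^n)^4)\le K((p+1)\ell_n/a_n)^2$ and, summing over the $\asymp a_n/((p+1)\ell_n)$ blocks, $\sum_j\E((\chi_{p,j}^n)^4)\le K(p+1)\ell_n/a_n\to0$ because $\ell_n=\lfloor a_n^\varrho\rfloor$ with $\varrho<1/2$; this also gives $\max_j|\chi_{p,j}^n|\pn0$.

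The heart of the matter is (i). Expanding the square gives $\E((\chi_{p,j}^n)^2\mid\mathcal{G}_j^n)=\frac{(p+1)\ell_n}{a_n}\big(A^2\E((\xi_{p,j}^n)^2)+2AB\,\E(\xi_{p,j}^n\ze_{p,j}^n(f))+B^2\E((\ze_{p,j}^n(f))^2)\big)$, and, writing $bb_j=(c_{n,p}(j)/a_n,d_{n,p}(j)/a_n]$, the three expectations are evaluated by Campbell/Mecke formulae: $\E((\xi_{p,j}^n)^2)=\frac{a_n}{(p+1)\ell_n}\int_{bb_j}(\pt\mu(s,\te_n))^2/\mu(s,\te_n)\,ds$; using that the $\fF_{t_{i-1}}$-conditional law of $\La_n(t_i,\te_n)-\La_n(t_{i-1},\te_n)$ is standard exponential (so $\E(\langle k_n(\cdot,\te_n,t_{i-1},t_i)-L,f\rangle\mid\fF_{t_{i-1}})=0$ and the summands are orthogonal), $\E((\ze_{p,j}^n(f))^2)=\frac{a_n}{(p+1)\ell_n}\De(f)\int_{bb_j}\mu(s,\te_n)\,ds$ up to a correction from the truncation $t_i\le c_{n,p}(j+1)/a_n$ that is negligible because a gap of length $\ell_n/a_n$ in the process has probability decaying exponentially in $a_n^\varrho$ (this is the point of (\ref{zeWze})); for the cross term, pairs $(t_m,(t_{i-1},t_i))$ with $m\le i-1$ contribute nothing (condition on $\fF_{t_{i-1}}$), while those with $m\ge i$ are handled by conditioning on $\fF_{t_i}$, using Campbell's formula for $\E\big(\sum_{\tau\in(t_i,d_{n,p}(j)/a_n]}\pt\mu(\tau,\te_n)/\mu(\tau,\te_n)\mid\fF_{t_i}\big)=a_n\int_{t_i}^{d_{n,p}(j)/a_n}\pt\mu(s,\te_n)\,ds$, expanding around $t_{i-1}$, and invoking $\E(X\exp(-uX))-L(u)\E(X)=-u/(1+u)^2$ for $X\sim\exp(1)$, which yields $\E(\xi_{p,j}^n\ze_{p,j}^n(f))=\frac{a_n}{(p+1)\ell_n}\Phi(f)\int_{bb_j}\pt\mu(s,\te_n)\,ds$ up to lower-order terms. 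The factor $(p+1)\ell_n/a_n$ cancels the $a_n$; since the big blocks $bb_j$ sit periodically with asymptotic density $p/(p+1)$ in $[0,1]$, summing over $j$ is a Riemann sum, and with $\te_n\to\te_0$ together with the uniform boundedness and continuity in Condition \ref{condmain}(c) it converges to $\frac{p}{p+1}\int_0^1\big(A^2(\pt\mu(s,\te_0))^2/\mu(s,\te_0)+2AB\,\Phi(f)\pt\mu(s,\te_0)+B^2\De(f)\mu(s,\te_0)\big)ds=\frac{p}{p+1}\si^2(f)$. The limit being deterministic, convergence in probability is immediate.

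With (i) and (ii) established, Theorem 2.2.13 in \cite{jacpro2012} delivers $G_{n,p}(f)\tol\nN(0,\tfrac{p}{p+1}\si^2(f))$, which is the assertion; the companion step $\nN(0,\tfrac{p}{p+1}\si^2(f))\tol\nN(0,\si^2(f))$ as $p\to\infty$ is trivial since $p/(p+1)\to1$. I expect the genuine obstacle to be the cross-term computation in (i): recognising that its surviving part comes from event times lying to the right of an interval, coupled with that interval's Laplace transform through the location of its right endpoint, and pinning down the resulting Poisson-functional covariance exactly — in particular making the several edge/truncation remainders (the $t_i\le c_{n,p}(j+1)/a_n$ cuts and the $O(\ell_n/a_n)$ terms) provably negligible after summation, and justifying the Riemann-sum passage with the moving parameter $\te_n$ using Condition \ref{condmain}(c).
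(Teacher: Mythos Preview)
Your proposal is correct and follows essentially the same route as the paper: both invoke Theorem~2.2.13 in \cite{jacpro2012} for the martingale difference array $(\chi_{p,j}^n)_j$ relative to $\fF_{c_{n,p}(j)/a_n}$, verify a Lyapunov fourth-moment condition giving uniform boundedness of $\E(|\xi_{p,j}^n|^4)$ and $\E(|\ze_{p,j}^n(f)|^4)$ (the paper cites (2.1.37) in \cite{jacpro2012}; you use Poisson moment formulae directly), replace $\ze_{p,j}^n(f)$ by its untruncated version via the argument behind (\ref{zeWze}), and compute the three conditional second moments to obtain the Riemann sum converging to $\frac{p}{p+1}\si^2(f)$. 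Your treatment of the cross term --- splitting off the $\fF_{t_{i-1}}$-measurable part of $a_n\int_{t_i}^{d_{n,p}(j)/a_n}\pt\mu$, freezing $\pt\mu$ near $t_{i-1}$, and invoking the identity $\E(Xe^{-uX})-L(u)\E(X)=-u/(1+u)^2$ for $X\sim\exp(1)$ --- is a compressed rendering of precisely what the paper does through the auxiliary function $\al_n(t,u)$ and the subsequent freezing of $\mu$ and $\pt\mu$ at $c_{n,p}(j)/a_n$; the resulting expression $\frac{a_n}{(p+1)\ell_n}\Phi(f)\int_{bb_j}\pt\mu(s,\te_n)\,ds$ matches the paper's exactly.
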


    \subsection{Proof of Lemma \ref{lem2}}
    Regarding the first summand, note that the sequence $\sqrt{a_n} (\hat \te_n - \te_n)$ is tight according to Lemma \ref{lem1}(b). Hence it is sufficient to show
    \begin{equation*}
        \E\left(\left \vert \left \vert \overline G_n^{(1)} \right \vert \right \vert^2 \right) = \int_0^\infty \E\left(\left( G_n^{(1)}(u) \right)^2 \right) \be(u) du =o(1),
    \end{equation*}
    where we have used Fubini's theorem. By construction,
    \begin{equation*}
            \E\left(\left( G_n^{(1)}(u) \right)^2 \right) = \frac 1{\ka_n^2} \sum_{i \ge 1} \E\left( \left (q_n(u,\te_n,t_{i-1}, t_i) -\E\left(q_n(u,\te_n,t_{i-1}, t_i) \middle | \fF_{t_{i-1}} \right) \right)^2 1_{\{t_{i-1} \le 1\}} \right). 
    \end{equation*}
		
    To finish the proof for the first summand we will show uniform boundedness of $q_n(u,\te_n,t_{i-1}, t_i)$, as then
    \begin{equation*}
        \E\left(\left( G_n^{(1)}(u) \right)^2 \right) \le K \frac 1{\ka_n^2} \sum_{i \ge 1} \E\left( 1_{\{t_{i-1} \le 1\}} \right) \le K \frac{\E(N_n(1)+1)}{\ka_n^2} = o(1)
    \end{equation*}
    (use Lemma \ref{lem1}(a) and integrability of $\be$) concludes. To prove the afore-mentioned uniform boundedness we write
    \begin{equation*}
        \left|q_n(u,\te,r,s)\right| \le \exp\left(-u \int_r^s a_n \mu(t,\te) dt \right) u a_n \int_r^s \left|\frac{\partial}{\partial \te} \mu(t,\te)\right| dt,
    \end{equation*}
    where we have used Condition \ref{condmain}(c) to justify the exchange of integral and derivative. The same assumption allows to bound the right hand side above by
    \begin{equation*}
        |q_n(u,\te,r,s)| \le K \exp\left(-u \int_r^s a_n \mu(t,\te) dt \right) u\, a_n \int_r^s \mu(t,\te) dt,
    \end{equation*}
    using the fact that $\mu(t,\te)$ is uniformly bounded from below while $|\frac{\partial}{\partial \te} \mu(t,\te)|$ is uniformly bounded from above. $x \exp(-x) \le 1$ for $x \ge 0$ finishes the proof for the first summand.

    For the second summand, note that a similar argument in combination with the boundedness of $x^2 \exp(-x)$ for $x \ge 0$ yields $|\Wq_n(u,\te,r,s)| \le K$. Hence, \(| G_n^{(2)}(u)| \le K \sqrt{a_n} \left|\hat \te_n - \te_n\right|^2 \left(N_n(1)+1\right) / \ka_n \), and Slutsky's lemma in combination with Lemma \ref{lem1}(a) and (b) proves that the right hand side is $o_\P(1)$.
    Since the same upper bound  (up to a possibly different constant) holds for $||G_n^{(2)}||$ as well, the proof is complete. \qed

    \subsection{Proof of Lemma \ref{lem3}}
    Let us start with some remarks regarding integrals with respect to inhomogeneous Poisson processes. For any $\fF$-stopping time $R$, any $s > 0$, and any bounded measurable function $\vp \ge 0$ we have
    \begin{equation*}
        \begin{split}
            \E\left(\sum_{i \ge 1} 1_{\{ R < t_i \le R+s\}} \vp(t_i,\te_n) \middle|\fF_{R}\right) & = \E\left(\int_{R}^{R+s} \vp(t,\te_n) dN_n(t) \middle|\fF_{R}\right) \\
                                                                                                  & = \E\left(a_n \int_{R}^{R+s} \vp(t,\te_n) \mu(t,\te_n) dt \middle|\fF_{R}\right)
        \end{split}
    \end{equation*}
    as well as
    \begin{equation*}
        \Var\left(\int_{R}^{R+s} \vp(t,\te_n) dN_n(t) \middle|\fF_{R}\right) = \E\left(a_n \int_R^{R+s} \vp^2(t,\te_n) \mu(t,\te_n) dt \middle|\fF_{R}\right).
    \end{equation*}

    We now write  $\wG_n^{(1)}(u) = \wG_n^{(2)}(u) + \wG_n^{(3)}(u) + \wG_n^{(4)}(u)$ with
    \begin{align*}
        \wG_n^{(2)}(u) & = \frac{1}{\ka_n} \left(\sum_{i \ge 1} h_n(u,\te_n,t_{i-1})  1_{\{t_{i-1} \le 1\}} - \int_0^1 h_n(u,\te_n,t) a_n \mu(t,\te_n) dt \right), \\
        \wG_n^{(3)}(u) & = \int_0^1 \left( \frac{a_n}{\ka_n} h_n(u,\te_n,t)\mu(t,\te_n) - h(u,\te_n,t)  \right)dt, \\
        \wG_n^{(4)}(u) & = \int_0^1 \left( h(u,\te_n,t) - h(u,\te_0,t)  \right)dt.
    \end{align*}
    Obviously, as in the proof of Lemma \ref{lem2}, it suffices to show \(\E\left(\left \vert \left \vert \wG_n^{(1)} \right \vert \right \vert^2 \right) = o(1)\), and we prove the claim above separately for $\wG_n^{(2)}$, $\wG_n^{(3)}$ and $\wG_n^{(4)}$. The latter is an immediate consequence of Condition \ref{condmain}(c) and dominated convergence.

    For the first one, note that we may write
    \begin{equation*}
        \begin{split}
            \wG_n^{(2)}(u) &= \frac{1}{\ka_n} \left(\sum_{i \ge 1} h_n(u,\te_n,t_{i-1})  1_{\{t_{i-1} \le 1\}} - \int_0^1 h_n(u,\te_n,t) a_n \mu(t,\te_n) dt \right)\\
                           &= \frac 1{\ka_n} \left(\int_0^1 h_n(u,\te_n,t) dN_n(t) - \int_0^1 h_n(u,\te_n,t) a_n \mu(t,\te_n) dt \right) + o_\P(1)
        \end{split}
    \end{equation*}
    where the error term is due to up to two additional summands according to $1_{\{t_{i-1} \le 1\}}$ instead of $1_{\{0 < t_{i} \le 1\}}$. In particular, this error term is uniformly bounded by definition of $h_n$ and $g_n$, and it is then easy to obtain
    \begin{equation*}
        \begin{split}
            & \E\left(\left( \wG_n^{(2)}(u) \right)^2 \right) \\
            & ~~= \E \left( \frac{1}{\ka^2_n} \left(\int_0^1 h_n(u,\te_n,t) dN_n(t) - \int_0^1 h_n(u,\te_n,t) a_n \mu(t,\te_n) dt \right) ^2 \right) + o(1) \\
            & ~~= \frac 1{\ka_n^2}  \int_0^1 h^2_n(u,\te_n,t) a_n \mu(t,\te_n) dt + o(1) = o(1)
        \end{split}
    \end{equation*}
    with again uniformly bounded error terms. Dominated convergence concludes.

    The most involved part of the proof regards $\wG_n^{(3)}$, and we begin with deriving a suitable functional form for $h_n(u,\te_n,t_{i-1})$. On the set where $s > t_{i-1}$, we have
    \begin{equation*}
        \begin{split}
            \P\left(t_{i} \ge s \middle| \fF_{t_{i-1}} \right) & = \P\left(\La(t_{i},\te_n) - \La(t_{i-1},\te_n) \ge \La(s,\te_n) - \La(t_{i-1},\te_n)  \middle| \fF_{t_{i-1}} \right) \\
                                                               & = \exp \left(- \int_{t_{i-1}}^s a_n \mu(r,\te_n) dr \right),
        \end{split}
    \end{equation*}
    which gives the conditional density
    \begin{equation} \label{density}
        f_{t_{i-1}}(s) = \exp \left(- \int_{t_{i-1}}^s a_n \mu(r,\te_n) dr \right) a_n \mu(s,\te_n) 1_{\{s > t_{i-1}\}}.
    \end{equation}
    As a consequence, we have by definition of $q_n(u,\te,r,s)$ and using integration by parts as well as a change of variables
    \begin{equation*}
        \begin{split}
            & h_n(u,\te_n,t_{i-1}) = \int_{t_{i-1}}^{\infty} \exp\left( -u\int_{t_{i-1}}^s a_n \mu(r,\te_n) dr \right) \left(-u a_n \int_{t_{i-1}}^s \frac{\partial}{\partial \te} \mu(r,\te_n) dr \right) \\
            & ~~~~~~~~~~~~~~~~~~~~~~~~ \exp\left( -\int_{t_{i-1}}^s a_n \mu(r,\te_n) dr  \right)a_n \mu(s,\te_n) ds \\
            & ~~= \int_{t_{i-1}}^{\infty}  \exp\left( -(u+1)\int_{t_{i-1}}^s a_n \mu(r,\te_n) dr  \right) \left(-u a_n \int_{t_{i-1}}^s \frac{\partial}{\partial \te} \mu(r,\te_n) dr \right)a_n \mu(s,\te_n) ds \\
            & ~~= -\frac{u}{1+u}\int_{0}^{\infty}  \exp\left( -(u+1)\int_{t_{i-1}}^{t_{i-1}+s} a_n \mu(r,\te_n) dr  \right) a_n \frac{\partial}{\partial \te} \mu(t_{i-1}+s,\te_n) ds.
        \end{split}
    \end{equation*}
    Again, Condition \ref{condmain}(c) allows to exchange integrals and derivatives. Using the shorthand notation \(g_{t,n}(s) = \int_0^s \mu(r+ t, \te_n) dr\) and \(f_{t,n}(s) = \frac{\partial}{\partial \te} \mu(t+s,\te_n)\), and another change of variables, we can hence write
    \begin{equation*}
        \begin{split}
            & h_n(u,\te_n,t)  = -\frac{u}{1+u}\int_{0}^{\infty}  \exp\left( -(u+1) a_n g_{t,n}(s) \right) a_n f_{t,n}(s) ds \\
            & ~~= -f_{t,n}(0)\frac{u}{1+u}\int_{0}^{\infty}   \exp\left( - (u+1)s \frac{g_{t,n}(\frac{s}{a_n}) - g_{t,n}(0)}{\frac s{a_n}} \right) ds \\
            & ~~~~-\frac{u}{1+u}\int_{0}^{\infty}  \exp\left( - (u+1)s \frac{g_{t,n}(\frac{s}{a_n}) - g_{t,n}(0)}{\frac s{a_n}} \right) \left(f_{t,n}\left(\frac{s}{a_n}\right) - f_{t,n}(0) \right) ds \\
            & ~~=: h_n^{(1)}(u,\te_n,t) + h_n^{(2)}(u,\te_n,t).
        \end{split}
    \end{equation*}
    Proving \(\int_0^\infty \left( \int_0^1 \frac{a_n}{\ka_n} h_n^{(2)}(u,\te_n,t)\mu(t,\te_n)dt \right)^2 \be(u) du = o(1)\) follows quickly, as Condition \ref{condmain}(c) grants boundedness of all coefficients as well as
    \begin{equation*}
        \begin{split}
            & \int_{0}^{\infty}  \exp\left( - (u+1)s \frac{g_{t,n}(\frac{s}{a_n}) - g_{t,n}(0)}{\frac s{a_n}} \right) \left(f_{t,n}\left(\frac{s}{a_n}\right) - f_{t,n}(0) \right) ds  \\
            & ~~~~\le K \int_{0}^{\infty} \frac{s}{a_n} \exp\left(-Ks \right) ds \le \frac{K}{a_n}.
        \end{split}
    \end{equation*}

    We finally prove
    \begin{equation} \label{stepfin}
        \int_0^\infty \left(\int_0^1 \left( \frac{a_n}{\ka_n}  h_n^{(1)}(u,\te_n,t) \mu(t,\te_n) - h(u,\te_n,t)  \right)dt \right)^2 \be(u) du = o(1).
    \end{equation}
    Note that we can rewrite
    \begin{equation*}
        \begin{split}
            h(u,\te_n,t) & = -\frac u{u+1} \frac 1{C(1,\te_0)} \frac{\partial}{\partial \te} \mu(t,\te_n) \frac{1}{(u+1)\mu(t,\te_n)}  \mu(t, \te_n) \\
                         & = -\frac u{u+1} \frac{a_n}{\ka_n} f_{t,n}(0)  \frac{1}{(u+1)g'_{t,n}(0)}  \mu(t, \te_n),
        \end{split}
    \end{equation*}
    utilizing Condition \ref{condmain}(c). Hence, using dominated convergence three times, (\ref{stepfin}) follows from showing
    \begin{equation*}
        \lim_{n \to \infty }\left\{ \exp\left( - (u+1)s \frac{g_{t,n}(\frac{s}{a_n}) - g_{t,n}(0)}{\frac s{a_n}} \right)  - \exp\left( - (u+1)s g'_{t,n}(0)\right)  \right\} = 0,
    \end{equation*}
    uniformly in $u$, $t$ and $s$. The latter is an easy consequence from Lipschitz continuity of $x \mapsto \exp(-x)$ on $[-K, \infty)$ and Condition \ref{condmain}(c). \qed

    \subsection{Proof of Lemma \ref{lem5}}
    Recall the identities given at the beginning of the proof of Lemma \ref{lem3}.
    Utilizing
    \begin{equation} \label{intU}
        \sum_{i \ge 1} U_n(t_{i-1},\te_n) 1_{\{R < t_{i-1} \le R+s\}} = \int_{R}^{R+s} U_n(t,\te_n) dN_n(t)
    \end{equation}
    and the definition of $U_n(t,\te)$ in (\ref{defUn}), we can then write
    \begin{equation*}
        S_n(\te_n) = \int_0^1 U_n(t,\te_n) dM_n(t,\te_n) = \sum_{i \ge 1} U_n(t_{i-1},\te_n)  1_{\{0 < t_{i-1} \le 1\}} -a_n  \int_0^1 \frac{\partial}{\partial \te} \mu(t,\te)dt,
    \end{equation*}
    and setting
    \begin{equation} \label{xi1}
        \begin{split}
            & \Bxi_{p,j}^n = \frac{1}{\sqrt{(p+1)\ell_n}} \sum_{i \ge 1} \left(U_n(t_{i-1},\te_n) 1_{\{\frac{d_{n,p}(j)}{a_n} < t_{i-1} \le \frac{c_{n,p}(j+1)}{a_n}\}} \right. \\
            & ~~~~~~~~~~~~~~~~~~~~~~~~~~~~~~~~~- \left. \E\left(U_n(t_{i-1},\te_n) 1_{\{\frac{d_{n,p}(j)}{a_n} < t_{i-1} \le \frac{c_{n,p}(j+1)}{a_n}\}} \middle| \fF_{\frac{d_{n,p}(j)}{a_n}} \right) \right) \\
            & ~~~~= \frac{1}{\sqrt{(p+1)\ell_n}} \left( \int_{\frac{d_{n,p}(j)}{a_n}}^{\frac{c_{n,p}(j+1)}{a_n}} U_n(t,\te_n) dN_n(t) - \E\left(\int_{\frac{d_{n,p}(j)}{a_n}}^{\frac{c_{n,p}(j+1)}{a_n}} U_n(t,\te_n) dN_n(t) \middle|\fF_{\frac{d_{n,p}(j)}{a_n}}\right)\right)
    \end{split}
\end{equation}
we then obtain
\begin{equation*}
    \begin{split}
        & a_n^{-1/2} S_n(\te_n) - \sqrt{\frac{(p+1)\ell_n}{a_n}}\sum_{j=0}^{\lfloor \frac{a_n}{(p+1)\ell_n} \rfloor - 1}  \xi_{p,j}^n \\
        & ~~~~= \sqrt{\frac{(p+1)\ell_n}{a_n}}\sum_{j=0}^{\lfloor \frac{a_n}{(p+1)\ell_n} \rfloor - 1}  \Bxi_{p,j}^n + a_n^{-1/2} \left( \int_{\lfloor \frac{a_n}{(p+1)\ell_n} \rfloor \frac{(p+1)\ell_n}{a_n}}^{1} U_n(t,\te_n) dN_n(t) \right. \\
        & ~~~~~~~~~~~~~~~~- \left. \E\left[ \int_{\lfloor \frac{a_n}{(p+1)\ell_n} \rfloor \frac{(p+1)\ell_n}{a_n}}^{1} U_n(t,\te_n) dN_n(t)  \middle|\fF_{\lfloor \frac{a_n}{(p+1)\ell_n} \rfloor \frac{(p+1)\ell_n}{a_n}}\right]\right).
    \end{split}
\end{equation*} 

Using the final representation for $\Bxi_{p,j}^n$ above we get
\begin{equation*}
    \begin{split}
        & \E \left( \left(\sqrt{\frac{(p+1)\ell_n}{a_n}}\sum_{j=0}^{\lfloor \frac{a_n}{(p+1)\ell_n} \rfloor - 1}  \Bxi_{p,j}^n  \right)^2 \right) \\
        & ~~= \frac{(p+1)\ell_n}{a_n}\sum_{j=0}^{\lfloor \frac{a_n}{(p+1)\ell_n} \rfloor - 1}  \Var\left(\Bxi_{p,j}^n\right) \\
        & ~~= a_n \frac{(p+1)\ell_n}{a_n}\sum_{j=0}^{\lfloor \frac{a_n}{(p+1)\ell_n} \rfloor - 1} \frac 1{(p+1)\ell_n} \int_\frac{d_{n,p}(j)}{a_n}^\frac{c_{n,p}(j+1)}{a_n} \frac{ \left(\frac{\partial}{\partial \te}\mu(t,\te_n) \right)^2}{\mu(t,\te_n)} dt \le \frac{K}{p+1}
    \end{split}
\end{equation*}
where we have applied the martingale structure in the first equality as well as Condition \ref{condmain}(c) and the definitions of $c_{n,p}(j)$ and $d_{n,p}(j)$ for the final bound. Similarly,
\begin{equation*}
    \begin{split}
        &\E\left( \left(a_n^{-1/2} \left( \int_{\lfloor \frac{a_n}{(p+1)\ell_n} \rfloor \frac{(p+1)\ell_n}{a_n}}^{1} U_n(t,\te_n) dN_n(t) \right.\right.\right. \\
        &~~~~~~~~~~~~~~~~- \left.\left.\left. \E\left( \int_{\lfloor \frac{a_n}{(p+1)\ell_n} \rfloor \frac{(p+1)\ell_n}{a_n}}^{1} U_n(t,\te_n) dN_n(t)  \middle|\fF_{\lfloor \frac{a_n}{(p+1)\ell_n} \rfloor \frac{(p+1)\ell_n}{a_n}}\right)\right)\right)^2 \right) \\
        & ~~= \int_{\lfloor \frac{a_n}{(p+1)\ell_n} \rfloor \frac{(p+1)\ell_n}{a_n}}^{1} \frac{ \left(\frac{\partial}{\partial \te}\mu(t,\te_n) \right)^2}{\mu(t,\te_n)} dt \le K \frac{(p+1)\ell_n}{a_n}.
    \end{split}
\end{equation*}
By definition of $\ell_n$ is then easy to deduce
\begin{equation*}
    \lim_{p \to \infty} \limsup_{n \to \infty} \P\left(\left|a_n^{-1/2} S_n(\te_n) - \sqrt{\frac{(p+1)\ell_n}{a_n}}\sum_{j=0}^{\lfloor \frac{a_n}{(p+1)\ell_n} \rfloor - 1}  \xi_{p,j}^n  \right| \geq \eta\right) = 0
\end{equation*}
which (together with the uniform boundedness of the other factors appearing) finishes the first part of the proof.

The second part of the proof is slightly more involved. We need to reproduce most of the arguments given previously on one hand, but we also need to discuss the additional condition of $t_i \le \frac{c_{n,p}(j+1)}{a_n}$. We will start with the former and, setting
\begin{align*}
    \Wze_{p,j}^n(f) &= \frac{1}{\sqrt{(p+1)\ell_n}} \sum_{i \ge 1} \left \langle k_n(u, \te_n, t_{i-1}, t_i) - L(u), f(u) \right \rangle 1_{\{\frac{c_{n,p}(j)}{a_n} < t_{i-1} \le \frac{d_{n,p}(j)}{a_n}\}},\\
    \Bze_{p,j}^n(f) &= \frac{1}{\sqrt{(p+1)\ell_n}} \sum_{i \ge 1} \left \langle k_n(u, \te_n, t_{i-1}, t_i) - L(u), f(u) \right \rangle 1_{\{\frac{d_{n,p}(j)}{a_n} < t_{i-1} \le \frac{c_{n,p}(j+1)}{a_n}\}},
\end{align*}
discuss
\begin{equation*}
    \begin{split}
        &a_n^{-1/2} \sum_{i \ge 1}\left \langle k_n(u, \te_n, t_{i-1}, t_i) - L(u), f(u) \right \rangle 1_{\{t_{i-1} \le 1\}} - \sqrt{\frac{(p+1)\ell_n}{a_n}}\sum_{j=0}^{\lfloor \frac{a_n}{(p+1)\ell_n} \rfloor - 1}  \Wze_{p,j}^n(f) \\
        & ~~= \sqrt{\frac{(p+1)\ell_n}{a_n}}\sum_{j=0}^{\lfloor \frac{a_n}{(p+1)\ell_n} \rfloor - 1}  \Bze_{p,j}^n(f) \\
        & ~~~~~~~~+ a_n^{-1/2}  \sum_{i \ge 1} \left \langle k_n(u, \te_n, t_{i-1}, t_i) - L(u), f(u) \right \rangle 1_{\{\lfloor \frac{a_n}{(p+1)\ell_n} \rfloor \frac{(p+1)\ell_n}{a_n} < t_{i-1} \le 1\}}.
    \end{split}
\end{equation*}
First, applying (\ref{fub}) with $f$ in place of $e_j$ gives
\begin{equation*}
    \begin{split}
        \E \left(\left \langle k_n(u, \te_n, t_{i-1}, t_i) - L(u), f(u) \right \rangle 1_{\{\frac{c_{n,p}(j)}{a_n} < t_{i-1} \le \frac{d_{n,p}(j)}{a_n}\}} \middle| \fF_\frac{c_{n,p}(j)}{a_n} \right) =0
    \end{split}
\end{equation*}
after successive conditioning utilizing
\begin{equation} \label{condhilf}
    \begin{split}
        \E\left(1_{\{ R \le S \}} X \middle| \fF_R \right) = \E\left(1_{\{ R \le S \}} \E\left(X \middle| \fF_S \right) \middle| \fF_R \right)
    \end{split}
\end{equation}
for a generic random variable $X$ and generic stopping times $R$ and $S$. By uniform boundedness of $\left \langle k_n(u, \te_n, t_{i-1}, t_i) - L(u), f(u) \right \rangle$ for a fixed $f$ we then get
\begin{equation*}
    \begin{split}
        & \E \left( \left(\sqrt{\frac{(p+1)\ell_n}{a_n}}\sum_{j=0}^{\lfloor \frac{a_n}{(p+1)\ell_n} \rfloor - 1}  \Bze_{p,j}^n(f)  \right)^2 \right) = \frac{(p+1)\ell_n}{a_n}\sum_{j=0}^{\lfloor \frac{a_n}{(p+1)\ell_n} \rfloor - 1}  \Var\left(\Bze_{p,j}^n(f)\right) \\
        & ~~~~~~~~\le \frac{1}{a_n}\sum_{j=0}^{\lfloor \frac{a_n}{(p+1)\ell_n} \rfloor - 1}  \E\left( \left(N\left(\frac{c_{n,p}(j+1)}{a_n}\right) - N\left(\frac{d_{n,p}(j)}{a_n}\right) \right)\right) \\
        & ~~~~~~~~\le \frac{1}{a_n}\sum_{j=0}^{\lfloor \frac{a_n}{(p+1)\ell_n} \rfloor - 1} a_n \frac{K \ell_n}{a_n} \le \frac{K}{p+1},
    \end{split}
\end{equation*}
and we also have
\begin{equation*}
    a_n^{-1/2}  \sum_{i \ge 1} \E\left( \left|\left \langle k_n(u, \te_n, t_{i-1}, t_i) - L(u), f(u) \right \rangle \right| 1_{\{\lfloor \frac{a_n}{(p+1)\ell_n} \rfloor \frac{(p+1)\ell_n}{a_n} < t_{i-1} \le 1\}} \right) \le K \frac{(p+1)\ell_n}{\sqrt{a_n}}
\end{equation*}
which converges to zero as $n \to \infty$, by definition of $\ell_n$. For these terms we can then conclude as in the first step of the proof, ignoring bounded factors again.

Finally, we need to bound
\begin{equation*}
    \begin{split}
        & \E\left( \left( \sqrt{\frac{(p+1)\ell_n}{a_n}}\sum_{j=0}^{\lfloor \frac{a_n}{(p+1)\ell_n} \rfloor - 1}  \left(\Wze_{p,j}^n(f) - \ze_{p,j}^n(f) \right) \right)^2 \right) \\
        & ~~= \frac{(p+1)\ell_n}{a_n} \sum_{j=0}^{\lfloor \frac{a_n}{(p+1)\ell_n} \rfloor - 1}\E\left(  \left(\Wze_{p,j}^n(f) - \ze_{p,j}^n(f)\right)^2 \right),
    \end{split}
\end{equation*}
where we have used the martingale structure for both summands again. Note that, applying (\ref{fub}) again,
\begin{equation*}
    \begin{split}
        &\Wze_{p,j}^n(f) - \ze_{p,j}^n(f)  =  \frac{1}{\sqrt{(p+1)\ell_n}} \sum_{i \ge 1} \left \langle k_n(u, \te_n, t_{i-1}, t_i) - L(u), f(u) \right \rangle \times \\
        & ~~~\left(1_{\{\frac{c_{n,p}(j)}{a_n} < t_{i-1} \le \frac{d_{n,p}(j)}{a_n}, t_i > \frac{c_{n,p}(j+1)}{a_n} \}} - \E\left(1_{\{\frac{c_{n,p}(j)}{a_n} < t_{i-1} \le \frac{d_{n,p}(j)}{a_n}, t_i > \frac{c_{n,p}(j+1)}{a_n} \}} \middle|  \fF_{\frac{c_{n,p}(j)}{a_n}}\right) \right).
    \end{split}
\end{equation*}
Clearly, for any fixed $j$ the number of indices $i$ satisfying $\frac{c_{n,p}(j)}{a_n} < t_{i-1} \le \frac{d_{n,p}(j)}{a_n}$ and $t_i > \frac{c_{n,p}(j+1)}{a_n}$ is at most one. Thus, by uniform boundedness again,
\begin{equation} \label{zeWze}
    \E\left( \left( \sqrt{\frac{(p+1)\ell_n}{a_n}}\sum_{j=0}^{\lfloor \frac{a_n}{(p+1)\ell_n} \rfloor - 1}  \left(\Wze_{p,j}^n(f) - \ze_{p,j}^n(f) \right) \right)^2 \right) \le \frac{K}{(p+1)\ell_n}
\end{equation}
which finishes the proof. \qed

\subsection{Proof of Theorem \ref{thm2}}
According to Theorem 2.2.13 in \cite{jacpro2012} it is enough to prove
\begin{equation*}
    \sum_{j=0}^{\lfloor \frac{a_n}{(p+1)\ell_n} \rfloor - 1} \left(\frac{(p+1)\ell_n}{a_n}\right)^{2} \E \left( \left|\frac{\langle\int_0^1 h(u,\te_0,t) dt, f(u)  \rangle}{\iI(\te_0)} \xi_{p,j}^n + \frac 1{C(1,\te_0)} \ze_{p,j}^n(f) \right|^4\right) =o(1)
\end{equation*}
as well as convergence of
\begin{equation*}
    \frac{(p+1)\ell_n}{a_n} \sum_{j=0}^{\lfloor \frac{a_n }{(p+1)\ell_n} r \rfloor - 1} \E \left( \left(\frac{\langle\int_0^1 h(u,\te_0,t) dt, f(u)  \rangle}{\iI(\te_0)} \xi_{p,j}^n + \frac 1{C(1,\te_0)} \ze_{p,j}^n(f) \right)^2 \middle|  \fF_{\frac{c_{n,p}(j)}{a_n}} \right)
\end{equation*}
towards
\begin{equation} 
    \begin{split}
        & \frac{p}{p+1} \int_0^r \left( \left(\frac{\langle\int_0^1 h(u,\te_0,t) dt, f(u)  \rangle}{\iI(\te_0)}\right)^2 \frac{ \left(\frac{\partial}{\partial \te}\mu(s,\te_0) \right)^2}{\mu(s,\te_0)} \right. \\
        & ~~~~~~~~~~~~+  \left.  2 \frac{\langle\int_0^1 h(u,\te_0,t) dt, f(u)  \rangle}{\iI(\te_0)C(1,\te_0)} \Phi(f) \frac{\partial}{\partial \te} \mu(s,\te_0) + \frac{\De(f)}{C(1,\te_0)^2} \mu(s,\te_0)\right) ds, \label{limit}
    \end{split}
\end{equation}
in probability as $n \to \infty$, the latter for any $r \in [0,1]$, and everything for a fixed $p$ and a fixed $f$. The proof is then finished as (\ref{limit}) equals $\frac p{p+1}\si^2(f)$ for $r=1$. Regarding the terms involving $\ze_{p,j}^n(f)$, note that we may replace them by $\Wze_{p,j}^n(f)$ for both claims because the same reasoning that was leading to (\ref{zeWze}) applies here as well.

For the fourth moments, note that we only need to prove uniform boundedness of $\E(|\xi_{p,j}^n|^4)$ and $\E(|\Wze_{p,j}^n(f)|^4)$. For the first one, recall basically from (\ref{xi1}) that we can write
\begin{equation*}
    \xi_{p,j}^n = \frac{1}{\sqrt{(p+1)\ell_n}} \left( \int_{\frac{c_{n,p}(j)}{a_n}}^{\frac{d_{n,p}(j)}{a_n}} U_n(t,\te_n) dN_n(t) - \E\left(\int_{\frac{c_{n,p}(j)}{a_n}}^{\frac{d_{n,p}(j)}{a_n}} U_n(t,\te_n) dN_n(t) \middle|\fF_{\frac{c_{n,p}(j)}{a_n}}\right)\right),
\end{equation*}
and (2.1.37) in \cite{jacpro2012} shows
\begin{equation*}
    \begin{split}
        & \E(|\xi_{p,j}^n|^4) \\
        & ~~~~\le K \frac{1}{((p+1)\ell_n)^2} \times \left\{ a_n \int_{\frac{c_{n,p}(j)}{a_n}}^{\frac{d_{n,p}(j)}{a_n}} U_n^4(t,\te_n) \mu(t,\te_n) dt + \left( a_n \int_{\frac{c_{n,p}(j)}{a_n}}^{\frac{d_{n,p}(j)}{a_n}} U^2_n(t,\te_n) \mu(t,\te_n) dt\right)^2 \right\} \\
        & ~~~~\le K \left( \frac{p}{(p+1)^2\ell_n} + \frac{p^2}{(p+1)^2} \right) \le K.
    \end{split}
\end{equation*}
Utilizing (\ref{fub}) we also have
\begin{equation*}
    \begin{split}
        & \E\left( \left|\Wze_{p,j}^n(f)\right|^4 \right) \\
        & ~~~~\le K \frac{1}{((p+1)\ell_n)^2} \times \left\{ \sum_{i \ge 1} \E \left(\left \langle k_n(u, \te_n, t_{i-1}, t_i) - L(u), f(u) \right \rangle^4 1_{\{\frac{c_{n,p}(j)}{a_n} < t_{i-1} \le \frac{d_{n,p}(j)}{a_n}\}} \right) \right. \\
        & ~~~~~~~~+ \left.\left( \sum_{i \ge 1} \E \left(\left \langle k_n(u, \te_n, t_{i-1}, t_i) - L(u), f(u) \right \rangle^2 1_{\{\frac{c_{n,p}(j)}{a_n} < t_{i-1} \le \frac{d_{n,p}(j)}{a_n}\}} \right) \right)^2 \right\}.
    \end{split}
\end{equation*}
By uniform boundedness of $\langle k_n(u, \te_n, t_{i-1}, t_i) - L(u), f(u) \rangle$ we then obtain
\begin{equation*}
    \E(|\Wze_{p,j}^n(f)|^4) \le K \left( \frac{p}{(p+1)^2\ell_n} + \frac{p^2}{(p+1)^2} \right) \le K
\end{equation*}
as above.

We will finally discuss the convergence towards (\ref{limit}), but we will restrict ourselves to the case $r=1$ as the proof is completely similar in the general setting. First, recalling the rules established in the beginning of the proof of Lemma \ref{lem3},
\begin{equation*}
    \begin{split}
        & \E \left(|\xi_{p,j}^n|^2 \middle|  \fF_{\frac{c_{n,p}(j)}{a_n}} \right) = \frac{1}{(p+1)\ell_n} \Var\left( \int_{\frac{c_{n,p}(j)}{a_n}}^{\frac{d_{n,p}(j)}{a_n}} U_n(s,\te_n) dN_n(s)\middle|  \fF_{\frac{c_{n,p}(j)}{a_n}}\right) \\
        & ~~= \frac{a_n}{(p+1)\ell_n} \int_{\frac{c_{n,p}(j)}{a_n}}^{\frac{d_{n,p}(j)}{a_n}} U_n^2(s,\te_n) \mu(s,\te_n) ds = \frac{a_n}{(p+1)\ell_n} \int_{\frac{c_{n,p}(j)}{a_n}}^{\frac{d_{n,p}(j)}{a_n}} \frac{ \left(\frac{\partial}{\partial \te}\mu(s,\te_n) \right)^2}{\mu(s,\te_n)} ds.
    \end{split}
\end{equation*}
Also, using (\ref{fub}) and (\ref{condhilf}) again,
\begin{equation*}
    \begin{split}
        & \E \left(|\Wze_{p,j}^n(f)|^2 \middle|  \fF_{\frac{c_{n,p}(j)}{a_n}} \right) = \frac{1}{(p+1)\ell_n} \times \\
        & ~~~~~~\sum_{i \ge 1} \E\left( \E\left(\left \langle k_n(u, \te_n, t_{i-1}, t_i) - L(u), f(u) \right \rangle^2 \middle| \fF_{t_{i-1}} \right) 1_{\{\frac{c_{n,p}(j)}{a_n} < t_{i-1} \le \frac{d_{n,p}(j)}{a_n}\}} \middle|  \fF_{\frac{c_{n,p}(j)}{a_n}} \right),
    \end{split}
\end{equation*}
and we also have
\begin{equation*}
    \E\left(\left \langle k_n(u, \te_n, t_{i-1}, t_i) - L(u), f(u) \right \rangle^2 \middle| \fF_{t_{i-1}} \right) = \E\left(\left \langle \exp(-uX) - L(u), f(u) \right \rangle^2 \right)
\end{equation*}
with $X \sim \exp(1)$ again. A use of Fubini's theorem proves
\begin{equation*}
    \begin{split}
        &\E\left(\left \langle \exp(-uX) - L(u), f(u) \right \rangle^2\right) \\
        & ~~= \int_0^\infty \int_0^\infty \int_0^\infty \left(\exp(-ux) - \frac 1{1+u} \right)  \left(\exp(-vx) - \frac 1{1+v} \right) \\
        & ~~~~~~~~~~~~~~~~~~~~~~~~~~~~~~~~~~~~~~~~~~~~~\exp(-x) dx \be(u) f(u)du \be(v)f(v) dv \\
        & ~~= \int_0^\infty \int_0^\infty \left(\frac 1{1+u+v} -  \frac 1{(1+u)(1+v)} \right) \be(u) f(u) \be(v) f(v) du dv = \De(f).
    \end{split}
\end{equation*}
Hence,
\begin{equation*}
    \begin{split}
        \E \left(|\Wze_{p,j}^n(f)|^2 \middle|  \fF_{\frac{c_{n,p}(j)}{a_n}} \right) & = \frac{1}{(p+1)\ell_n} \De(f)  \E\left( \left(N\left(\frac{d_{n,p}(j)}{a_n}\right) - N\left(\frac{c_{n,p}(j)}{a_n}\right) \right) \middle|  \fF_{\frac{c_{n,p}(j)}{a_n}}\right) \\
                                                                                    & = \frac{a_n}{(p+1)\ell_n} \De(f) \int_{c_{n,p}(j)}^{d_{n,p}(j)} \mu(s,\te_n) ds.
    \end{split}
\end{equation*}

We finally have to discuss \(\E \left(\xi_{p,j}^n \Wze_{p,j}^n(f) \middle|  \fF_{\frac{c_{n,p}(j)}{a_n}} \right)\), which becomes
\begin{equation}  \label{step2}
    \begin{split}
        & \frac{1}{(p+1)\ell_n} \sum_{i_1,i_2 \ge 1} \E \left( U_n(t_{i_1-1},\te_n) \left \langle k_n(u, \te_n, t_{i_2-1}, t_{i_2}) - L(u), f(u) \right \rangle \times \right. \\
        & ~~~~~~~~~~~~~~~~~~~~~~~~~~~~~~~~~~~~~~~~~~~\left. 1_{\{\frac{c_{n,p}(j)}{a_n} < t_{i_1-1}, t_{i_2-1} \le \frac{d_{n,p}(j)}{a_n}\}} \middle|  \fF_{\frac{c_{n,p}(j)}{a_n}} \right)
    \end{split}
\end{equation}
after applying (\ref{fub}). The same reasoning also proves that the conditional expectation above vanishes for $i_1 \le i_2$. On the other hand, for any fixed $i_2$ we obtain
\begin{equation*}
    \begin{split}
        & \sum_{i_1 > i_2} \E \left( U_n(t_{i_1-1},\te_n) \left \langle k_n(u, \te_n, t_{i_2-1}, t_{i_2}) - L(u), f(u) \right \rangle \times \right. \\
        &~~~~~~~~~~~~~~~~~~~~~~~~~~~~~~~~~~~~~~~~~~~ \left. 1_{\{\frac{c_{n,p}(j)}{a_n} < t_{i_1-1}, t_{i_2-1} \le \frac{d_{n,p}(j)}{a_n}\}} \middle|  \fF_{\frac{c_{n,p}(j)}{a_n}} \right) \\
        & ~~= \E \left( U_n(t_{i_2},\te_n) \left \langle k_n(u, \te_n, t_{i_2-1}, t_{i_2}) - L(u), f(u) \right \rangle 1_{\{\frac{c_{n,p}(j)}{a_n} < t_{i_2-1} < t_{i_2 }\le \frac{d_{n,p}(j)}{a_n}\}} \middle|  \fF_{\frac{c_{n,p}(j)}{a_n}} \right) \\
        & ~~~~~~~~+ \E \left(\sum_{i_1 > i_2+1 } \E  \left[ U_n(t_{i_1-1},\te_n) 1_{\{t_{i_2} < t_{i_1-1} \le \frac{d_{n,p}(j)}{a_n}\}} \middle| \fF_{t_{i_2}} \right)\times \right. \\
        & ~~~~~~~~~~~~~~~~~~~~~\left. \left \langle k_n(u, \te_n, t_{i_2-1}, t_{i_2}) - L(u), f(u) \right \rangle 1_{\{\frac{c_{n,p}(j)}{a_n} < t_{i_2-1} < t_{i_2 }\le \frac{d_{n,p}(j)}{a_n}\}}  \middle|  \fF_{\frac{c_{n,p}(j)}{a_n}} \right].
    \end{split}
\end{equation*}
From (\ref{intU}) the term above thus becomes
\begin{equation*}
    \begin{split}
        & \E \left(1_{\{\frac{c_{n,p}(j)}{a_n} < t_{i_2-1} < t_{i_2 }\le \frac{d_{n,p}(j)}{a_n}\}} \left \langle k_n(u, \te_n, t_{i_2-1}, t_{i_2}) - L(u), f(u) \right \rangle \right.\\
        & ~~~~~~~~ \left. \left(U_n(t_{i_2},\te_n) + \int_{t_{i_2}}^{\frac{d_{n,p}(j)}{a_n}} a_n \frac{\partial}{\partial \te} \mu(r,\te_n)dr \right) \middle|  \fF_{\frac{c_{n,p}(j)}{a_n}} \right) \\
        & ~~= \E \left( \E\left(1_{\{{t_{i_2-1}} < t_{i_2}\le \frac{d_{n,p}(j)}{a_n}\}} \left \langle k_n(u, \te_n, t_{i_2-1}, t_{i_2}) - L(u), f(u) \right \rangle \right. \right.\\
        & ~~~~~~~~ \left. \left. \left(U_n(t_{i_2},\te_n) + \int_{t_{i_2}}^{\frac{d_{n,p}(j)}{a_n}} a_n \frac{\partial}{\partial \te} \mu(r,\te_n)dr \right) \middle|  \fF_{t_{i_2-1}} \right) 1_{\{\frac{c_{n,p}(j)}{a_n} < t_{i_2-1} \le \frac{d_{n,p}(j)}{a_n}\}} \middle|  \fF_{\frac{c_{n,p}(j)}{a_n}} \right).
    \end{split}
\end{equation*}
We apply (\ref{density}). Hence, setting
\begin{equation*}
    \begin{split}
        & \al_n(t,u) = \int_{t}^{\frac{d_{n,p}(j)}{a_n}} \left(\exp\left(-u \int_{t}^s a_n \mu(r,\te_n) dr  \right) - L(u) \right) \times \\
        & ~~~~\left (U_n(s,\te_n) + \int_{s}^{\frac{d_{n,p}(j)}{a_n}} a_n \frac{\partial}{\partial \te} \mu(r,\te_n)dr \right) \exp\left( -\int_{t}^s a_n \mu(r,\te_n) dr  \right) a_n \mu(s,\te_n) ds
    \end{split}
\end{equation*}
and utilizing Fubini's theorem, (\ref{step2}) becomes
\begin{equation*}
    \begin{split}
        & \frac{1}{(p+1)\ell_n}\left \langle \E \left(\sum_{i_2 \ge 1}\al_n(t_{i_2-1},u)  1_{\{\frac{c_{n,p}(j)}{a_n} < t_{i_2-1} \le \frac{d_{n,p}(j)}{a_n}\}}\middle|  \fF_{\frac{c_{n,p}(j)}{a_n}} \right), f(u) \right \rangle \\
        & ~~= \frac{1}{(p+1)\ell_n}\left \langle \int_{\frac{c_{n,p}(j)}{a_n}}^{\frac{d_{n,p}(j)}{a_n}} a_n \al_n(t,u) \mu(t,\te_n) dt, f(u) \right \rangle.
    \end{split}
\end{equation*}
Regarding the inner integral, we obtain
\begin{equation*}
    \begin{split}
        &\int_{\frac{c_{n,p}(j)}{a_n}}^{\frac{d_{n,p}(j)}{a_n}} a_n \al_n(t,u) \mu(t,\te_n) dt = \int_{\frac{c_{n,p}(j)}{a_n}}^{\frac{d_{n,p}(j)}{a_n}} \frac 1{1+u} \left(U_n(s,\te_n) + \int_{s}^{\frac{d_{n,p}(j)}{a_n}} a_n \frac{\partial}{\partial \te} \mu(r,\te_n)dr \right)  \times \\
        & ~~\left( \exp\left(-\int_{\frac{c_{n,p}(j)}{a_n}}^s a_n \mu(r,\te_n) dr  \right) - \exp\left(-\int_{\frac{c_{n,p}(j)}{a_n}}^s (1+u) a_n \mu(r,\te_n) dr  \right)  \right)  a_n \mu(s,\te_n) ds
    \end{split}
\end{equation*}
from Fubini's theorem first. Evaluating the latter integral is hard, but several applications of Condition \ref{condmain}(c) as well as $\ell_n$ being of the order $a_n^\vr$, $0 < \vr < 1/2$, allow to replace $\mu(s,\te_n)$ by $\mu(\frac{c_{n,p}(j)}{a_n},\te_n)$ with an error or small order, and similarly for the derivatives. Also, the summand involving $U_n(s,\te_n)$ is of small order as well. Hence the approximation
\begin{equation*}
    \begin{split}
        &\int_{\frac{c_{n,p}(j)}{a_n}}^{\frac{d_{n,p}(j)}{a_n}} a_n \al_n(t,u) \mu(t,\te_n) dt = \int_{\frac{c_{n,p}(j)}{a_n}}^{\frac{d_{n,p}(j)}{a_n}} \frac 1{1+u}  a_n^2 \frac{\partial}{\partial \te} \mu(\frac{c_{n,p}(j)}{a_n},\te_n) \left(\frac{d_{n,p}(j)}{a_n}-s\right)  \times \\
        & ~~\left(\exp\left(-a_n \mu(\frac{c_{n,p}(j)}{a_n},\te_n) \left(s-\frac{c_{n,p}(j)}{a_n}\right)  \right) \right.\\
        & ~~~~~~~~-\left.  \exp\left(-a_n(u+1) \mu(\frac{c_{n,p}(j)}{a_n},\te_n) \left(s-\frac{c_{n,p}(j)}{a_n}\right)\right)  \right) \mu(\frac{c_{n,p}(j)}{a_n},\te_n) ds \left(1+o(1)\right)
    \end{split}
\end{equation*}
with a small order term uniform in $\te$ and $u$ holds (note again that $p$ is fixed). To evaluate the latter integral we use
\begin{equation*}
    \int_a^b \exp(-K(s-a)) (b-s) ds = \frac{(b-a)K - (1-\exp(-(b-a)K))}{K^2}
\end{equation*}
for generic positive $a, b$ and $K$, and we see that the latter fraction to first order equals $\frac{b-a}K$ in our setting. To summarize,
\begin{equation*}
    \int_{\frac{c_{n,p}(j)}{a_n}}^{\frac{d_{n,p}(j)}{a_n}} a_n \al_n(t,u) \mu(t,\te_n) dt = \frac{1}{1+u} \left(1-\frac 1{1+u} \right) p\ell_n \frac{\partial}{\partial \te} \mu(\frac{c_{n,p}(j)}{a_n},\te_n) \left(1+o(1)\right)
\end{equation*}
with a small order term as above. Using Condition \ref{condmain}(c) again we can then write (\ref{step2}) as
\begin{equation*}
    \frac{p}{p+1} \Phi(f) \frac{\partial}{\partial \te} \mu(\frac{c_{n,p}(j)}{a_n},\te_n) \left(1+o(1)\right) =  \frac{a_n}{(p+1)\ell_n} \Phi(f) \int_{\frac{c_{n,p}(j)}{a_n}}^{\frac{d_{n,p}(j)}{a_n}} \frac{\partial}{\partial \te}\mu(s,\te_n) ds \left(1+o(1)\right).
\end{equation*}
The result is now easy to conclude from Condition \ref{condmain}(c) again. \qed

%
%

\bibhang=1.7pc
\bibsep=2pt
\fontsize{9}{14pt plus.8pt minus .6pt}\selectfont
\renewcommand\bibname{\large \bf References}
\expandafter\ifx\csname
natexlab\endcsname\relax\def\natexlab#1{#1}\fi
\expandafter\ifx\csname url\endcsname\relax
\def\url#1{\texttt{#1}}\fi
\expandafter\ifx\csname urlprefix\endcsname\relax\def\urlprefix{URL}\fi

\bibliographystyle{chicago}      
\bibliography{references}   

\begin{thebibliography}{}

\bibitem[\protect\citeauthoryear{Bauwens and Hautsch}{Bauwens and
  Hautsch}{2009}]{bauhau2009}
Bauwens, L. and N.~Hautsch (2009).
\newblock Modelling financial high frequency data using point processes.
\newblock In T.~Mikosch, J.-P. Krei{\ss}, R.~A. Davis, and T.~G. Andersen
  (Eds.), {\em Handbook of Financial Time Series}, pp.\  953--979. Berlin,
  Heidelberg: Springer.

\bibitem[\protect\citeauthoryear{Bebbington}{Bebbington}{2013}]{bebb2013}
Bebbington, M. (2013).
\newblock Models for temporal volcanic hazard.
\newblock {\em Stat. Volcanol.\/}~{\em 1}, 1--24.

\bibitem[\protect\citeauthoryear{Bebbington and Lai}{Bebbington and
  Lai}{1996}]{beblai1996}
Bebbington, M. and C.-D. Lai (1996).
\newblock On nonhomogeneous models for volcanic eruptions.
\newblock {\em Math. Geol.\/}~{\em 28\/}(5), 585--600.

\bibitem[\protect\citeauthoryear{B\"{u}cher and Kojadinovic}{B\"{u}cher and
  Kojadinovic}{2019}]{buekoj2019}
B\"{u}cher, A. and I.~Kojadinovic (2019).
\newblock A note on conditional versus joint unconditional weak convergence in
  bootstrap consistency results.
\newblock {\em J. Theoret. Probab.\/}~{\em 32\/}(3), 1145--1165.

\bibitem[\protect\citeauthoryear{Burkitt}{Burkitt}{2006}]{burk2006}
Burkitt, A.~N. (2006).
\newblock A review of the integrate-and-fire neuron model: Ii. inhomogeneous
  synaptic input and network properties.
\newblock {\em Biol. Cybern.\/}~{\em 95}, 97--112.

\bibitem[\protect\citeauthoryear{Cavaliere, Lu, Rahbek, and
  St\ae{}rk-\O~{}stergaard}{Cavaliere et~al.}{2023}]{cavetal2023}
Cavaliere, G., Y.~Lu, A.~Rahbek, and J.~St\ae{}rk-\O~{}stergaard (2023).
\newblock Bootstrap inference for {H}awkes and general point processes.
\newblock {\em J. Econometrics\/}~{\em 235\/}(1), 133--165.

\bibitem[\protect\citeauthoryear{Chen and Hall}{Chen and
  Hall}{2013}]{chehal2013}
Chen, F. and P.~Hall (2013).
\newblock Inference for a nonstationary self-exciting point process with an
  application in ultra-high frequency financial data modeling.
\newblock {\em J. Appl. Probab.\/}~{\em 50\/}(4), 1006--1024.

\bibitem[\protect\citeauthoryear{Chornoboy, Schramm, and Karr}{Chornoboy
  et~al.}{1988}]{choetal1988}
Chornoboy, E.~S., L.~P. Schramm, and A.~F. Karr (1988).
\newblock Maximum likelihood identification of neural point process systems.
\newblock {\em Biol. Cybernet.\/}~{\em 59\/}(4-5), 265--275.

\bibitem[\protect\citeauthoryear{Cooper, Swindles, Savov, Schmidt, and
  Bacon}{Cooper et~al.}{2018}]{Cooper2018}
Cooper, C.~L., G.~T. Swindles, I.~P. Savov, A.~Schmidt, and K.~L. Bacon (2018,
  feb).
\newblock Evaluating the relationship between climate change and volcanism.
\newblock {\em Earth-Science Reviews\/}~{\em 177}, 238--247.

\bibitem[\protect\citeauthoryear{Darling}{Darling}{1955}]{darl1955}
Darling, D.~A. (1955).
\newblock The {C}ram\'{e}r-{S}mirnov test in the parametric case.
\newblock {\em Ann. Math. Statist.\/}~{\em 26}, 1--20.

\bibitem[\protect\citeauthoryear{Dion-Blanc, Lebarbier, and Robin}{Dion-Blanc
  et~al.}{2023}]{dioetal2023}
Dion-Blanc, C., E.~Lebarbier, and S.~Robin (2023).
\newblock Multiple change-point detection for {P}oisson processes.
\newblock {\em Preprint, \url{https://arxiv.org/pdf/2302.09103.pdf}\/}.

\bibitem[\protect\citeauthoryear{Durbin}{Durbin}{1973}]{durb1973}
Durbin, J. (1973).
\newblock Weak convergence of the sample distribution function when parameters
  are estimated.
\newblock {\em Ann. Statist.\/}~{\em 1}, 279--290.

\bibitem[\protect\citeauthoryear{Jacod and Protter}{Jacod and
  Protter}{2012}]{jacpro2012}
Jacod, J. and P.~Protter (2012).
\newblock {\em Discretization of processes}, Volume~67 of {\em Stochastic
  Modelling and Applied Probability}.
\newblock Springer, Heidelberg.

\bibitem[\protect\citeauthoryear{Jellinek, Manga, and Saar}{Jellinek
  et~al.}{2004}]{Jellinek2004}
Jellinek, A.~M., M.~Manga, and M.~O. Saar (2004, sep).
\newblock Did melting glaciers cause volcanic eruptions in eastern california?
  probing the mechanics of dike formation.
\newblock {\em Journal of Geophysical Research: Solid Earth\/}~{\em 109\/}(B9),
  n/a--n/a.

\bibitem[\protect\citeauthoryear{Jogesh~Babu and Rao}{Jogesh~Babu and
  Rao}{2004}]{jograo2004}
Jogesh~Babu, G. and C.~R. Rao (2004).
\newblock Goodness-of-fit tests when parameters are estimated.
\newblock {\em Sankhy\={a}\/}~{\em 66\/}(1), 63--74.

\bibitem[\protect\citeauthoryear{Kac, Kiefer, and Wolfowitz}{Kac
  et~al.}{1955}]{kacetal1955}
Kac, M., J.~Kiefer, and J.~Wolfowitz (1955).
\newblock On tests of normality and other tests of goodness of fit based on
  distance methods.
\newblock {\em Ann. Math. Statist.\/}~{\em 26}, 189--211.

\bibitem[\protect\citeauthoryear{Kutterolf, Jegen, Mitrovica, Kwasnitschka,
  Freundt, and Huybers}{Kutterolf et~al.}{2013}]{Kutterolf2013}
Kutterolf, S., M.~Jegen, J.~X. Mitrovica, T.~Kwasnitschka, A.~Freundt, and
  P.~J. Huybers (2013, feb).
\newblock A detection of milankovitch frequencies in global volcanic activity.
\newblock {\em Geology\/}~{\em 41\/}(2), 227--230.

\bibitem[\protect\citeauthoryear{Lilliefors}{Lilliefors}{1967}]{lill1967}
Lilliefors, H.~W. (1967).
\newblock On the {K}olmogorov-{S}mirnov test for normality with mean and
  variance unknown.
\newblock {\em J. Amer. Statist. Assoc.\/}~{\em 62\/}(318), 399--402.

\bibitem[\protect\citeauthoryear{Meintanis and Swanepoel}{Meintanis and
  Swanepoel}{2007}]{meiswa2007}
Meintanis, S. and J.~Swanepoel (2007).
\newblock Bootstrap goodness-of-fit tests with estimated parameters based on
  empirical transforms.
\newblock {\em Statist. Probab. Lett.\/}~{\em 77\/}(10), 1004--1013.

\bibitem[\protect\citeauthoryear{Ogata}{Ogata}{1978}]{ogat1978}
Ogata, Y. (1978).
\newblock The asymptotic behaviour of maximum likelihood estimators for
  stationary point processes.
\newblock {\em Ann. Inst. Statist. Math.\/}~{\em 30\/}(2), 243--261.

\bibitem[\protect\citeauthoryear{Ogata}{Ogata}{1988}]{ogat1988}
Ogata, Y. (1988).
\newblock Statistical models for earthquake occurrences and residual analysis
  for point processes.
\newblock {\em J. Amer. Statist. Assoc.\/}~{\em 83\/}(401), 9--27.

\bibitem[\protect\citeauthoryear{Pinto, Chahed, and Altman}{Pinto
  et~al.}{2015}]{pinetal2015}
Pinto, J. C.~L., T.~Chahed, and E.~Altman (2015).
\newblock Trend detection in social networks using hawkes processes.
\newblock In {\em Proceedings of the 2015 IEEE/ACM International Conference on
  Advances in Social Networks Analysis and Mining 2015}, ASONAM '15, New York,
  NY, USA, pp.\  1441–1448. Association for Computing Machinery.

\bibitem[\protect\citeauthoryear{Rampino, Self, and Fairbridge}{Rampino
  et~al.}{1979}]{Rampino1979}
Rampino, M.~R., S.~Self, and R.~W. Fairbridge (1979, nov).
\newblock Can rapid climatic change cause volcanic eruptions?
\newblock {\em Science\/}~{\em 206\/}(4420), 826--829.

\bibitem[\protect\citeauthoryear{Rizoiu, Lee, Mishra, and Xie}{Rizoiu
  et~al.}{2017}]{rizetal2017}
Rizoiu, M.-A., Y.~Lee, S.~Mishra, and L.~Xie (2017).
\newblock A tutorial on hawkes processes for events in social media.
\newblock {\em ArXiv\/}~{\em abs/1708.06401}.

\bibitem[\protect\citeauthoryear{Robock}{Robock}{2000}]{Robock2000}
Robock, A. (2000, may).
\newblock Volcanic eruptions and climate.
\newblock {\em Reviews of Geophysics\/}~{\em 38\/}(2), 191--219.

\bibitem[\protect\citeauthoryear{Selva, Sandri, Taroni, Sulpizio, Tierz, and
  Costa}{Selva et~al.}{2022}]{Selva2022}
Selva, J., L.~Sandri, M.~Taroni, R.~Sulpizio, P.~Tierz, and A.~Costa (2022,
  nov).
\newblock A simple two-state model interprets temporal modulations in eruptive
  activity and enhances multivolcano hazard quantification.
\newblock {\em Science Advances\/}~{\em 8\/}(44).

\bibitem[\protect\citeauthoryear{van~der Vaart}{van~der Vaart}{1998}]{vand1998}
van~der Vaart, A.~W. (1998).
\newblock {\em Asymptotic statistics}, Volume~3 of {\em Cambridge Series in
  Statistical and Probabilistic Mathematics}.
\newblock Cambridge University Press, Cambridge.

\bibitem[\protect\citeauthoryear{van~der Vaart and Wellner}{van~der Vaart and
  Wellner}{1996}]{vanwel1996}
van~der Vaart, A.~W. and J.~A. Wellner (1996).
\newblock {\em Weak convergence and empirical processes}.
\newblock Springer Series in Statistics. Springer-Verlag, New York.
\newblock With applications to statistics.

\bibitem[\protect\citeauthoryear{Vere-Jones}{Vere-Jones}{1982}]{vere1982}
Vere-Jones, D. (1982).
\newblock On the estimation of frequency in point-process data.
\newblock {\em J. Appl. Probab.\/}~{\em 19\/}(A), 383–394.

\end{thebibliography}

\vskip .65cm
\noindent
Christian-Albrechts-Universit\"at zu Kiel, Mathematisches Seminar\newline
GEOMAR Helmholtz Centre for Ocean Research
\vskip 2pt
\noindent
E-mail: jkling@geomar.de
\vskip 2pt

\noindent
Christian-Albrechts-Universit\"at zu Kiel, Mathematisches Seminar
\vskip 2pt
\noindent
E-mail: vetter@math.uni-kiel.de

\end{document}